              \def\version{November 21, 2025}	        	%
\numberwithin{equation}{section}
\def\d{{\rm d}} 
\def\e{\varepsilon} 
\font\tenBbb=msbm10 
\font\sevenBbb=msbm7 
\font\fiveBbb=msbm5 
\def\2{\mathbf 2}
\def\barn1a{\bar n_{1a}}
\def\tilden3{\widetilde n_3}
\newcommand{\R}     {\mathbb{R}} 
\newcommand{\N}     {\mathbb{N}} 
\renewcommand{\P}   {\mathbb{P}} 
\newcommand{\E}     {\mathbb{E}}
\newcommand{\smfrac}[2]{\textstyle{\frac {#1}{#2}}}
\newcommand{\wt}{\widetilde}
\def\1{{\mathchoice {1\mskip-4mu\mathrm l}      % Blackboard bold 1 
{1\mskip-4mu\mathrm l} 
{1\mskip-4.5mu\mathrm l} {1\mskip-5mu\mathrm l}}} 
\def\comment#1{} 
\newtheoremstyle{thm}{2ex}{2ex}{\itshape\rmfamily}{} 
{\bfseries\rmfamily}{}{1.7ex}{} 
\newtheoremstyle{rem}{1.3ex}{1.3ex}{\rmfamily}{} 
{\itshape\rmfamily}{}{1.5ex}{}
\renewcommand{\theequation}{\thesection.\arabic{equation}} 
\newtheorem{theorem}{Theorem}[section] 
\newtheorem{lemma}[theorem]{Lemma} 
\newtheorem{prop}[theorem] {Proposition}
\newtheorem{conj}[theorem] {Conjecture}
\theoremstyle{definition}
\newtheorem{remark}[theorem]{Remark}
\renewcommand{\d}{{\rm d}} 
\newcommand{\eps}{\varepsilon} 
\newcommand{\ups}{\upsilon}
\newcommand{\Bcal}  {{\mathcal B}}
\newcommand{\Lcal}   {{\mathcal L }}
\newcommand\numberthis{\addtocounter{equation}{1}\tag{\theequation}}
\renewcommand{\e}   {{\operatorname e }}
\definecolor{Red}{rgb}{1,0,0}
\pgfplotsset{compat=1.18}
\definecolor{gb}{rgb}{0.0, 0.36, 0.56}
\newcommand{\bl}{\color{black}}
\definecolor{lightgreen}{rgb}{0.8, 0.9, 0.1}
\definecolor{darkgreen}{rgb}{0.6, 0.8, 0.3}
\definecolor{red1}{rgb}{0.9, 0, 0.1}
\definecolor{purple1}{rgb}{0.5, 0, 0.6}
\definecolor{blue1}{rgb}{0.4, 0.6, 0.99}
\definecolor{orange1}{rgb}{0.9, 0.7, 0}
\newcommand{\oneC}{\mbox{$(\rm{Coex}_{1,3})$\bl}}
\newcommand{\twoC}{\mbox{$(\rm{Coex}_{2,3})$\bl}}
\newcommand{\oneItwo}{\mbox{$(\rm{Inv}_{1 \rightarrow 2,3})$\bl}}
\newcommand{\twoIone}{\mbox{$(\rm{Inv}_{2 \rightarrow 1,3})$\bl}}
\newcommand{\oneNItwo}{\mbox{$(\rm{Inv}_{1 \nrightarrow 2,3})$\bl}}
\newcommand{\twoNIone}{\mbox{$(\rm{Inv}_{2 \nrightarrow 1,3})$\bl}}
\begin{document} 
 
\title[Emergence of \bl microbial \bl host dormancy]{Emergence of \bl microbial \bl host dormancy \\ \bl during \bl a persistent virus epidemic}
\author[Jochen Blath and András Tóbiás]{}
\maketitle
\thispagestyle{empty}
\vspace{-0.5cm}

\centerline{\sc Jochen Blath{\footnote{Goethe-Universität Frankfurt, Institute of Mathematics, and $\mathrm{C^3S}$ -- Center for Critical Computational Studies, Robert-Mayer-Straße 10, 60325 Frankfurt am Main, Germany, {\tt blath@math.uni-frankfurt.de}}} and András Tóbiás{\footnote{\emph{Corresponding author.} Department of Computer Science and Information Theory, Faculty of Electrical Engineering and Informatics, Budapest University of Technology and Economics, Műegyetem rkp. 3., H-1111 Budapest, Hungary, and HUN-REN Alfréd Rényi Institute of Mathematics, Reáltanoda utca 13--15, H-1053 Budapest, Hungary,  {\tt tobias@cs.bme.hu}}}}
\renewcommand{\thefootnote}{}
%\vspace{0.5cm}
%\centerline{\textit{TU Berlin}}

\bigskip

\centerline{\small(\version)} 
\vspace{.5cm} 
 
\begin{quote} 
{\small {\bf Abstract:}} We study a minimal stochastic individual-based model for a microbial population challenged by a persistent (lytic) virus epidemic. We focus on the situation in which the resident microbial host population and the virus population are in stable coexistence upon arrival of a single new ``mutant'' host individual. We assume that this mutant is capable of switching to a reversible state of dormancy upon contact with  virions as a means of avoiding infection by the virus. At the same time, we assume that this new dormancy trait comes with a cost, namely a reduced individual reproduction rate. We prove that there is a non-trivial range of parameters where the mutants can nevertheless invade the resident population with with strictly positive probability (bounded away from 0) in the large population limit. Given the reduced reproductive rate, such an invasion would be impossible in the absence of either the dormancy trait or the virus epidemic. We explicitly characterize the parameter regime where this {\em emergence of a %(costly) 
host dormancy trait} is possible, determine the success probability of a single invader and the typical amount of time it takes the successful mutants to reach a macroscopic population size. 
We conclude this study by an investigation of the fate of the population after the successful emergence of a dormancy trait. Heuristic arguments and simulations suggest that after successful invasion, either both host types and the virus will reach coexistence, or the mutants will drive the resident hosts to extinction while the virus will stay in the system.  
\end{quote}

\bigskip\noindent 
{\it MSC 2010.} 92D25, 60J85, 34D05, 37G15. 

\medskip\noindent
{\it Keywords and phrases.} Dormancy, host--virus system,  virus epidemic, invasion, coexistence, founder control. %\color{red} Table of contents at the end. \color{black}

\setcounter{tocdepth}{3}

\setcounter{section}{0}
\begin{comment}{
This is not visible.}
\end{comment}

\section{Introduction}

\subsection{Motivation and background}
 The notion of dormancy describes a class of strategies  -- employed in one form or another by many species -- to withstand unfavorable or stressful conditions by transitioning into a protected and reversible state of reduced metabolic activity. Having evolved numerous times throughout the tree of life \cite{LdHWB21}, potentially already \bl very early in life's history \cite{WL25}, dormancy is now ubiquitous in particular \bl in microbial communities, where it contributes to the resilience, coexistence, and diversity of populations. However, a dormancy trait typically comes with additional costs \cite{LJ11}, and the question  under which conditions it is advantageous \bl has attracted some mathematical interest in recent years, see e.g.\ \cite{MS08,DMB11,BT19,BHS21}. 

 For microbial host--virus \bl systems, {\em host dormancy} has been suggested and described as an effective defense mechanism against virus infections,  see e.g.~\bl  \cite{B15,GW15,GW18,JF19,MNM19}. In particular,  host dormancy \bl has been shown to be able to stabilize populations challenged by a persistent virus epidemic \cite{BT21}. However, the question how such a costly dormancy trait can emerge in a host population lacking this trait has not been treated in these models.
 
 %In the absence of external variation/stress, common knowledge in ecology/adaptive dynamics suggests that reduced reproductive rates should in general prevent invasion and subsequent fixation/coexistence of a new trait in a resident population. Recently, it has been shown that this is not necessarily the case for the emergence of so-called competition-induced dormancy, despite increased energy costs (see \cite{BT20}), where a dormancy trait can emerge and even fixate. Roughly speaking, this is true if the dormancy-induced competition tolerance exceeds the effects of initial inter-species competition.
 
 The starting point for the present study thus is to provide a minimal individual based model for the mechanistic explanation of the emergence of a host-dormancy trait in the presence of a stable host-virus equilibrium. Based on this micro-model, we aim to carry out a stochastic and subsequently  deterministic \bl invasion analysis, consisting of the following two phases: 

\begin{itemize}
\item {\em Phase I: Arrival and potential invasion of the  new dormancy \bl trait -- stochastic phase.} Suppose we start with a single mutant invader (coming with the new dormancy trait) in a stable host--virus population. During an initial phase, which either leads to the extinction of the invader  population \bl or its growth to a  ``macroscopic'' \bl scale, the invader will be described by a stochastic birth-death process  with (competitive) interactions \bl in order to account for  random reproductive \bl fluctuations. The costs of dormancy will be  incorporated \bl by a reduced birth rate in comparison to \bl the resident host population.\smallskip
\item {\em Phase II: Macroscopic dynamics after emergence of the new trait -- deterministic phase}. Once on the macroscopic scale (that is, on the order of the  initial \bl host population size,  given by its \bl carrying capacity in equilibrium), the population dynamics of the whole system  (resident, invader and virus population) \bl can be  properly \bl approximated by a  multi-\bl dimensional, nonlinear system of ODEs. The population can  then \bl be expected to enter either fixation, extinction or coexistence regimes.
\end{itemize}

Our main goal will be to identify  non-trivial \bl parameter regimes, characterized by the trade-off between reduced reproductive rate and dormancy initiation capability of the mutant, \bl in which the invasion and subsequent emergence on the macroscopic scale of  the \bl new dormancy trait is possible, and to determine the probability of such a successful \bl invasion event. We are also interested in the time it takes for the dormancy trait to become macroscopic, and in its long-term fate (fixation, extinction or coexistence).

\begin{remark}
    As indicated by the above two-step approach, this paper follows the set-up of {\em stochastic} adaptive dynamics, as employed e.g.\ in \cite{C06}, and is in some aspects also similar to stochastic individual-based models in epidemiology. Indeed, our invasion analysis begins with a stochastic micro-model for the early invasion phase, followed by an ODE-based model for the ecological dynamics once the invader has established itself. ``Classical'' invasion analysis, based on the system of ODEs alone, would already yield important (but not all) parts of our results, in particular the form of the dormancy emergence condition. In order to make the paper accessible and interesting to readers from different backgrounds, we will subsequently comment on the classical invasion analysis, see in particular Section~\ref{sec:dyn_system_results}. 
\end{remark}

\subsection{The stochastic individual-based model for phase I} 
We begin with a minimal model at the individual (``microscopic'') \bl  level describing the reproduction, competition, and infection dynamics of host cells and virions, before passing on to the large population limit that will lead to a dynamical system. 
%The reason for this is threefold:
%\begin{itemize}
%\item The minimal individual-based model yields a direct understanding of the involved model parameters by describing explicit interactions. The model can be efficiently analysed with the help of interacting birth-death and multi-type branching processes.
%\item The micro-model will then allow us to prove explicit conditions for the emergence of an invading dormancy trait up to the population level scale. Parameters on the micro-level will then be interpretable on the macro-level.
%\item Once all sub-populations are on a macroscopic scale, the micro-model allows us to derive the large population limit via a functional law of large numbers, leading to a  deterministic dynamical system. Note that this limit by construction cannot give information about the probability of a successful invasion in the initial stochastic phase.
%\end{itemize}

 Our \bl  mechanistic micro-model considers three types of individuals.
Type 1 refers to the resident microbial hosts (without dormancy trait), type 2 to the new mutant host type, now with dormancy trait,  and, finally, \bl type 3 refers  to \bl  the extra-cellular free virus particles (virions). For types 1 and 2 we distinguish several sub-types corresponding to different states of the individuals: Type 1 individuals can be either active or infected,
%We call the active and infected type 1 individuals 
 denoted by \bl type 1a and 1i, respectively. Type 2 comes with three sub-types.  We denote the active, dormant, and infected type 2 individuals by type 2a, 2d, and 2i, respectively. Our type space is thus $\mathcal T:=\{ 1a,1i,2a,2d,2i,3\}$. 

\smallskip

The {\bf population model} is then given by a continuous-time Markov chain $\mathbf N=(\mathbf N(t))_{t\geq 0}$ with values in $\N_0^6$ recording the number of particles of the respective types at each time $t\ge 0$:
$$
\mathbf N=(\mathbf N(t))_{t\geq 0} = (N_{1a}(t),N_{1i}(t),N_{2a}(t),N_{2i}(t),N_3(t))_{t\geq 0}.
$$ 
We use the abbreviations $N_1(t)=N_{1a}(t)+N_{1i}(t)$ for the total type 1 population size and $N_2(t)=N_{2a}(t)+N_{2d}(t)+N_{2i}(t)$ for the total type 2 population size at time $t \geq 0$. 

\smallskip

The {\bf transition rates} are given as follows  (note that in Section~\ref{ssn-modeling-choices}, \bl we discuss  some of \bl our modelling choices and related prior work). 
%For convenience we also provide illustrations, where, in particular, the symbol ``$\dagger$'' represents the death of an individual/virus:

%With a slight abuse of nomenclature, we will sometimes refer to the virions as ``type 3 individuals''.)

\tikzstyle{1a}=[circle,draw=blue!90,fill=blue!20,thick,minimum size=5mm]
\tikzstyle{1i}=[circle,draw=blue!90,fill=purple!10,thick,minimum size=5mm]
\tikzstyle{2a}=[circle,draw=green!90,fill=green!20,thick,minimum size=5mm]
\tikzstyle{2d}=[circle,draw=green!50,fill=green!2,thick,minimum size=5mm]
\tikzstyle{2i}=[circle,draw=green!90,fill=purple!10,thick,minimum size=5mm]
\tikzstyle{D}=[rectangle,draw=black!50,fill=white!20,thick,minimum size=5mm]
\tikzstyle{3}=[circle,draw=purple!50,fill=purple!20,thick,minimum size=2mm]
\tikzstyle{X}=[circle,draw=black!20,fill=black!10,thick,minimum size=2mm]

%\tikzstyle{1a}=[circle,draw=black!50,fill=black!20,thick,minimum size=5mm]
%\tikzstyle{1i}=[circle,draw=blue!50,fill=blue!20,thick,minimum size=5mm]
%\tikzstyle{2a}=[circle,draw=green!50,fill=green!20,thick,minimum size=5mm]
%\tikzstyle{2d}=[circle,draw=yellow!50,fill=yellow!20,thick,minimum size=5mm]
%\tikzstyle{2i}=[circle,draw=orange!50,fill=orange!20,thick,minimum size=5mm]
%\tikzstyle{D}=[rectangle,draw=black!50,fill=white!20,thick,minimum size=5mm]
%\tikzstyle{3}=[circle,draw=purple!50,fill=purple!20,thick,minimum size=2mm]
%\tikzstyle{X}=[circle,draw=blue!20,fill=blue!10,thick,minimum size=2mm]

\medskip

\begin{enumerate}[(I)]

\item {\em Reproduction and natural death}. Type 1a individuals {\em give birth} to a new 1a individual at rate $\lambda_1>0$. Type 2a individuals do this at a reduced rate $\lambda_2 \in (0,\lambda_1)$. Both types of individuals {\em die naturally} at rate $\mu_1 \in (0,\lambda_2)$, so that both populations are fit when they are on their own. \bl
%\item A type 1a individual {\em reproduces} at rate $\lambda_1>0$, giving birth to another such individual.
%\item A type 2a individual {\em reproduces} at rate $\lambda_2 \in (0,\lambda_1)$, giving birth to another such individual. 
%\item A type 1a or 2a individual {\em dies} \gb naturally at rate \bl $\mu_1 \in (0,\lambda_2)$.

\medskip

\hspace{2pt}
\begin{center}
\noindent 
\begin{tikzpicture}[node distance=2cm, semithick, shorten >=5pt, shorten <=5pt]
  \node  [1a] 	(1a_rep)	 {$1a$};
  \node  [1a] 	(1a_child_1)	[right of=1a_rep, yshift=1cm]	 {$1a$};    
  \node  [1a] 	(1a_child_2) 	[right of=1a_child_1, xshift=-1cm] 	{$1a$};
  \node  [D] 		(death) 	[below of=1a_child_1] 	{$\dagger$}; 
  	 \draw[->] (1a_rep) to node[auto, swap] {$\lambda_1$} (1a_child_1);
 	 \draw[->] (1a_rep) to node[auto, swap] {$\mu_1$} (death);  
\end{tikzpicture} \phantom{AAAAA}
\begin{tikzpicture}[node distance=2cm, semithick, shorten >=5pt, shorten <=5pt]
  \node  [2a] 	(1a_rep)	 {$2a$};
  \node  [2a] 	(1a_child_1)	[right of=1a_rep, yshift=1cm]	 {$2a$};    
  \node  [2a] 	(1a_child_2) 	[right of=1a_child_1, xshift=-1cm] 	{$2a$};
  \node  [D] 		(death) 	[below of=1a_child_1] 	{$\dagger$}; 
  	 \draw[->] (1a_rep) to node[auto, swap] {$\lambda_2$} (1a_child_1);
 	 \draw[->] (1a_rep) to node[auto, swap] {$\mu_1$} (death);  
\end{tikzpicture}
\end{center}
%\item $K>0$ is the carrying capacity of the population.

\medskip

\item {\em Competition.} %Individuals {\em die due to competition} as follows. 
Let $C>0$ denote the overall competition strength and $K>0$ the carrying capacity of the population. For any pair of individuals for which the first one is of type 1a or 2a and the second one  is \bl from $\{ 1a,1i,2a,2d,2i\}$, death by competition occurs at rate $C/K>0$, leading to the  removal \bl  of the first (active) individual. 

\medskip

\begin{center}
\noindent
\begin{tikzpicture}[node distance=2cm, semithick, shorten >=5pt, shorten <=5pt]
    \node  [1a] 	(2_comp_1)	 {$1a$};  
    \node  [X]  	(2_comp_2)	 [right of=2_comp_1, xshift=-1cm]{$X$};   
    \node  [D] 		(death) 	 [right of=2_comp_2] 	{$\dagger$}; 
    \node  [X] 	    (2_child_1)	 [right of=death, xshift=-1cm]	 {$X$}; 
    \draw[->] (2_comp_2) to node[auto, swap] {$\frac CK$} (death);
\end{tikzpicture} \phantom{AAAAA}
\begin{tikzpicture}[node distance=2cm, semithick, shorten >=5pt, shorten <=5pt]
    \node  [2a] 	(2_comp_1)	 {$2a$};  
    \node  [X]  	(2_comp_2)	 [right of=2_comp_1, xshift=-1cm]{$X$};   
    \node  [D] 		(death) 	 [right of=2_comp_2] 	{$\dagger$}; 
    \node  [X] 	    (2_child_1)	 [right of=death, xshift=-1cm]	 {$X$}; 
    \draw[->] (2_comp_2) to node[auto, swap] {$\frac CK$} (death);
\end{tikzpicture}
$$
\phantom{AAAA}X\in \big\{1a, 1i, 2a, 2i, 2d \big\}.
$$
\end{center}

\medskip
 
\item  {\em Virus contact followed by infection or dormancy.} Let $D>0$.
%\footnote{\tt Call that $V$ instead of $D$ throughout? {\color{red} We could do it, but isn't it better to keep the notation of~\cite{BT21}?}}. 
Then, for any pair of individuals consisting of a host-type (1a or 2a) and a virion (type 3), a {\em virus contact} occurs at rate $D/K$.  Upon virus contact, we distinguish two cases:
\medskip
\begin{itemize}
    \item If the affected host individual is of type 1a, due to its lack of a dormancy trait, it always {\em becomes infected},  that is, the type 3 individual enters the now infected cell,   producing a new type 1i particle. \bl 
    \medskip
    \item If the affected host individual is of type 2a, then the host has a chance to {\em escape the infection  by entering \bl dormancy}. Indeed, let $q \in (0,1)$. With probability $q$, the affected type 2a host individual becomes infected (i.e.\ switches to type 2i), and the type 3 virion enters the cell. However, with probability $1-q$, the affected type 2a host individual becomes dormant and the virion is repelled. 
\end{itemize}
\medskip

%Upon the virus contact, if the affected host individual is of type 1a, it becomes infected (type 1i) and the type 3 individual enters the now infected cell, later possibly producing further virus particles. 
 
%However, if the affected host individual is of type 2a, then we toss a biased coin that is independent of the prehistory of the process and turns out heads with probability $q \in (0,1)$.
%\begin{itemize}
%    \item If the result is tails (i.e.\ with probability $1-q$), the affected host individual (of type 2a) becomes infected (type 2i) and the type 3 individual is removed,
%    \item whereas if the result is heads (i.e.\ with probability $q$), the affected host individual (of type 2a) becomes dormant (type 2d) and the type 3 individual stays put.
%\end{itemize}

\smallskip
\begin{center}
\begin{tikzpicture}[node distance=2cm, semithick, shorten >=5pt, shorten <=5pt]
  \node  [1a] 	(1a_rep)	 {$1a$};
  \node  [3] 	(virus)	[right of=1a_rep, xshift=-1.2cm]	 {{\tiny $3$}};    
  \node  [1i] 	(infected)	[right of=virus,xshift=1cm]	 {$1i$};      
  	 \draw[->] (virus) to node[auto, swap] {\small $\frac{D}{K}$} (infected);
 \end{tikzpicture}
 \hspace{30pt} 
 \begin{tikzpicture}[node distance=2cm, semithick, shorten >=5pt, shorten <=5pt]
  \node  [2a] 	(1a_rep)	 {$2a$};
  \node  [3] 	(virus)	[right of=1a_rep, xshift=-1.2cm]	 {{\tiny $3$}};    
  \node  [2i] 	(infected)	[right of=virus,yshift=1cm, xshift=1cm]	 {$2i$};      
  	 \draw[->] (virus) to node[auto, swap] {\small $\frac{(1-q)D}{K}$} (infected);
  \node  [2d] 	(virus_2)	[below of=infected]	 {$2d$};    
  \node  [3] 		(dormant) 	[right of=virus_2, xshift=-1.2cm] 	{{\tiny $3$}}; 
  	 \draw[->] (virus) to node[auto, swap] {\small $\frac{qD}{K}$} (virus_2);	 
 \end{tikzpicture}
\end{center}
 \smallskip
 
\item {\em Consequences of infection.} 
\medskip
\begin{itemize}
    \item Type 1i (resp.\ 2i) individuals {\em recover from infection}, switching back to type 1a (resp.\ 2a) at rate $r>0$. %$r\color{green} >  \color{black}0$. 
    \medskip
    \item Infected (type 1i or 2i) individuals {\em die from infection via lysis} at rate $v>0$, leading to the {\em instantaneous release} of $m\in\N$ new type 3 individuals (free virions). The number $m$ is called the \emph{burst size}.
\end{itemize}
%Type 1i (resp.\ 2i) individuals {\em recover} from infection, switching back to type 1a (resp.\ 2a) at rate $r\geq 0$. 
%Infected (type 1i or 2i) individuals die by lysis at rate $v>0$, which leads to the instantaneous creation of $m\in\N$ type 3 individuals. The number $m$ is called the \emph{burst size}.

\smallskip
\begin{center}
\noindent
      \begin{tikzpicture}[node distance=2cm, ,semithick, shorten >=5pt, shorten <=5pt]
  \node  [1i]  	(infected)	 {$1i$};  
  \node  [1a] 	(recovered)	[right of=infected,yshift=1cm, xshift=0cm]	 {$1a$};    
  	 \draw[->] (infected) to node[auto, swap] {$r$} (recovered);
  \node  [D] 	 (death)	[below of=recovered]	 {$\dagger$};    
  \node  [3] 		(virus_1) 	[right of=death, xshift=-1cm,  label=below:$1$] {{\tiny $3$}}; 
  	 \draw[->] (infected) to node[auto, swap] {$v$} (death);	 
  \node  [3] 		(virus_2) 	[right of=virus_1, xshift=-1.3cm,  label=below:$2$, label=right:$\quad\cdots$] 	{{\tiny $3$}}; 
  %\node  [3] 		(virus_3) 	[right of=virus_2, xshift=-0.3cm] 	{{\tiny $3$}}; 
  %\node  [3] 		(virus_4) 	[right of=virus_3, xshift=-1.3cm] 	{{\tiny $3$}}; 
  \node  [3] 		(virus_m) 	[right of=virus_2, xshift=-0.3cm,  label=below:$m$] 	{{\tiny $3$}}; 
 \end{tikzpicture}
  \hspace{30pt}
  \begin{tikzpicture}[node distance=2cm, ,semithick, shorten >=5pt, shorten <=5pt]
  \node  [2i]  	(infected)	 {$2i$};  
  \node  [2a] 	(recovered)	[right of=infected,yshift=1cm, xshift=0cm]	 {$2a$};    
  	 \draw[->] (infected) to node[auto, swap] {$r$} (recovered);
  \node  [D] 	 (death)	[below of=recovered]	 {$\dagger$};    
  \node  [3] 		(virus_1) 	[right of=death, xshift=-1cm,  label=below:$1$] {{\tiny $3$}}; 
  	 \draw[->] (infected) to node[auto, swap] {$v$} (death);	 
  \node  [3] 		(virus_2) 	[right of=virus_1, xshift=-1.3cm,  label=below:$2$, label=right:$\quad\cdots$] 	{{\tiny $3$}}; 
  %\node  [3] 		(virus_3) 	[right of=virus_2, xshift=-0.3cm] 	{{\tiny $3$}}; 
  %\node  [3] 		(virus_4) 	[right of=virus_3, xshift=-1.3cm] 	{{\tiny $3$}}; 
  \node  [3] 		(virus_m) 	[right of=virus_2, xshift=-0.3cm,  label=below:$m$] 	{{\tiny $3$}}; 
 \end{tikzpicture}
\end{center}
\smallskip

\item {\em Exit from dormancy.} 
\medskip
\begin{itemize}
    \item Dormant (type 2d) individuals {\em resuscitate}, switching back to the active state 2a, at rate $\sigma>0$.\medskip
    \item At rate $\kappa\mu_1$, where $\kappa \geq 0$, dormant (type 2d) individuals  {\em die naturally}. \bl Typically, $\kappa<1$ to reflect the fact that death rates during dormancy should be lower than during activity.
\end{itemize}

\medskip
\begin{center}
\noindent
  \begin{tikzpicture}[node distance=2cm, semithick, shorten >=5pt, shorten <=5pt]
  \node  [2d] (dormant)	 {$2d$};  
  \node  [2a] (active)	 [right of=dormant, yshift=1cm]{$2a$};
  \node  [D] 	(death)	[below of=active]	 {$\dagger$};    
  	 \draw[->] (dormant) to node[auto, swap] {$\sigma$} (active);
  	 \draw[->] (dormant) to node[auto, swap] {$\kappa \mu_1$} (death);	 
 \end{tikzpicture} 
\end{center}
\smallskip
\item {\em Degradation of virions.} Type 3 individuals cannot reproduce outside host-cells and they  {\em degrade} (are removed) \bl at rate $\mu_3>0$.
\medskip
\begin{center}
\noindent
  \begin{tikzpicture}[node distance=2cm, ,semithick, shorten >=5pt, shorten <=5pt]
\node  [3] 	(virus) [right of=recovered,xshift=5cm,yshift=-1cm]	{{\tiny $3$}}; % {\tiny  $2$};
  \node  [D] 		(death) 	[right of=virus] 	{$\dagger$}; 
 	 \draw[->] (virus) to node[auto, swap] {$\mu_3$} (death);  
 \end{tikzpicture}
\end{center}
\smallskip

\end{enumerate}

The above mechanisms characterize the dynamics of the continuous-time Markov chain $\mathbf N$. Indeed, the precise transitions are given as follows. If $\mathbf N$ is currently in a state $$\widehat {\mathbf n}=(\widehat n_{1a},\widehat n_{1i},\widehat n_{2a},\widehat n_{2d},\widehat n_{2i},\widehat n_3) \in\N_0^6,$$ then  its possible new states and jump rates are given as follows: \bl
%\footnote{\tt I am not sure if I prefer the design with an arrow and cases. We could use $n_1$ and $n_2$ for the total rates...}
\[ (\widehat n_{1a},\widehat n_{1i},\widehat n_{2a},\widehat n_{2d},\widehat n_{2i},\widehat n_3) \to \begin{cases}
    (\widehat n_{1a}+1,\widehat n_{1i},\widehat n_{2a},\widehat n_{2d},\widehat n_{2i},\widehat n_3) &\text{ at rate } \lambda_1 \widehat n_{1a}, \\
    (\widehat n_{1a}-1,\widehat n_{1i},\widehat n_{2a},\widehat n_{2d},\widehat n_{2i},\widehat n_3) &\text{ at rate } (\mu_1+C(\widehat n_{1}+\widehat n_{2}))\widehat n_{1a}, \\
    (\widehat n_{1a}-1,\widehat n_{1i}+1,\widehat n_{2a},\widehat n_{2d},\widehat n_{2i},\widehat n_3-1) &\text{ at rate} D\widehat n_{1a}\widehat n_3, \\
    (\widehat n_{1a}+1,\widehat n_{1i}-1,\widehat n_{2a},\widehat n_{2d},\widehat n_{2i},\widehat n_3) &\text{ at rate } r \widehat n_{1i}, \\
    (\widehat n_{1a},\widehat n_{1i}-1,\widehat n_{2a},\widehat n_{2d},\widehat n_{2i},\widehat n_3+m) &\text{ at rate } v\widehat n_{1i}, \\
     (\widehat n_{1a},\widehat n_{1i},\widehat n_{2a}+1,\widehat n_{2d},\widehat n_{2i},\widehat n_3) &\text{ at rate } \lambda_2 \widehat n_{2a}, \\
     (\widehat n_{1a},\widehat n_{1i},\widehat n_{2a}-1,\widehat n_{2d},\widehat n_{2i},\widehat n_3) &\text{ at rate } (\mu_1+C(\widehat n_{1}+\widehat n_{2}))\widehat n_{2a}, \\
     (\widehat n_{1a},\widehat n_{1i},\widehat n_{2a}-1,\widehat n_{2d},\widehat n_{2i}+1,\widehat n_3-1) &\text{ at rate } (1-q)D\widehat n_{1a}\widehat n_3, \\
     (\widehat n_{1a},\widehat n_{1i},\widehat n_{2a}-1,\widehat n_{2d}+1,\widehat n_{2i}+1,\widehat n_3)   &\text{ at rate } qD\widehat n_{1a}\widehat n_3, \\
       (\widehat n_{1a},\widehat n_{1i},\widehat n_{2a}+1,\widehat n_{2d},\widehat n_{2i}-1,\widehat n_3) &\text{ at rate } r \widehat n_{2i}, \\
    (\widehat n_{1a},\widehat n_{1i},\widehat n_{2a},\widehat n_{2d},\widehat n_{2i}-1,\widehat n_3+m) &\text{ at rate } v\widehat n_{2i}, \\
      (\widehat n_{1a}+1,\widehat n_{1i}-1,\widehat n_{2a},\widehat n_{2d},\widehat n_{2i},\widehat n_3) &\text{ at rate } r \widehat n_{1i}, \\
    (\widehat n_{1a},\widehat n_{1i},\widehat n_{2a}+1,\widehat n_{2d}-1,\widehat n_{2i},\widehat n_3) &\text{ at rate } \sigma \widehat n_{2d}, \\
    (\widehat n_{1a},\widehat n_{1i},\widehat n_{2a},\widehat n_{2d}-1,\widehat n_{2i},\widehat n_3) &\text{ at rate } \kappa\mu_1 \widehat n_{2d}, \\
    (\widehat n_{1a},\widehat n_{1i},\widehat n_{2a},\widehat n_{2d},\widehat n_{2i},\widehat n_3-1) &\text{ at rate } \mu_3 \widehat n_3,
\end{cases}
\]
where we abbreviated $\widehat n_1=\widehat n_{1a}+\widehat n_{1i}$ and $\widehat n_2=\widehat n_{2a}+\widehat n_{2d}+\widehat n_{2i}$.

\subsection{Large populations: The limiting dynamical system, its equilibria and special cases}

In order to obtain a large population limit, we employ the classical rescaling by the  {\em carrying capacity~$K$}. \bl Indeed, we consider,  for each $t \ge 0$, \bl
$$
\mathbf N^K(t)=\frac{1}{K} \mathbf N(t)
$$
and 
$$
N^K_{\upsilon}(t)=\frac{1}{K} N_\upsilon(t) \quad \mbox{ for each } \quad \upsilon \in \{1,1a,1i,2,2a,2d,2i,3\}.
$$ 
According to~a standard functional law of large numbers, see e.g.\ \cite[Theorem 11.2.1, p456]{EK}, on any fixed time interval of the form $[0,T]$, $(\mathbf N^K(t))_{t\in [0,T]}$  then \bl converges as $K\to\infty$ in probability w.r.t.\ the supremum norm to the unique solution 
$$
\mathbf n=(\mathbf n(t))_{t\in [0,T]}=(n_{1a}(t),n_{1i}(t),n_{2a}(t),n_{2d}(t),n_{2i}(t),n_{3}(t))_{t\in [0,T]}
$$
 of \bl the six-dimensional ODE system 
\begin{equation}\label{6dimvirus}
\begin{cases}   
\begin{aligned}
\dot n_{1a}(t)& = n_{1a}(t) (\lambda_1-\mu_1 - C(n_{1}(t)+n_{2}(t))-D n_3(t))+ r n_{1i}(t), \\
\dot n_{1i}(t) &= D n_{3}(t) n_{1a}(t) - (r+v) n_{1i}(t), \\
\dot n_{2a}(t) &= n_{2a}(t)(\lambda_2-\mu_1 - C(n_{1}(t)+n_{2}(t))-D n_3(t))+ r n_{2i}(t)+ \sigma n_{2d}(t), \\
\dot n_{2d}(t) & = q D n_{3}(t) n_{2a}(t) - (\kappa\mu_1+\sigma)n_{2d}(t), \\
\dot n_{2i}(t) & = (1-q) D n_{3}(t) n_{2a}(t) - (r+v) n_{2i}(t), \\
\dot n_{3}(t) & = -  D n_{3}(t) n_{1a}(t) - (1-q) D n_{3}(t) n_{2a}(t) + m v (n_{1i}(t)+n_{2i}(t)) - \mu_3 n_{3}(t),
\end{aligned}
\end{cases}
\end{equation}
given convergence of the initial conditions in probability, where we abbreviate $n_{1}(t)=n_{1a}(t)+n_{1i}(t)$ and $n_2(t)=n_{2a}(t)+n_{2d}(t)+n_{2i}(t)$. %This result is applicable in situations where all the six subpopulation sizes are of order $\Theta(K)$ (which is not the case when there are only a few type 2 individuals in the initial phase of their invasion).   
This system describes thelarge-population limit of our {\em host--virus system with invading host dormancy trait}. 

Although our main invasion result will be formulated for the rescaled Markov chain $\mathbf N^K$, its statement requires some information about the equilibria of certain sub-systems of \eqref{6dimvirus} which we shall now briefly discuss. 

\subsubsection{The basic Lotka-Volterra sub-system}
We begin with a very simple special case. Suppose that, initially, virions, dormant and infected forms are absent from the system, i.e.\ $n_{1i}(0)=n_{2d}(0)=n_{2i}(0)=n_3(0)$. Then \eqref{6dimvirus} reduces to a classical two-dimensional {\em Lotka--Volterra system}
\begin{equation}\label{2dimLV}
\begin{cases}   
\begin{aligned}
\dot n_{1a}(t)& = n_{1a}(t) (\lambda_1-\mu_1 - C(n_{1a}(t)+n_{2a}(t)), \\
\dot n_{2a}(t) &= n_{2a}(t) (\lambda_2-\mu_1 - C(n_{1a}(t)+n_{2}a(t)).
\end{aligned}
\end{cases}
\end{equation}
Recall that we assume that $0 < \mu_1 <\lambda_2<\lambda_1$, i.e.\ both types are fit.
%, but the ability of contact-mediated host dormancy comes with a reproductive trade-off.  
%\bl The natural mortality rate $\mu_1$ is the same for both host types for simplicity, and it is smaller than any of the two birth rates. 
%The latter assumption excludes rapid extinction of any of the host types in absence of the other hosts and viruses with high probability. In other words, 
Then, the following \emph{equilibrium population sizes}  $(\bar n_{1a},0)$ and $(0,\bar n_{2a})$ \bl are positive: \[ \bar n_{1a}:=\frac{\lambda_1-\mu_1}{C} \qquad \text{ and } \qquad \bar n_{2a}:=\frac{\lambda_2-\mu_1}{C} < \bar n_{1a}. \numberthis\label{barn1a}  \] 
Clearly, % $(0,0,0,0,0,0)$, 
$(\bar n_{1a},0,0,0,0,0)$, $(0,0,\bar n_{2a},0,0,0,0)$ are the corresponding equilibria of the full system.   Note that this system (with symmetric competition) does not allow for coexistence. \bl 

%\subsection{\gb Equilibria and relation to previously considered systems\bl}
%\gb Relation to previous work - {\tt Perhaps move!}\bl %\color{red} Good idea. I have restructured and shortened it, but we cannot fully remove it because we need the notation $(n_{1a}^*,n_{1i}^*,n_{3}^*)$  in order to state our results on the invasion. $(\widetilde n_{2a},\widetilde n_{2d},\widetilde n_{2i},\widetilde n_3)$ could be moved, but comparing~\eqref{viruscoexcondq=0} and~\eqref{viruscoexcond2} is a good motivation for the 6D model and it feels a bit weird to split prior work on the equilibria into two parts. \color{black}

\subsubsection{The single-host-virus sub-system without dormancy}\label{sec-3dimvirus}

In~\cite{BK98}, a three-dimensional sub-system 
%(with somewhat different notation) 
corresponding to $n_{2a}(0)=n_{2i}(0)=n_{2d}(0)=0$),   that is, \bl a {\em host-virus system without dormancy},  has been investigated, namely the following system  with \bl $r=0$:
%\footnote{\tt \gb Jetzt doch mit recovery? Ich meinte, schon bei BK98?
%\color{red} Also, wenn wir letztes Mal schon recovery hatten, können wir sie jetzt behalten, auch wenn sie vielleicht (wie Gergely Röst behauptet hat) unrealistisch ist. Für $r=0$ gilt automatisch $r\kappa\mu_1<v\sigma$, also sind wir im relevanten Fall, wo die Dormanz einen Vorteil hat. Dazu hatte ich vorher in Remark~\ref{rem-modellingchoices} eine Bemerkung geschrieben. (Auf jeden Fall habe ich jetzt auch $r=0$ erlaubt, anders als in unserem ersten Virus-Paper.)
\[
\begin{cases}   
\begin{aligned}
\dot n_{1a}(t) & = n_{1a}(t)\big( \lambda_1-\mu_1-C (n_{1a}(t)+n_{1i}(t))-D n_{3}(t) \big) + r n_{1i}(t),\\
\dot n_{1i}(t) & = D n_{1a}(t) n_{3}(t) -(r+v) n_{1i}(t), \\
\dot n_{3}(t) & = mv n_{1i}(t) -  D n_{1a}(t) n_{3}(t) -  \mu_3 n_{3}(t).
\end{aligned}
\numberthis\label{3dimvirus}
\end{cases}
\]
There (and for $r>0$ in \cite{BT21}) it was shown  that a strictly positive coexistence equilibrium -- representing a persistent virus epidemic -- of the form $(n_{1a}^*,n_{1i}^*,n_{3}^*)$
exists if and only if 
\[ 
mv>r+v \text{ and } \bar n_{1a} > \frac{\mu_3(r+v)}{D(mv-(r+v))} =n_{1a}^*  \qquad \oneC \numberthis\label{viruscoexcondq=0} 
\]
holds. Again, this corresponds to an equilibrium $(n_{1a}^*,n_{1i}^*,0,0,0,n_{3}^*)$ in the full system \eqref{6dimvirus}, where the first component $n_{1a}^*$ is given as above.
%Additionally, it was shown in~\cite{BK98} \gb for a simpler three-dimensional system without dormancy (and no recovery)\bl, and with a somewhat different notation, \bl  that an equilibrium of the form $(n_{1a}^*,n_{1i}^*,0,0,0,n_{3}^*)$ with three positive coordinates exists if and only if \[ mv>r+v \text{ and } \bar n_{1a} > \frac{\mu_3(r+v)}{D(mv-(r+v))} =:n_{1a}^* \numberthis\label{viruscoexcondq=0} \] holds. In the latter case, such an equilibrium is unique and its type 1a coordinate $n_{1a}^*$ equals the right-hand side of the second inequality in~\eqref{viruscoexcondq=0}. Let us consider the sub-system
%\[
%\begin{aligned}
%\dot n_{1a}(t) & = n_{1a}(t)\big( \lambda_1-\mu_1-C (n_{1a}(t)+n_{1i}(t))-D n_{3}(t) \big) + r n_{1i}(t),\\
%\dot n_{1i}(t) & = D n_{1a}(t) n_{3}(t) -(r+v) n_{1i}(t), \\
%\dot n_{3}(t) & = mv n_{1i}(t) -  D n_{1a}(t) n_{3}(t) -  \mu_3 n_{3}(t)
%\end{aligned}
%\numberthis\label{3dimvirus}
%\]
%of~\eqref{6dimvirus} corresponding to $n_{2a}(0)=n_{2i}(0)=n_{2d}(0)=0$. 
%\gb {\tt brauchen wir das jetzt wirklich alles schon hier? Oder kann man kürzen /schieben, z.B. den letzten Absatz?} \bl \color{red} Ich glaube, $\wt n_{3}$ brauchen wir, wenn wir die andere Invasionsrichtung in Sec.\ 1.5.\ erklären wollen. Den letzten Absatz brauchen wir nicht unbedingt hier, aber ich glaube, irgendwo sollten wir sagen, dass die ``Vorteile'' von Typ 2 etwas täuschend sein können, wenn wir das System nur ohne Typ 1 betrachten. \bl

In the following, \bl by saying that an equilibrium of a system of ODEs is unstable (resp.\ asymptotically stable), we mean that it is \emph{hyperbolically} unstable (resp.\ \emph{hyperbolically} asymptotically stable), i.e.\ all eigenvalues of the corresponding Jacobi matrix have nonzero real parts, unless mentioned otherwise. The case of non-hyperbolic equilibria will often be excluded. Moreover, by \bl asymptotic stability we mean \bl local asymptotic stability, i.e.\ the real parts of the eigenvalues are all negative, unless mentioned otherwise.
In this sense, in~\cite{BK98} it is shown that $(\bar n_{1a},0,0)$ is unstable whenever the coexistence condition\bl~\eqref{viruscoexcondq=0} holds and asymptotically stable whenever $mv \leq r+v$ or $\bar n_{1a} < n_{1a}^*$. Moreover, if $m$ is sufficiently close to the value where $\bar n_{1a}=n_{1a}^*$ holds, but strictly above this value, then $(n_{1a}^*,n_{1i}^*,n_3^*)$ is asymptotically stable. %Thus, for $\bar n_{1a}=n_{1a}^*$ we have a transcritical bifurcation. {\tt \gb Can this be understood here?} %When $(n_{1a}^*,n_{1i}^*,n_3^*)$ is asymptotically stable, we will be able to consider an invasion of type 2 against type 1 coexisting with type 3 from an initial condition for our rescaled stochastic process $\mathbf N^K(0)$ that is close to $(n_{1a}^*,n_{1i}^*,0,0,0,n_3^*)$.

\subsubsection{The single-host-virus sub-system with dormancy}

Finally, in~\cite{BT21} the sub-system describing a {\em single-host-virus population with dormancy}, corresponding to $n_{1a}(0)=n_{1i}(0)=0$, given by 
\begin{equation}\label{4dimvirus}
\begin{cases}
\begin{aligned}
\dot n_{2a}(t) & = n_{2a}(t)\big( \lambda_2-\mu_1-C {(n_{2a}(t)+n_{2i}(t)+n_{2d}(t))}-D n_{3}(t) \big)  + \sigma n_{2d}(t) + r n_{2i}(t),\\
\dot n_{2d}(t) & = q D n_{2a}(t)  n_{3}(t) - (\kappa\mu_1+\sigma) n_{2d}(t), \\
\dot n_{2i}(t) & = (1-q) D n_{2a}(t) n_{3}(t) -(r+v) n_{2i}(t), \\
\dot n_{3}(t) & = mv n_{2i}(t) - (1-q)  D n_{2a}(t) n_{3}(t) -  \mu_3 n_{3}(t),
\end{aligned}
\end{cases}
\end{equation}
is investigated. Here, the above system -- and similarly  for\bl~\eqref{6dimvirus} -- has an equilibrium of the form  $(\widetilde n_{2a},\widetilde n_{2d},\widetilde n_{2i},\widetilde n_{3})$ resp.\ \bl $(0,0,\widetilde n_{2a},\widetilde n_{2d},\widetilde n_{2i},\widetilde n_{3})$ with four positive coordinates if and only if  the coexistence condition \bl 
\[ 
mv>r+v \text{ and } \bar n_{2a} >  \frac{\mu_3 (r+v)}{(1-q)D(mv-(r+v))} =:\widetilde n_{2a} 
\qquad \twoC 
\numberthis\label{viruscoexcond2}
\]
holds, in which case the equilibrium is unique and $\widetilde n_{2a}$ is given as above. 
%We see that thanks to dormancy, type 2 can maintain a larger population size without risking the emergence of a persistent virus epidemic, i.e.\ an equilibrium with a positive virus coordinate. If one assumes that dormancy comes with a reproductive disadvantage, i.e.\ it reduces the net reproduction rate, this makes the defense of type 2 against the viruses even more efficient. 
%{\tt \gb The last two sentences seem a bit cryptic. Do we really need this here (should already be discussed in our previous work). \color{red} Yes, indeed, this used to be coupled with the statement that the two hosts should be considered within the same system, and without this it looks a bit cryptic. I agree that we can remove it. We have already motivated in the Introduction why we want to study a two-hosts-plus-virus model. \bl }
%Let us consider the sub-system
%\begin{equation}\label{4dimvirus}
%\begin{cases}
%\begin{aligned}
%\dot n_{2a}(t) & = n_{2a}(t)\big( \lambda_2-\mu_1-C {(n_{2a}(t)+n_{2i}(t)+n_{2d}(t))}-D n_{3}(t) \big)  + \sigma n_{2d}(t) + r n_{2i}(t),\\
%\dot n_{2d}(t) & = q D n_{2a}(t)  n_{3}(t) - (\kappa\mu_1+\sigma) n_{2d}(t), \\
%\dot n_{2i}(t) & = (1-q) D n_{2a}(t) n_{3}(t) -(r+v) n_{2i}(t), \\
%\dot n_{3}(t) & = mv n_{2i}(t) - (1-q)  D n_{2a}(t) n_{3}(t) -  \mu_3 n_{3}(t)
%\end{aligned}
%\end{cases}
%\end{equation}
%of~\eqref{6dimvirus} corresponding to $n_{1a}(0)=n_{1i}(0)=0$. 
\bl
Again, $(\bar n_{2a},0,0,0)$ is unstable if $\bar n_{2a} > \wt n_{2a}$ and asymptotically stable if $mv \leq r+v$ or $\bar n_{2a} < \widetilde n_{2a}$, and if $m$ is larger than the value where $\bar n_{2a}=\widetilde n_{2a}$ holds but sufficiently close to it, then 
$(\widetilde n_{2a}, \widetilde n_{2d}, \widetilde n_{2i}, \widetilde n_3)$ is asymptotically stable. %For $\bar n_{2a}=\wt n_{2a}$ we have another transcritical bifurcation. 
See~\cite{BT21} for details.

\medskip

Note that for large $m$, the equilibria of the systems \eqref{3dimvirus} and \eqref{4dimvirus},  
$$
(n_{1a}^*,n_{1i}^*,n_3^*) \quad \mbox{ and } \quad (\wt n_{2a}, \wt n_{2d}, \wt n_{2i}, \wt n_3),
$$ 
may or may not lose their stability. We summarize the corresponding results of~\cite{BK98,BT21} in Appendix~\ref{sec-Hopfwiederholung} below. The question of existence and uniqueness of a true coexistence equilibrium  for~\eqref{6dimvirus} will be settled in Propositions~\ref{prop-coexeq} and~\ref{prop-coexeqonly1virus} (see also Section~\ref{sec-lambda2lambda1}) and will turn out to be strongly related to the question of stability of $(n_{1a}^*,n_{1i}^*,0,0,0,n_3^*)$ and $(0,0,\wt n_{2a}, \wt n_{2d}, \wt n_{2i}, \wt n_3)$ in the full system, see Proposition~\ref{prop-fixation} below.

\subsection{Informal statement of the main result: Conditions for the emergence of dormancy. \bl}
 Assume \bl that a resident type 1 population is in stable coexistence with type 3 ( the case of a \bl { \em persistent virus epidemic}), and that at time 0, a single new active type 2 individual appears, e.g.\ via  ``mutation'' \bl  from the resident type 1 population or via  ``immigration''. Our aim is to \bl understand the fate of this new mutant:  Under which conditions (if at all) is it able to successfully invader \bl the resident population? 

Note that by successful invasion we  mean \bl  that the single newly arriving invader reaches a macroscopic population size on the order of the carrying capacity $K$ of the system. That is, for some $\beta >0$, the size of the type 2 population reaches as level  $\beta K$ with positive probability, as $K \to \infty$.  In this case, we speak of the {\em emergence of a dormancy trait}. \bl

 A persistent virus epidemic requires that \bl $(n_{1a}^*,n_{1i}^*,n_3^*)$ be a coordinatewise positive and asymptotically stable equilibrium of~\eqref{3dimvirus}. %This ensures in particular that the resident population can stably coexist with the virus population. 
Recall that given all other parameters, we can always choose a burst size $m$ such that this holds. Our initial condition  $\mathbf N^K(0)$ will then \bl be such that 
$$
N^K_{\ups}(0) \approx n_{\ups}^* \quad \mbox{ for all } \quad \ups \in \{ 1a, 1i, 3 \},
$$
while  $N^K_{2a}(0)=1/K$ and $N^K_{2d}(0)=N^K_{2i}(0)=0$. \bl That is, types 1a, 1i, and 3 are close to their joint equilibrium upon arrival of the new active type 2a individual.  Under these conditions, we have the following result (for a more precise statement, including also invasion probabilities and the corresponding timescales, see Theorem \ref{theorem-2invasion}).

\begin{theorem}[Invasion of a dormancy trait -- informal version]
\label{thm:informal}
    Consider the population model  $\mathbf N^K$ with initial condition as above and the corresponding limiting \bl system \eqref{6dimvirus}. Assume that $r\kappa\mu_1<v\sigma$ and $q>0$. Then, invasion of a dormancy trait is possible whenever
    \begin{equation}
        \lambda_1 - \lambda_2 < \frac{q D n_3^*(v\sigma -r\kappa\mu_1)}{(r+v)(\kappa\mu_1+\sigma)}, 
        \qquad \twoIone
        \numberthis\label{2invades1showSimple}
    \end{equation}
    %and  $r\kappa\mu_1<v\sigma$, 
    if one chooses the burst size $m$ appropriately. \bl 
\end{theorem}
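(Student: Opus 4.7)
The plan is to follow the standard stochastic adaptive-dynamics strategy (in the spirit of \cite{C06}). While $N_2^K$ is of order $1/K$, the resident host--virus ecosystem $(N_{1a}^K, N_{1i}^K, N_3^K)$ starts near the asymptotically stable equilibrium $(n_{1a}^*, n_{1i}^*, n_3^*)$ of \eqref{3dimvirus} — which exists for suitably chosen $m$ — and, by the functional law of large numbers together with hyperbolic stability, stays within an $\eps$-neighborhood of it on time scales up to $O(\log K)$. On this event the per-capita rates acting on type $2$ individuals are essentially frozen at their equilibrium values, so the (unrescaled) triple $(N_{2a}, N_{2d}, N_{2i})$ can be coupled with a three-type linear continuous-time Markov branching process.

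The mean-growth matrix of that branching process is the Jacobian of the type-$2$ block of \eqref{6dimvirus} at $(n_{1a}^*, n_{1i}^*, 0, 0, 0, n_3^*)$,
\[
J = \begin{pmatrix}
\lambda_2 - \mu_1 - C(n_{1a}^* + n_{1i}^*) - D n_3^* & \sigma & r \\[2pt]
q D n_3^* & -(\kappa\mu_1 + \sigma) & 0 \\[2pt]
(1-q) D n_3^* & 0 & -(r+v)
\end{pmatrix},
\]
which is irreducible and quasi-positive, so it has a real Perron eigenvalue $\lambda^*$. The branching process is supercritical iff $\lambda^* > 0$, and for an irreducible Metzler matrix with this sign pattern this is equivalent to $\det J > 0$ (equivalently, $-J$ fails to be a non-singular M-matrix). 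The problem thus reduces to a single sign computation.

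Using $\dot n_{1a} = 0$ at the resident equilibrium together with $D n_{1a}^* n_3^* = (r+v)\, n_{1i}^*$, the $(1,1)$ entry of $J$ simplifies to $-(\lambda_1-\lambda_2) - rDn_3^*/(r+v)$. A direct cofactor expansion then yields
\[
\det J = q D n_3^*\,(v\sigma - r\kappa\mu_1) - (\lambda_1-\lambda_2)(\kappa\mu_1+\sigma)(r+v),
\]
so that $\det J > 0$ is precisely \eqref{2invades1showSimple}; the hypotheses $v\sigma > r\kappa\mu_1$ and $q > 0$ ensure that the right-hand side of \eqref{2invades1showSimple} is strictly positive, giving a non-trivial parameter range. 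On the supercritical event, the approximating branching process survives with probability $\pi \in (0,1)$, the smallest fixed point in $[0,1]^3$ of its offspring generating map starting from a single type-$2a$ ancestor; a Champagnat-type coupling then shows that, with probability tending to $\pi$ as $K \to \infty$, the type-$2$ population first reaches $\lfloor \beta K \rfloor$ after time $\sim (\log K)/\lambda^*$.

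The main obstacle is the rigorous control of this coupling across the $O(\log K)$ growth window: one must simultaneously keep $(N_{1a}^K, N_{1i}^K, N_3^K)$ in a small tube around $(n_{1a}^*, n_{1i}^*, n_3^*)$ despite the feedback of $N_{2i}^K$-driven virion production (which contributes to $N_3^K$ at order $N_2^K/K$), and bound the discrepancy between $(N_{2a}, N_{2d}, N_{2i})$ and the branching process up to the first time $N_2 \geq \lfloor \eps K \rfloor$. This is handled by the familiar stopping-time hierarchy, exploiting hyperbolic asymptotic stability of the resident equilibrium, but it is technically the most delicate ingredient of the argument.
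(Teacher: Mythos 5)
Your proposal follows essentially the same route as the paper: freeze the resident triple at its stable equilibrium $(n_{1a}^*,n_{1i}^*,n_3^*)$, couple $(N_{2a},N_{2d},N_{2i})$ with a three-type branching process whose mean matrix is exactly the type-$2$ Jacobian block (the paper's matrix $A$ in \eqref{Adef}, transpose of $J^*$), and reduce the invasion condition to the sign of its determinant, which you compute identically to the paper. One caution: your claimed equivalence ``supercritical $\iff \det J>0$'' is only immediate in the direction needed here ($\det J>0$ forces a positive real eigenvalue since the Perron root of a Metzler matrix is real and the determinant is the product of eigenvalues); the converse, which the full Theorem~\ref{theorem-2invasion} requires, is not a consequence of ``$-J$ fails to be a non-singular M-matrix'' alone and is established in the paper via a trace bound plus a continuity argument in $\lambda_2$ ruling out two positive eigenvalues when $\det A<0$.
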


Condition \eqref{2invades1showSimple1} has an intuitive interpretation: Let us consider the special case that the rate of recovery of infected individuals, $r$, and the factor scaling the natural death rate for dormant individuals, $\kappa$, are almost zero. Then, the above condition reduces to
    \begin{equation}
         \lambda_1 - \lambda_2 \lesssim q D n_3^* \numberthis\label{2invades1showSimple1}. 
    \end{equation}
In other words: In a stable virus epidemic, the fitness disadvantage of the dormancy trait  needs to be smaller than the rate at which the invading type  may escape from mortal infection into dormancy when the number of virions is at its equilibrium. %\color{green} (where $\frac{v}{r+v}$ is the probability that the infected individual dies before recovering)\color{black}.
We call this the {\em basic dormancy emergence condition}. \bl
The somewhat more involved expression in \eqref{2invades1showSimple} is  also still \bl intuitive and will be interpreted in the next Section, when the fully detailed version of the result is stated.

The proof of the theorem combines  a branching process approximation during {\em Phase I} and a dynamical system approximation during {\em Phase II}. It fits into the general paradigm of stochastic population-genetic invasion models~\cite{C06} and more  specifically \bl uses arguments from the paper~\cite{C+19} and our previous works~\cite{BT19,BT20,BT21}. 

%We will only sketch the parts of the proof that are analogous to certain parts of these papers, and we will focus on the additional arguments that are necessary in order to finish the proof.

\begin{remark}[On the  necessity of a persistent (stable) virus epidemic\bl]\label{remark-2hosts}
    It is classical that in absence of the virus epidemic, always the host (competitor) with a higher reproduction rate wins. 
    %(see also a brief summary of this case in Section~\ref{sec-dynsyst}). 
    Thus, \bl 
    %Therefore, as we already mentioned in~\cite[Section 2.6.2]{BT21}, 
    not realizing the full reproductive potential of the host and investing in dormancy-defense instead is a self-constraining strategy, which might be vulnerable to the invasion of selfish cheaters %(eg.\ other host types reproducing faster and lacking dormancy) 
    during time periods when the virus concentration is small. 
    %we evaluate advantages and drawbacks of contact-mediated dormancy only in a situation with at least two host types and 
    This is the reason why we focus on a {\em stable} virus epidemic. Moving away from this equilibrium and allowing for fluctuations may pose interesting but likely also challenging problems for future research. 
    %for such a scenario is our model with $r=\kappa=0$.
    %one needs to study a system with (at least) two host types, one having the dormancy trait and the other one lacking it but reproducing faster. To this aim, in the present paper we will 
    %
    %Indeed, if the infection-related types 1i, 2d, 2i, and 3 are all absent from the system, $(N_{1a}^K(t),N_{1i}^K(t))_{t\geq 0}$ can be approximated on short time-scales (for 
    %  a precise formulation see Section~\ref{sec-dynsyst} or \cite[Theorem 11.2.1, p. 456]{EK}) by the system
\end{remark}

\subsection{After successful invasion: Outline of further results and conjectures}

%{\tt \gb In 1.5, 1.6 und am Anfang von 2.1 erklären wir, was wo steht, aber das ist so viel, dass man es sich kaum merken kann. Kürzen? \bl }
We shall also be interested in the fate of the entire population after a successful invasion,  in particular \bl whether the system reaches a state of stable coexistence of all the six types or 
 whether certain types are driven to extinction. However, \bl 
%extinction of types 1a and 1i or a fixation of types 2a, 2d, and 2i with type 3 staying in the system. 
due to the rather high complexity of the %qualitative behaviour of the  
%To obtain a full result in this direction, we would need more information on the global stability of equilibria of the underlying 
six-dimensional dynamical system~\eqref{6dimvirus},  we will unfortunately only be able to provide partial results and conjectures  supported by simulations and heuristics. \bl 

%\sout{The conjectures are based on the analysis of} 
 A crucial role will be played by \bl the reverse invasion direction of type 1 against type 2  while in coexistence \bl with type 3,
%\sout{in Section~\ref{sec-otherinvasion}}, 
where the analogue of the detailed invasion Theorem~\ref{theorem-2invasion} is Theorem~\ref{theorem-1invasion}.
%\sout{, and on results on the equilibria of the dynamical system~\eqref{6dimvirus} that we present in Section~\ref{sec-dynsyst}}.  
Although the invasion analysis  for \bl type 1 seems  biologically less relevant, % than the one of type 2, its investigation 
 it will be useful in order to \bl distinguish between the case of full six-dimensional type coexistence and fixation of type 2.

%Indeed, if fixation is possible, then on the path to their extinction, types 1a and 1i can be approximated by the same \emph{subcritical} branching process as in the initial phase of their invasion when they try to invade type 2 coexisting with type 3. Theorem~\ref{theorem-1invasion} \gb states \bl that for $r\kappa\mu_1 \neq v\sigma$, in case~\eqref{4dimvirus} exhibits its coordinatewise positive equilibrium $(\wt n_{2a},\wt n_{2d},\wt n_{2i},\wt n_3)$, type 1 can invade with non-vanishing probability if and only if \gb \eqref{2invades1showSimple} holds. \bl 
%\[ \lambda_2 - \lambda_1 < \frac{q D \wt n_3(r\kappa\mu_1-v\sigma)}{(r+v)(\kappa\mu_1+\sigma)}. \numberthis\label{1invades2show} \]
%Moreover, Proposition~\ref{prop-coexeq} below shows that the parameter regime where the system~\eqref{6dimvirus} has a coordinatewise positive equilibrium is (for $\lambda_2<\lambda_1$) precisely the one %where~\eqref{2invades1showSimple1} and~\eqref{1invades2show} both hold, ie.\ when 
%\gb where \bl both host types can invade \gb each other while coexisting \bl with the virus epidemic.

In Conjectures~\ref{conj-theorem2invades1} and~\ref{conj-theorem1invades2} we 
 claim that in the parameter regime where both types may invade, after a successful emergence of either host type also the stochastic individual-based model $\mathbf{N}^K$ will approach the six-type coexistence state with high probability, while if only one host type can invade, after a successful invasion,  this type \bl will reach fixation, leading to the extinction of the other host type (but not the virus). We also  discuss \bl simulation results supporting this conjecture.

There are choices of parameters where type 1 coexists with type 3 in absence of the other host type but type 2 does not. %We will make some comments on this case in Section~\ref{sec-only1virus}. 
Interestingly,~\eqref{6dimvirus} can have a coordinatewise positive equilibrium in this case too. Comparing~\eqref{viruscoexcondq=0} to~\eqref{viruscoexcond2}, we see that since $\lambda_2<\lambda_1$ and $q>0$, the coexistence of type 2 with type 3 implies that type 1 also coexists with type 3. We will show that if type 1 does not coexist with type 3, then~\eqref{6dimvirus} has no coordinatewise positive equilibrium, see Proposition~\ref{prop-coexeqonly1virus} below. In Section~\ref{sec-lambda2lambda1} we will briefly comment on the biologically less relevant but mathematically equally interesting case $\lambda_2>\lambda_1$%where the dormancy trait gives a reproductive advantage
. In this case, six-type coexistence does not occur, while it can happen that none of the host types can invade the other one  while coexisting with the virus. \bl  This scenario  will be called \bl called \emph{founder control}. 
%(see Section~\ref{sec-lambda2lambda1} for the origin of this expression). 
 Here,\bl~\eqref{6dimvirus} has a coordinatewise positive equilibrium, but we expect that it is unstable.

%They showed that increasing the burst size $m$, first the virus-free equilibrium loses its stability  and a coordinatewise positive equilibrium emerges via a transcritical bifurcation. Whenever such an equilibrium exists, it is also unique, we will denote it by $(n_{1a}^*,n_{1i}^*,n_{3}^*)$. It was also shown in~\cite{BK98} that increasing $m$ further, this equilibrium loses its stability via a supercritical Hopf bifurcation. They also analysed global stability properties of the system.

\subsection{Organization of the paper}  In Section~\ref{sec-dynsystresultsvirus} we present our results on the dynamical system~\eqref{6dimvirus} along with their proofs. In Section~\ref{sec-modeldefresultsvirus} we state our main theorems on the invasion of either host type against the other host type plus the virus type in the individual-based model, and we sketch their proof. The complete proof will be worked out in Appendix~\ref{sec-proofdetails}. In Section~\ref{sec-afterinvasion} we provide further conjectures on our stochastic model and deterministic dynamical system, accompanied by simulations in the case of the latter, and we visualize the different parameter regimes. Finally, in Section~\ref{sec-discussion} we discuss the related literature and our modelling choices, we provide some further interpretation of our results and conjectures, and we mention some open questions and potential directions for future work.
%Finally, in Section~\ref{sec-discussion} we discuss other aspects or our model (related literature and modelling choices, Hopf bifurcations in the underlying three- and four-dimensional dynamical system) and some further interpretation of our results and conjectures. 

While the proofs in Section~\ref{sec-dynsystresultsvirus} are a prerequisite for the proofs of our theorems below on the stochastic individual-based model, the following shortcut is available for readers mainly interested in the statement of the main results and conjectures. Section~\ref{sec-12invasion} (about the results on the stochastic model) can be understood without reading Section~\ref{sec-fixationproof}, and Section~\ref{sec-afterinvasion} (which in particular includes the conjectures and simulations related to the main results) is also independent of Section~\ref{sec-proofsketch}.

%In Section~\ref{sec-modeldefresultsvirus} we present our  formal invasion \bl results. Based on  these, \bl in Section~\ref{sec-conj} we provide further conjectures accompanied by simulations, visualizations of the different parameter regimes, and discussions related to our results and conjectures as well as relevant prior results. In Section~\ref{sec-branchingprocess} we  provide \bl the details of the branching process approximation  for Phase I \bl and prove related results for the dynamical system~\eqref{6dimvirus}. Finally, in Section~\ref{sec-proofsketch} we  prove \bl Theorem~\ref{theorem-2invasion} (the proof of Theorem~\ref{theorem-1invasion} is analogous given the results on the corresponding branching process presented in Section~\ref{sec-2invasion}). 

\section{Results on the dynamical system}\label{sec-dynsystresultsvirus}
\label{sec:dyn_system_results}
In this section we present our formal results on the behaviour of the six-dimensional dynamical system~\eqref{6dimvirus}. First, in Section~\ref{sec-dynsyst} we state Proposition~\ref{prop-fixation} on the stability of the ``one-host-type-plus-virus'' equilibria $(n_{1a}^*,n_{1i}^*,0,0,0,n_3^*)$ and $(0,0,\wt n_{2a},\wt n_{2d},\wt n_{2i},\wt n_3)$ under a local stability assumption on the corresponding equilibria $(n_{1a}^*,n_{1i}^*,n_3^*)$ and $(\wt n_{2a},\wt n_{2d},\wt n_{2i}, \wt n_3)$ in the sub-systems~\eqref{3dimvirus} and~\eqref{4dimvirus}, and Proposition~\ref{prop-coexeq} on the existence of an equilibrium of~\eqref{6dimvirus} with six positive coordinates. In Section~\ref{sec-only1virus}, we extend these results to the case when only type 1 but not type 2 can stably coexist with the virus. Finally, in Section~\ref{sec-lambda2lambda1} we briefly comment on the case $\lambda_2>\lambda_1$. We will observe that in this case, it is possible that both ``one-host-type-plus-virus'' equilibria are stable, i.e.\ each host type as a resident type is able to avoid invasion of the other host type as a mutant type. Section~\ref{sec-fixationproof} contains the proof of Proposition~\ref{prop-fixation}.  The proofs of the other assertions of Section~\ref{sec-dynsystresultsvirus} are rather standard (even if somewhat lengthy due to the high dimension of the system), therefore we postpone them until Appendix~\ref{sec-coexeq}. 

\subsection{Equilibria of the six-dimensional dynamical system}\label{sec-dynsyst}
In this section, we analyse the local and global \bl stability properties of equilibria of the  %six-dimensional 
system~\eqref{6dimvirus}.
%, in particular due to the potential Hopf bifurcations that were already observed in the sub-systems~\eqref{3dimvirus} and~\eqref{4dimvirus}. 
%we can characterize the existence of a coordinatewise positive equilibrium of the system~\eqref{6dimvirus} and the local stability of the one-type-plus virus equilibria of the system as follows.
The first result  is concerned with \bl the  {\em (local)} \bl stability of the equilibria $(n_{1a}^*,n_{1i}^*,0,0,0,n_3^*)$ and $(0,0,\wt n_{2a},\wt n_{2d}, \wt n_{2i}, \wt n_3)$, which can easily be determined  from the properties of the corresponding sub-systems. % in the two invasion direction.

\begin{prop}[Stability of equilibria inherited from sub-systems\bl]\label{prop-fixation}
Assume $r\kappa\mu_1 \neq v \sigma$.
\begin{enumerate}
\item  Assume that~\eqref{viruscoexcondq=0} holds. Then the equilibrium $(n_{1a}^*,n_{1i}^*,0,0,0,n_3^*)$ of~\eqref{6dimvirus} is 
\begin{itemize}
    \item {\em  unstable} under the condition
    \[ 
\lambda_1-\lambda_2 < \frac{qDn_3^* (v\sigma-r\kappa\mu_1)}{(r+v)(\kappa\mu_1+\sigma)}, 
\qquad \twoIone
\numberthis\label{2invades1} 
\] 
\item {\em asymptotically stable} under the condition
\[ 
\lambda_1-\lambda_2 > \frac{qDn_3^* (v\sigma-r\kappa\mu_1)}{(r+v)(\kappa\mu_1+\sigma)}, \qquad \twoNIone
\numberthis\label{2doesn'tinvade1} 
\] 
if additionally $(n_{1a}^*,n_{1i}^*,n_3^*)$ is an asymptotically stable equilibrium of~\eqref{3dimvirus}.\\
\end{itemize}
%\item Under condition~\eqref{2invades1}, $(n_{1a}^*,n_{1i}^*,0,0,0,n_3^*)$ is hyperbolically unstable. Under condition~\eqref{2doesn'tinvade1}, if $(n_{1a}^*,n_{1i}^*,n_3^*)$ is a hyperbolically asymptotically stable equilibrium of~\eqref{3dimvirus}, $(n_{1a}^*,n_{1i}^*,0,0,0,n_3^*)$ is a hyperbolically asymptotically stable equilibrium of~\eqref{6dimvirus}. 
\item  Assume that~\eqref{viruscoexcond2} holds. Then, the equilibrium $(0,0,\wt n_{2a}, \wt n_{2d}, \wt n_{2i}, \wt n_3)$  of~\eqref{6dimvirus} is 
\begin{itemize}
    \item {\em unstable} under the condition
     \[ 
    \lambda_1-\lambda_2 > \frac{qD \wt n_3 (v\sigma-r\kappa\mu_1)}{(r+v)(\kappa\mu_1+\sigma)}, \numberthis\label{1invades2}  \qquad \oneItwo
    \] 
    \item {\em asymptotically stable} under the condition
     \[ 
    \lambda_1-\lambda_2 < \frac{qD \wt n_3 (v\sigma-r\kappa\mu_1)}{(r+v)(\kappa\mu_1+\sigma)}, \qquad \oneNItwo\numberthis\label{1doesn'tinvade2} 
    \] 
    if additionally $(\wt n_{2a},\wt n_{2d}, \wt n_{2i}, \wt n_3)$ is an asymptotically stable equilibrium of~\eqref{4dimvirus}.
\end{itemize}
%\item Under condition~\eqref{1invades2}, $(0,0,\wt n_{2a}, \wt n_{2d}, \wt n_{2i}, \wt n_3)$  is hyperbolically unstable. Under condition~\eqref{1doesn'tinvade2}, if $(\wt n_{2a},\wt n_{2d}, \wt n_{2i}, \wt n_3)$ is a hyperbolically asymptotically stable equilibrium of~\eqref{4dimvirus}, $(0,0,\wt n_{2a}, \wt n_{2d}, \wt n_{2i}, \wt n_3)$ is a hyperbolically asymptotically stable equilibrium of~\eqref{6dimvirus}.
\end{enumerate}
%\gb {\tt Please double-check. Ok? \color{red} Ja, gute Idee! Ich würde es trotzdem einzeln machen mit~\eqref{viruscoexcondq=0} bzw.\ \eqref{viruscoexcond2}, sonst sind die Bedingungen etwas redundant.} \bl
\end{prop}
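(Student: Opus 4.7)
The plan is to linearize \eqref{6dimvirus} at each of the two equilibria and exploit a block-triangular structure of the Jacobian. Consider first the equilibrium $(n_{1a}^*,n_{1i}^*,0,0,0,n_3^*)$; ordering the variables as $(n_{1a},n_{1i},n_3,n_{2a},n_{2d},n_{2i})$, every partial derivative of the three type-$2$ right-hand sides in \eqref{6dimvirus} with respect to the type-$1$ or virus coordinates carries a factor $n_{2a}=0$ and hence vanishes at this equilibrium. The Jacobian thus decomposes into a $3\times 3$ upper block coinciding with the Jacobian of \eqref{3dimvirus} at $(n_{1a}^*,n_{1i}^*,n_3^*)$ (whose eigenvalues have negative real parts by assumption in the stable case) and a $3\times 3$ lower block
\[
M=\begin{pmatrix}\alpha & \sigma & r\\ qDn_3^* & -(\kappa\mu_1+\sigma) & 0\\ (1-q)Dn_3^* & 0 & -(r+v)\end{pmatrix},\qquad \alpha:=\lambda_2-\mu_1-C(n_{1a}^*+n_{1i}^*)-Dn_3^*.
\]
Using the equilibrium identities from \eqref{3dimvirus}, namely $\lambda_1-\mu_1-C(n_{1a}^*+n_{1i}^*)-Dn_3^*=-rn_{1i}^*/n_{1a}^*$ and $n_{1i}^*/n_{1a}^*=Dn_3^*/(r+v)$, I substitute $\alpha=-(\lambda_1-\lambda_2)-rDn_3^*/(r+v)$; a direct expansion then yields
\[
\det M=-(\lambda_1-\lambda_2)(\kappa\mu_1+\sigma)(r+v)+qDn_3^*(v\sigma-r\kappa\mu_1).
\]

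The crucial observation is that $M$ is an irreducible Metzler matrix. By Perron--Frobenius for Metzler matrices, its spectral abscissa $s(M)$ is a simple real eigenvalue whose real part strictly exceeds those of the remaining two eigenvalues, so stability of the six-dimensional equilibrium reduces to the sign of $s(M)$. Under \eqref{2invades1} one has $\det M>0$: either the two non-Perron eigenvalues form a complex conjugate pair $\mu\pm i\nu$, in which case $s(M)(\mu^2+\nu^2)=\det M>0$ forces $s(M)>0$, or they are real and then $s(M)$ is the largest of three real numbers whose product is positive, again forcing $s(M)>0$; instability follows. Under \eqref{2doesn'tinvade1} I establish $s(M)<0$ by exhibiting a strictly positive vector $x$ with $-Mx>0$ componentwise, the classical certificate that $-M$ is a nonsingular M-matrix. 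The natural ansatz is
\[
x_2=qDn_3^*x_1/(\kappa\mu_1+\sigma)+\varepsilon_2,\qquad x_3=(1-q)Dn_3^*x_1/(r+v)+\varepsilon_3,
\]
with $x_1>0$ and small $\varepsilon_2,\varepsilon_3>0$: the second and third components of $-Mx$ are then positive by design, and a short simplification reduces the first-component inequality to precisely \eqref{2doesn'tinvade1}.

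For part~(2) the same scheme applies at $(0,0,\widetilde n_{2a},\widetilde n_{2d},\widetilde n_{2i},\widetilde n_3)$: the Jacobian splits into a $4\times 4$ block equal to the Jacobian of \eqref{4dimvirus} (stable by assumption) and a $2\times 2$ type-$1$ block
\[
M_1=\begin{pmatrix}\widetilde\alpha & r\\ D\widetilde n_3 & -(r+v)\end{pmatrix},\qquad \widetilde\alpha:=\lambda_1-\mu_1-C\widetilde n_2-D\widetilde n_3.
\]
The four equilibrium relations for \eqref{4dimvirus} express $\widetilde n_{2d}$ and $\widetilde n_{2i}$ in terms of $\widetilde n_{2a},\widetilde n_3$ and give $\lambda_2-\mu_1-C\widetilde n_2-D\widetilde n_3=-(\sigma\widetilde n_{2d}+r\widetilde n_{2i})/\widetilde n_{2a}$; substituting into $\widetilde\alpha$ and simplifying shows that $\det M_1=-(r+v)\widetilde\alpha-rD\widetilde n_3>0$ is equivalent to \eqref{1doesn'tinvade2}, in which case additionally $\widetilde\alpha<-rD\widetilde n_3/(r+v)<0$ so the trace of $M_1$ is also negative and both eigenvalues have negative real parts, while under \eqref{1invades2} the determinant is negative and $M_1$ has one positive and one negative real eigenvalue. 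The main obstacle I anticipate is the $3\times 3$ Perron/M-matrix step in part~(1), specifically extracting the sign of $s(M)$ from $\det M$ alone (bypassing the Routh--Hurwitz middle coefficient, whose sign is not transparent) and executing the algebra that identifies the first-component inequality with \eqref{2doesn'tinvade1}; part~(2) is comparatively immediate since trace and determinant already decide stability in the $2\times 2$ case.
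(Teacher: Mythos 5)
Your proof is correct, and it uses the same block structure as the paper: after reordering coordinates, the Jacobian at $(n_{1a}^*,n_{1i}^*,0,0,0,n_3^*)$ is block-triangular with one block equal to the Jacobian of \eqref{3dimvirus} and the other equal to your $M$ (the paper's matrix $A$ in \eqref{Adef}, up to the ordering), and your determinant computation and the reduction of part (2) to trace/determinant of the $2\times2$ block coincide with the paper's. Where you genuinely diverge is the step you flagged as the main obstacle: extracting the sign of the spectral abscissa of the irreducible Metzler block $M$ from $\det M$. The paper argues by a homotopy in $\lambda_2$ — it notes that the trace is negative at the critical value $\lambda_2^*$, that $\det A$ is affine in $\lambda_2$ and vanishes only there, invokes Perron--Frobenius for $A+\varrho I$ to get a real dominant eigenvalue, and then excludes the ``two positive eigenvalues'' scenario by deforming down to $\lambda_2<\mu_1$ where all eigenvalues must have negative real parts. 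Your route is more self-contained: in the case $\det M>0$ the realness of the Perron eigenvalue plus the sign of the product of eigenvalues forces $s(M)>0$ directly, and in the case of \eqref{2doesn'tinvade1} your explicit positive vector $x$ with $Mx\ll 0$ (whose first-component inequality reduces, via the equilibrium identities, exactly to \eqref{2doesn'tinvade1}) certifies $s(M)<0$ by pairing with the positive left Perron eigenvector. This avoids the paper's continuity argument and the unproved-in-detail claim about the regime $\lambda_2<\mu_1$, at the modest cost of invoking the standard M-matrix/linear-Lyapunov characterization; both arguments rely on $r>0$ and $q\in(0,1)$ for irreducibility of $M$.
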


%\color{red} {\tt I'm not sure if ``stability of sub-systems'' is the best way of expressing this, because here we assume stability of the equilibria in the corresponding subsystems and determine the stability of these equilibria extended with 0's in the full system.}\gb Better now? \bl

Note that the mere existence of $(n_{1a}^*,n_{1i}^*,n_3^*)$  resp.\ $(\wt n_{2a},\wt n_{2d}, \wt n_{2i}, \wt n_3)$ as a coordinatewise positive equilibrium is not sufficient  for asymptotic stability here, and \bl the additional condition of their stability is required because the sub-systems~\eqref{3dimvirus} and~\eqref{4dimvirus} exhibit Hopf bifurcations in certain parameter regimes, leading to the loss of stability of these equilibria for large $m$, see  also \bl Appendix~\ref{sec-Hopfwiederholung}. 

\begin{remark}[Link to classical invasion analysis in adaptive dynamics] 
Note that condition~\eqref{2invades1} is already our dormancy emergence condition from Theorem~ \ref{thm:informal}. An intuitive biological interpretation of \eqref{2invades1} will be provided below in terms of the stochastic individual-based model in Remark~\ref{remark-rkappamu1-vsigma}; condition~\eqref{1invades2} can be interpreted in a similar way. Essentially,~\eqref{2invades1} ensures that a single type 2 individual can invade a resident population of type 1 stably coexisting with type 3 with non-vanishing probability as $K\to\infty$, and~\eqref{1invades2} yields the same with the roles of the two host types swapped; hence the notations \oneItwo, \twoIone, and their negations \oneNItwo, \twoNIone. See Theorems~\ref{theorem-2invasion} and~\ref{theorem-1invasion} for a precise formulation of these assertions.

The derivation from the stochastic model precisely aligns itself with the classical notion of {\em invasion fitness} from adaptive dynamics for the deterministic equation (see eg.\ \cite{M+92,M+96,DD99}): Indeed, 
type 2 as a mutant type has a positive {invasion fitness}, that is, initial growth rate while the residents are equilibrium, under \eqref{2invades1}. A way to formally derive the invasion fitness from the deterministic system is to compute the largest eigenvalue of the matrix $A$ introduced in~\eqref{Adef} in the proof of Proposition~\ref{sec-fixationproof} below, which is simply the sub-matrix corresponding to type 2 in the Jacobi matrix of the dynamical system~\eqref{6dimvirus} at the equilibrium $(n_{1a}^*,n_{1i}^*,0,0,0,n_3^*)$. This real part is positive if and only if~\eqref{2invades1} holds and strictly negative if and only if the strict reverse inequality~\eqref{1invades2} holds. %Hence the notation  \twoIone\ vs.\ \twoNIone. 

Note, however, that information such as the invasion {\em probability} of a single mutant, or the time it takes for invasion, need a refined probabilistic analysis that we provide in Section~\ref{sec-modeldefresultsvirus}.

%need additional probabilistic arguments. {\tt Ok? Was haeltst Du davon?} \color{red} {\tt Schön, danke! Ich habe die Referenzen~\cite{M+92,M+96,DD99} hinzugefügt -- wollen wir sie hier noch zitieren, und falls ja, wie genau? Z.B.\ Coquille--Kraut--Smadi nutzen den Ausdruck ``invasion fitness'' ganz ohne konkrete Referenz...}
\end{remark}
\color{black}

A proof of Proposition~\ref{prop-fixation} can be found in Section~\ref{sec-fixationproof}.
%\sout{right after the analysis of the approximating branching processes in Sections~\ref{sec-2invasion} and~\ref{sec-1invasion}}  %, on which the proof is based. 
According to the proposition,
%~\ref{prop-fixation}, 
if  the invasion conditions\bl~\eqref{2invades1} and~\eqref{1invades2}  {\em both} \bl  hold, then the equilibria $(n_{1a}^*,n_{1i}^*,0,0,0,n_3^*)$ and $(0,0,\wt n_{2a},\wt n_{2d}, \wt n_{2i}, \wt n_3)$ cannot be stable even if $(n_{1a}^*,n_{1i}^*,n_3^*)$ and $(\wt n_{2a},\wt n_{2d}, \wt n_{2i}, \wt n_3)$ are stable. It turns out that in this case, a coordinatewise positive equilibrium  of the full system~\eqref{6dimvirus} \bl emerges.
%Moreover, we can characterize the existence of a coordinatewise positive equilibrium of the system~\eqref{6dimvirus} as follows.

\begin{prop}\label{prop-coexeq}
Assume that $r\kappa\mu_1 \neq v \sigma$ and that both~\eqref{viruscoexcondq=0} and~\eqref{viruscoexcond2} hold (so that $(n_{1a}^*,n_{1i}^*,n_{3}^*)$ and $(\wt n_{2a},\wt n_{2d},\wt n_{2i},\wt n_3)$ are coordinatewise positive equilibria of the corresponding sub-systems, which in particular implies $mv>r+v$). Then the system~\eqref{6dimvirus} has a unique coordinatewise nonzero equilibrium which we denote by 
$$
\mathbf x=(x_{1a},x_{1i},x_{2a},x_{2d},x_{2i},x_3).
$$ 
It satisfies
\[ x_3 = \frac{\lambda_2-\lambda_1}{qD}
\frac{(\kappa\mu_1+\sigma)(r+v)}{r\kappa\mu_1-v\sigma}, \numberthis\label{x3def} \]
\[ x_{1a} + (1-q)  x_{2a} = \frac{\mu_3(r+v)}{D(mv-(r+v))} = n_{1a}^* = (1-q) \wt n_{2a}, \numberthis\label{xasum} \]
and
\[ x_{1i}+x_{2i} = \frac{\mu_3 x_3}{mv-(r+v)}. \numberthis\label{xicond} \]
%\begin{enumerate}[(i)]
%\item
%Let us assume that both~\eqref{viruscoexcondq=0} and~\eqref{viruscoexcond2} hold, i.e., $(n_{1a}^*,n_{1i}^*,n_3^*)$ and $(\wt n_{2a},\wt n_{2d}, \wt n_{2i}, \wt n_3)$ both exist as coordinatewise positive equilibria of the respective sub-systems. Then, the following assertions hold.
Moreover,
\begin{enumerate}
\item $\mathbf x$ is coordinatewise positive if and only if  the invasion conditions\bl~\eqref{1invades2} and~\eqref{2invades1} both hold,  or, \bl equivalently
\[
\widetilde n_3 < x_3 < n_3^*.
\numberthis\label{n3x3n3} \]
\item It is never true that $n_3^* < x_3 < \wt n_3$. 
\end{enumerate}
%%% \widetilde n_3 < \frac{\lambda_2-\lambda_1}{qD} \frac{(\kappa\mu_1+\sigma)(r+v)}{r\kappa\mu_1-v\sigma} 

%\item If \eqref{virusnoncoexcondq=0} holds (so that the sub-system~\eqref{3dimvirus} has no coordinatewise positive equilibrium), then~\eqref{6dimvirus} has no such equilibrium either.
%\item If \eqref{viruscoexcondq=0} and~\eqref{virusnoncoexcond2} hold (so that $(n_{1a}^*,n_{1i}^*,n_3^*)$ is a coordinatewise positive equilibrium of~\eqref{3dimvirus} but~\eqref{4dimvirus} has no coordinatewise positive equilibrium), then $(x_{1a},x_{1i},x_{2a},x_{2d},x_{2i},x_3)$ is coordinatewise positive if and only if
%\[
%0 < x_3 < n_3^*. \numberthis\label{0x3n3} \]
 %%% 0 < \frac{\lambda_2-\lambda_1}{qD} \frac{(\kappa\mu_1+\sigma)(r+v)}{r\kappa\mu_1-v\sigma} < n_3^*.
%%%(in particular this requires that $\lambda_2-\lambda_1$ and $r\kappa\mu_1-v\sigma$ again have the same sign). Such an equilibrium is again unique and denoted as in case (i), and it satisfies~\eqref{x3def}, \eqref{xasum}, and~\eqref{xicond} 
%\end{enumerate}
\end{prop}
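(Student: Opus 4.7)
The plan is to set $\dot{\mathbf n}=0$ in~\eqref{6dimvirus} and chase the resulting equations systematically. Any coordinatewise nonzero equilibrium $\mathbf x$ must satisfy $x_3\neq 0$ (otherwise $\dot n_{1i}=0$ would force $x_{1i}=0$) and, by the analogous logic applied to $\dot n_{1i}=\dot n_{2d}=\dot n_{2i}=0$, also $x_{1a},x_{2a}\neq 0$. These three ``fast'' equations yield the proportionalities
\[
x_{1i}=\tfrac{Dx_3}{r+v}x_{1a},\qquad x_{2d}=\tfrac{qDx_3}{\kappa\mu_1+\sigma}x_{2a},\qquad x_{2i}=\tfrac{(1-q)Dx_3}{r+v}x_{2a}.
\]
Substituting into $\dot n_3=0$ and dividing by $x_3$ produces both~\eqref{xasum} and~\eqref{xicond} at once, using $mv>r+v$ (implicit in the coexistence assumptions) to ensure positivity of the arising denominator.

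Next, dividing $\dot n_{1a}=0$ and $\dot n_{2a}=0$ by $x_{1a}$ and $x_{2a}$ respectively and substituting the ratios above, I obtain two per-capita equations whose common competition term $C(x_1+x_2)$ cancels on subtraction. A short simplification then isolates~\eqref{x3def}, the hypothesis $r\kappa\mu_1\neq v\sigma$ being exactly what makes the resulting linear equation in $x_3$ uniquely solvable. With $x_3$ now fixed, equation~\eqref{xasum} together with (say) the $\dot n_{1a}=0$ per-capita equation forms a $2\times 2$ linear system for $(x_{1a},x_{2a})$; its determinant evaluates to $q(1+Dx_3/(\kappa\mu_1+\sigma))$, which is nonzero under our hypotheses, giving uniqueness of $\mathbf x$.

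The positivity analysis in part~(1) rests on the following observation: setting $x_{2a}=0$ in the equilibrium equations collapses~\eqref{6dimvirus} to the single-host-virus sub-system~\eqref{3dimvirus}, whose unique positive equilibrium satisfies $x_3=n_3^*$ and $x_{1a}=n_{1a}^*$; symmetrically, $x_{1a}=0$ reduces the system to~\eqref{4dimvirus} and forces $x_3=\wt n_3$ and $x_{2a}=\wt n_{2a}$. Solving the above linear system by Cramer's rule expresses $x_{1a}$ and $x_{2a}$ as rational functions of $x_3$ with common positive denominator $q(1+Dx_3/(\kappa\mu_1+\sigma))$; a direct computation shows that the numerator of $x_{2a}$ is affine and strictly decreasing in $x_3$, vanishing at $n_3^*$, while the numerator of $x_{1a}$ is affine and strictly increasing in $x_3$, vanishing at $\wt n_3$. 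Hence $x_{2a}>0\Leftrightarrow x_3<n_3^*$ and $x_{1a}>0\Leftrightarrow x_3>\wt n_3$, and since $x_3>0$ combined with $x_{1a},x_{2a}>0$ forces $x_{1i},x_{2d},x_{2i}>0$ via the proportionalities, this proves part~(1). Rewriting the double inequality $\wt n_3<x_3<n_3^*$ through formula~\eqref{x3def} immediately reproduces the invasion conditions~\eqref{1invades2} and~\eqref{2invades1}.

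Part~(2) is then immediate from the same sign characterization: a hypothetical $n_3^*<x_3<\wt n_3$ would force both $x_{1a}<0$ and $x_{2a}<0$, which is incompatible with the identity~\eqref{xasum} whose right-hand side $n_{1a}^*=(1-q)\wt n_{2a}$ is strictly positive. The only non-routine point in the whole argument is the sign/monotonicity step in paragraph three (pinning down that the numerator of $x_{2a}$ is decreasing and that of $x_{1a}$ is increasing), and I expect that to be the main piece of bookkeeping; the rest reduces to linear algebra and direct verification against the known sub-system equilibria.
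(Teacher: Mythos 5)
Your derivation of \eqref{x3def}, \eqref{xasum}, \eqref{xicond} (divide the fast equations to get the proportionalities, substitute into $\dot n_3=0$, then subtract the two per-capita active-host equations) and your reduction of uniqueness to a $2\times 2$ linear system in $(x_{1a},x_{2a})$ follow essentially the paper's route, and are fine. The genuine gap is in the positivity analysis. For the system you actually set up --- \eqref{xasum} paired with the per-capita $\dot n_{1a}=0$ equation --- the Cramer numerator of $x_{2a}$ does vanish at $\xi=n_3^*$, because $(n_{1a}^*,0)$ solves both equations there; but the Cramer numerator of $x_{1a}$ does \emph{not} vanish at $\xi=\wt n_3$. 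Your justification (``setting $x_{1a}=0$ collapses the system to \eqref{4dimvirus}'') applies to the original equation $\dot n_{1a}=0$, which the boundary equilibrium satisfies trivially (both sides are zero); it does not transfer to the per-capita version obtained by dividing by $x_{1a}$, which is a genuinely additional constraint. Concretely, substituting $(x_{1a},x_{2a})=(0,\wt n_{2a})$ and $\xi=\wt n_3$ into that equation leaves the residual $\lambda_1-\mu_1-C(\wt n_{2a}+\wt n_{2d}+\wt n_{2i})-D\wt n_3\,v/(r+v)=(\lambda_1-\lambda_2)-qD\wt n_3(v\sigma-r\kappa\mu_1)/\big((r+v)(\kappa\mu_1+\sigma)\big)$, i.e.\ $-\det F/(r+v)$ for the matrix $F$ of \eqref{Fdef}, which vanishes only in the excluded critical case where \eqref{1invades2} holds with equality. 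So the claimed root of the $x_{1a}$-numerator is wrong, and with it the equivalence $x_{1a}>0\Leftrightarrow x_3>\wt n_3$ as you derive it, on which both parts (1) and (2) rest.

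The repair is small: for the sign of $x_{1a}$ you must pair \eqref{xasum} with the per-capita $\dot n_{2a}=0$ equation instead (its left-hand side has the same coefficients in $(x_{1a},x_{2a})$, so the determinant is unchanged, and $(0,\wt n_{2a})$ \emph{is} a solution of that pair at $\xi=\wt n_3$); since the true equilibrium satisfies both per-capita equations at the true $x_3$, using one pair for $x_{2a}$ and the other for $x_{1a}$ is consistent. The paper instead avoids Cramer's rule: it observes that coordinatewise positivity forces $x_{1a}+x_{2a}>n_{1a}^*$ resp.\ $x_{1a}+x_{2a}<\wt n_{2a}$ via \eqref{xasum}, and compares $\lambda_1-\mu_1-C(x_1+x_2)=Dx_3v/(r+v)$ with its analogue at $(n_{1a}^*,n_{1i}^*,n_3^*)$, and the $\lambda_2$-equation with its analogue at $(\wt n_{2a},\wt n_{2d},\wt n_{2i},\wt n_3)$, using $(x_{1i}+x_{2i})/x_3=n_{1i}^*/n_3^*=\wt n_{2i}/\wt n_3$; monotonicity then gives the same sign dichotomies. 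One further (shared) caveat: your determinant $-Cq\big(1+Dx_3/(\kappa\mu_1+\sigma)\big)$ can vanish when $x_3=-(\kappa\mu_1+\sigma)/D<0$, which the stated hypotheses do not exclude when $r\kappa\mu_1>v\sigma$; the paper's bare ``full rank'' assertion is equally silent here, and the issue does not affect parts (1)--(2) since there $x_3>0$.
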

The proof of Proposition~\ref{prop-coexeq} will be carried out in Appendix~\ref{sec-coexeq}. Proposition~\ref{prop-coexeqonly1virus} below determines whether a coordinatewise positive equilibrium exists in certain cases not covered by Proposition~\ref{prop-coexeq}. \color{black}

\begin{remark}[Heuristics for the emergence of a full coexistence equilibrium]\label{remark-virusproportions}
The content of part (1) of Proposition~\ref{prop-coexeq} is that for $\lambda_2<\lambda_1$, \eqref{6dimvirus} has a coordinatewise positive equilibrium if and only if  both ``one-host-type-plus virus'' equilibria are unstable (which is a strong indication of long-term coexistence of all six types, both in the stochastic and in the dynamical system).  From the perspective of the virus, condition \eqref{n3x3n3} says that \bl the invasion of type 2 reduces and the invasion of type 1 increases the  equilibrium virus population size. \bl 
%density of virions in equilibrium.

%Intuitively, $\mathbf x$ is positive if \color{red} the invasion of type 2 reduces and the invasion of type 1 increases the density of virions in equilibrium\color{black}. {\tt \gb This may be a correct reformulation of the statement, but I don't find that particularly intuitive...}%and only if the changes of the virus population size due to the invasion in the two directions {\gb ?? \bl} point towards each other. {\gb Kann man das noch kürzen oder prägnater formulieren? \bl}

Given that type 2a has a lower birth rate than type 1a,  the \twoC-condition~\eqref{viruscoexcond2} implies the \oneC-condition~\eqref{viruscoexcondq=0} even for $q=0$. Hence it is always ``easier'' for the virus to persist with type 1 in absence of type 2 than with type 2 in absence of type 1, and with \bl dormancy $(q>0)$, this effect becomes  only \bl stronger. Thus, part (2) of Proposition~\ref{prop-coexeq} does not come as a surprise. 
%~\eqref{viruscoexcond2} implies~\eqref{viruscoexcondq=0}, i.e.\ it is easier for the \gb virus \bl to cause a persistent epidemic (coming with a higher amount of virions in equilibrium) for type 1 in absence of type 2 than for type 2 in absence of type 1, and due to dormancy, this effect becomes \gb only \bl stronger. Thus, part (2) of Proposition~\ref{prop-coexeq} does not come as a surprise.  
%Moreover, for $r \kappa\mu_1 < 0$ {\gb \tt Das ist komisch... Ich schlage vor, diesen Teil zu streichen. \color{red} OK! } \sout{(and $\lambda_2<\lambda_1$), Theorem~\ref{theorem-2invasion} rewritten using the definition~\eqref{x3def} of $x_3$ yields the following. If the invasion of type 2 can reduce the amount of viruses in equilibrium (i.e.\ $x_3 \in (0,n_3^*)$), this invasion will be successful with asymptotically positive probability. On the other hand, Theorem~\ref{theorem-1invasion} tells that type 1 has an asymptotically positive chance to invade as long as $x_3 > \wt n_3$, and such an invasion will increase the equilibrium size of the virus population.}
\end{remark}

%{\tt \gb Auch die nächsten beiden Abschnitte würde ich hier streichen, denn es sind ja keine ``main results'', und die Punkte werden ja später wieder angesprochen. \color{red} Das mit $\lambda_2>\lambda_1$ können wir gerne streichen, das ist wirklich kein ``main result''. Vor Remark~\ref{remark-virusproportions} schlage ich einen kürzeren Hinweis auf Proposition~\ref{prop-coexeqonly1virus} vor.}

%\sout{Note that Proposition~\ref{prop-coexeq} only \gb states \bl  when a coordinatewise positive equilibrium exists under our standing assumption $\lambda_2<\lambda_1$ (a related remark on the case $\lambda_2>\lambda_1$ can be found in Remark~\ref{remark-afterconj}). Moreover, Proposition~\ref{prop-coexeq} does not treat the case when only type 1 coexists with type 3 while type 2 does not; this case will be analysed in Section~\ref{sec-only1virus} (see Proposition~\ref{prop-coexeqonly1virus}). } 

{Note that in the context of Proposition~\ref{prop-coexeq}, the mere existence of $\mathbf x$ as a coordinatewise positive equilibrium does not require the (local) stability of $(n_{1a}^*,n_{1i}^*,n_{3}^*)$ and $(\wt n_{2a},\wt n_{2d},\wt n_{2i},\wt n_3)$.}
%{\tt \color{red} Du meinst, irgendwo später kommt das nochmal. Kannst Du sagen, wo genau? Ich würde diesen Satz ungerne komplett streichen, es ist schon wichtig, dass das Koexistenz-Gleichgewicht unabhängig von der Stabilitäten existiert...} %. In other words, the proposition also applies $m$ is above the corresponding Hopf bifurcation point (if that point is finite) for one or two of the host types (in absence of the other host type).
 Unfortunately, we \bl have no rigorous results on the stability of $\mathbf x$, but there is some evidence for its asymptotic stability from simulations (see Figure~\ref{figure-green}) and from the numerical computation of the eigenvalues of the Jacobi matrix for concrete choices of parameters\. 
%\footnote{\color{red} \tt Wir sehen für konkrete Parameterwahlen nicht nur, dass die numerische Lösung des DGL-Systems gegen das Äquilibrium konvergiert, sondern auch, dass die Eigenwerte der Jacobi-Matrix negativ sind. Das ist vermutlich mittels der Potenzmethode gemacht, zählt das auch nur als `Simulation'? \gb Guter Punkt!} \bl 
. In fact, we expect that $\mathbf x$ is always asymptotically stable when $(n_{1a}^*,n_{1i}^*,n_{3}^*)$ and $(\wt n_{2a}, \wt n_{2d}, \wt n_{2i}, \wt n_3)$ are both asymptotically stable and $(n_{1a}^*,n_{1i}^*,0,0,0,n_{3}^*)$ and $(0,0,\wt n_{2a}, \wt n_{2d}, \wt n_{2i}, \wt n_3)$ are both unstable, see Conjecture~\ref{conj-fixation} below.  

%doesn't seem necessary at the moment:
%Unfortunately, we are not able to fully determine these properties due to the high complexity of %the qualitative behaviour of 
%the system. In this section, we provide some partial results on the dynamical system in order to obtain at least reasonable conjectures.
%%%%%on the fate of the population after a successful invasion of type 1 or 2. 
%These conjectures will be presented and discussed in Section~\ref{sec-afterinvasion},  and \bl  supported by simulations.

\subsection{The case when only  type 1 but not type 2 can stably coexist with the virus\bl }\label{sec-only1virus} 

The following proposition  complements the results \bl of Proposition~\ref{prop-coexeq}  by considering the \bl existence vs.\ non-existence of a coordinatewise positive equilibrium of the  6-dimensional \bl system~\eqref{6dimvirus} in case only type 1  can coexist with type 3, while type 2  cannot \bl coexist with type 3.  The reverse situation (where \bl only type 2 coexists with type 3) is not possible under our standing assumption $\lambda_2<\lambda_1$.

\begin{prop}\label{prop-coexeqonly1virus}
Assume that $r \kappa\mu_1 \neq v \sigma$. 
\begin{enumerate}
\item If 
\[ 
mv \leq r+v \qquad \text{ or } \qquad  \Big( mv>r+v \quad \text{ and } \quad \bar n_{1a} < \frac{\mu_3(r+v)}{D(mv-(r+v))} =\wt n_{2a} \Big) \numberthis\label{virusnoncoexcondq=0} 
\]
holds (so that the sub-system~\eqref{3dimvirus} has no coordinatewise positive equilibrium), then~\eqref{6dimvirus} has no such equilibrium either.
\item If the \twoC-condition\bl~\eqref{viruscoexcondq=0} and
\[ 
mv \leq r+v \quad \text{ or } \quad  \Big( mv>r+v  \quad \text{ and } \quad \bar n_{2a} < \frac{\mu_3(r+v)}{(1-q)D(mv-(r+v))} =\wt n_{2a} \Big)  \numberthis\label{virusnoncoexcond2} 
\]
hold (so that $(n_{1a}^*,n_{1i}^*,n_3^*)$ is a coordinatewise positive equilibrium of~\eqref{3dimvirus} but~\eqref{4dimvirus} has no coordinatewise positive equilibrium), then \eqref{6dimvirus} again has a unique coordinatewise nonzero equilibrium $(x_{1a},x_{1i},x_{2a},x_{2d},x_{2i},x_3)$ satisfying the equations~\eqref{x3def}--\eqref{xicond}, which is coordinatewise positive if and only if
\[
0 < x_3 < n_3^*. \numberthis\label{0x3n3} \]
\end{enumerate}
\end{prop}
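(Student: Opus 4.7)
The plan is to follow the strategy used for Proposition~\ref{prop-coexeq}: set each right-hand side in~\eqref{6dimvirus} to zero and manipulate the algebraic system. The skeleton is identical to that case; what changes between the two propositions is only which positivity bounds on $x_3$ survive. First, I reduce the equilibrium equations. From $\dot n_{1i}=\dot n_{2d}=\dot n_{2i}=0$ one expresses $n_{1i},\,n_{2d},\,n_{2i}$ as explicit positive multiples of $n_{1a}n_3$, $n_{2a}n_3$, $n_{2a}n_3$ respectively. Plugging these into $\dot n_3=0$ and cancelling one factor of $n_3>0$ yields
\[
n_{1a}+(1-q)n_{2a}\;=\;\frac{\mu_3(r+v)}{D(mv-(r+v))},
\]
which already forces $mv>r+v$ and, under \eqref{viruscoexcondq=0}, equals $n_{1a}^*$. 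Assuming in addition $n_{1a},n_{2a}>0$, the identities $\dot n_{1a}+\dot n_{1i}=0$ and $\dot n_{2a}+\dot n_{2d}+\dot n_{2i}=0$ each yield a formula for $\lambda_j-\mu_1-C(n_1+n_2)$; subtracting them and solving for $n_3$ reproduces~\eqref{x3def}, while~\eqref{xasum}--\eqref{xicond} then follow.

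For part~(1), if $mv\le r+v$ then the display above is inconsistent with $n_3>0$, giving the claim. If $mv>r+v$ but $\bar n_{1a}<n_{1a}^*$, assume for contradiction that a positive equilibrium exists. Then $n_{1a}+(1-q)n_{2a}=n_{1a}^*$, whereas the $\dot n_{1a}+\dot n_{1i}=0$ identity forces $C(n_1+n_2)\le\lambda_1-\mu_1$, i.e.\ $n_1+n_2\le\bar n_{1a}$. Since $q\in[0,1]$ and all coordinates are nonnegative,
\[
n_{1a}^*\;=\;n_{1a}+(1-q)n_{2a}\;\le\; n_{1a}+n_{2a}\;\le\;n_1+n_2\;\le\;\bar n_{1a}\;<\;n_{1a}^*,
\]
a contradiction.

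For part~(2), the identities above determine $x_3$ via~\eqref{x3def}, and then $(x_{1a},x_{2a})$ via a $2\times 2$ linear system: the relation $x_{1a}+(1-q)x_{2a}=n_{1a}^*$ together with the expanded form of $n_1+n_2=\bar n_{1a}-\tfrac{vDx_3}{C(r+v)}$. Its determinant equals $q\bigl(1+\tfrac{Dx_3}{\kappa\mu_1+\sigma}\bigr)\ne 0$, which proves uniqueness; the coordinates $x_{1i},x_{2i},x_{2d}$ are then read off from the substitution formulas. A direct application of Cramer's rule to the same $2\times 2$ system produces an explicit expression for $x_{2a}$, and substituting the formula for $n_3^*$ obtained analogously from~\eqref{3dimvirus} yields $x_{2a}>0\Leftrightarrow x_3<n_3^*$.

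The main obstacle is the positivity criterion for $x_{1a}$: the raw Cramer expression compares to $\bar n_{1a}$ and does not directly display the claim that $x_{1a}>0$ for every $x_3>0$. The trick is to re-express $n_1+n_2$ via the type-2 rather than the type-1 version of the $\dot n_{\,\cdot\, a}+\cdots=0$ identity (the two are equivalent thanks to~\eqref{x3def}) and to use the identity $n_{1a}^*=(1-q)\wt n_{2a}$. After rearrangement one arrives at
\[
x_{1a}\cdot\Big(q+\tfrac{qDx_3}{\kappa\mu_1+\sigma}\Big)\;=\;(1-q)\Big[(\wt n_{2a}-\bar n_{2a})+\tfrac{Dx_3}{C}\,T\Big],
\]
with an explicit $T>0$, which is manifestly positive for every $x_3>0$ under the standing assumption $\bar n_{2a}<\wt n_{2a}$ enforced by~\eqref{virusnoncoexcond2}. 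Consequently no lower bound on $x_3$ appears, and the positivity criterion collapses to $0<x_3<n_3^*$, as asserted.
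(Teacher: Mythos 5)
Your proof is correct. Part (1) and the derivation of \eqref{xasum}, \eqref{x3def}, \eqref{xicond} follow the paper's route essentially verbatim (the paper phrases the contradiction in part (1) as the left-hand side of \eqref{1pos} becoming negative, which is your inequality $n_1+n_2\le\bar n_{1a}$ read the other way around). Where you genuinely diverge is the positivity criterion in part (2). The paper handles it by carrying over the comparison arguments from the proof of Proposition~\ref{prop-coexeq}: the upper bound $x_3<n_3^*$ comes from comparing with the positive equilibrium of \eqref{3dimvirus}, and the lower bound $x_3>\wt n_3$ is observed to degenerate to $x_3>0$ because \eqref{4dimvirus} no longer has a positive equilibrium to compare against --- a step the paper leaves at the level of ``the fourth or fifth equation yield no additional condition''. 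You instead solve the reduced $2\times 2$ linear system for $(x_{1a},x_{2a})$ explicitly: the determinant $q\bigl(1+Dx_3/(\kappa\mu_1+\sigma)\bigr)$ and both Cramer numerators check out, and the identity $n_{1a}^*=(1-q)\wt n_{2a}$ together with $\bar n_{2a}<\wt n_{2a}$ (which is what \eqref{virusnoncoexcond2} amounts to once \eqref{viruscoexcondq=0} forces $mv>r+v$) indeed makes the $x_{1a}$-numerator positive for every $x_3>0$, while the $x_{2a}$-numerator changes sign exactly at $x_3=n_3^*$. This buys an explicit, self-contained verification that no lower bound on $x_3$ survives, at the cost of a somewhat longer computation; the paper's version is shorter but asks the reader to re-run the sign arguments of Proposition~\ref{prop-coexeq} in the degenerate case. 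The only point neither you nor the paper pins down is nondegeneracy of the linear system when $x_3$ from \eqref{x3def} is negative (relevant only for the ``unique coordinatewise nonzero equilibrium'' clause, not for the positivity criterion), so I do not count it against you.
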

The proof of Proposition~\ref{prop-coexeqonly1virus} will be carried out in Appendix~\ref{sec-coexeq}, based on the proof of Proposition~\ref{prop-coexeq}. An interpretation of condition~\eqref{0x3n3} will be provided in Remark~\ref{remark-2novirus} after the presentation of our main results on the stochastic individual-based model.

\begin{remark}[Conjectured stability of the coexistence equilibrium for non-positive $\wt n_3$\bl]
If $(n_{1a}^*,n_{1i}^*,n_3^*)$ is not just a coordinatewise positive but an asymptotically stable equilibrium of the three-dimensional system~\eqref{3dimvirus}, then (for $\lambda_2<\lambda_1$) the condition $0<x_3<n_3^*$ is equivalent to the invasion condition~\eqref{2invades1} of type 2. In this case, assertion (1) of Proposition~\ref{prop-fixation} is valid and implies that $(n_{1a}^*,n_{1i}^*,0,0,0,n_3^*)$ is unstable,
which suggests in turn \bl that $\mathbf x$ is asymptotically stable, but we have no results in that direction, apart from numerical evidence on the negativity of the real part of the eigenvalues of the Jacobi matrix of~\eqref{6dimvirus} at $\mathbf x$ for certain concrete choices of parameters (cf.\ Section~\ref{sec-afterinvasion}). 
\end{remark}

%Recall that if type 2 also stably coexists with type 3 in absence of type 1, then there are scenarios where we know that only type 2 can invade the other host type in presence of type 3, and in these scenarios we expect fixation of type 2 (with type 3 staying in the system). 

\subsection{The case $\lambda_2>\lambda_1$ and variants of ``founder control''}\label{sec-lambda2lambda1}
%\sout{During the analysis of the present paper,}  
So far we have avoided the study of the case $\lambda_2 > \lambda_1$ since this case does not seem biologically plausible \bl (why should an additional dormancy trait increase the reproduction rate?). 
%lacking a reproductive trade-off of dormancy. 
However, we note that the proofs of all propositions of Section~\ref{sec-dynsystresultsvirus} (and also of Theorems~\ref{theorem-2invasion} and~\ref{theorem-1invasion} on the stochastic system) apply verbatim in the case $\lambda_2 > \lambda_1$ as well, and in fact a new kind of behaviour arises. \bl

We have seen that for $\lambda_2<\lambda_1$,  it is impossible \color{black} that neither host type can invade the other host type when coexisting with type 3. In a simpler model studied in \bl paper~\cite{BT20}, we called  the latter \bl situation \emph{founder control}. The term ``founder control'' is borrowed from spatial ecology, where it refers to a situation such that whichever population first establishes  itself \bl at a certain location excludes the other from future invasion, see e.g.\ \cite{V15}. It is intuitive that in case each host type stably coexists with the viruses in absence of the other host type, such a scenario should occur when  both non-invasion conditions\bl~\eqref{2doesn'tinvade1} and~\eqref{1doesn'tinvade2} hold simultaneously.
%\sout{since in this case both coupled branching processes describing  the \bl initial growth of the invader (see Sections~\ref{sec-2invasion} and~\ref{sec-1invasion}) are subcritical.} %{ \tt Point to the relevant later section?} 
 \bl In this case, we have
\[  \frac{qD n_3^* (r\kappa\mu_1-v\sigma)}{(r+v)(\kappa\mu_1+\sigma)}> \lambda_2-\lambda_1 > \frac{qD \widetilde n_3(r\kappa\mu_1-v\sigma)}{(r+v)(\kappa\mu_1+\sigma)}. \numberthis\label{foundercontrol} \]
Now, if $r\kappa\mu_1-v\sigma>0$, then it follows that $\lambda_2>\lambda_1$. Nevertheless, it can be observed in the proof of Proposition~\ref{prop-coexeq} (whose arguments are also valid for $\lambda_2>\lambda_1$, see Appendix~\ref{sec-coexeq}) that if~\eqref{foundercontrol} holds, then not only $x_3$ is positive but $\mathbf x$ is coordinatewise positive. Due to the  mutual \bl impossibility of invasions, we conjecture that $\mathbf x$ is unstable in this case (and we also have some numerical evidence on this, see Section~\ref{sssn-simulations-heuristics}).   The opposite case $r\kappa\mu_1-v\sigma <0$, by  application of  the formula for $x_3$ given in~\eqref{x3def}, yields \bl $n_3^* < x_3 < \wt n_3$  which however is \bl excluded by Proposition~\ref{prop-coexeq}. 

Moreover, in Section~\ref{sec-only1virus} we argued that if type 1 stably coexists with type 3 in absence of the other host type but type 2 does not, then both invasion directions are possible (and we expect stable six-type coexistence) if and only if $0<x_3 < n_3^*$. Now, if the same condition is satisfied for $\lambda_2>\lambda_1$ (and thus necessarily $r\kappa\mu_1-v\sigma>0$), then the proof of Proposition~\ref{prop-coexeqonly1virus} still applies. This implies that
type 2 cannot invade type 1 coexisting with type 3, and $\mathbf x$ is coordinatewise positive, and we again conjecture that it is unstable. Of course, the invasion of type 1a against type 2a will be unsuccessful with high probability since $\lambda_1<\lambda_2$. Thus, we again observe founder control since both invasions are unsuccessful with high probability, but just like in the scenario described in Section~\ref{sec-only1virus}, the invasion of type 1 is to be understood in the virus-free setting.

%On the other hand, since $\lambda_2>\lambda_1$ and $\wt n_{2a}>\bar n_{2a}$ (so that type 2 does not coexist with type 3), in absence of type 1 the virus epidemic cannot establish. Now, the arguments of Section~\ref{sec-only1virus} imply that if we put e.g.\ a finite number of type 1a and 3 individuals into the system with approximately $K\bar n_{2a}$ type 2a individuals for large $K$, then neither type 1 nor type 3 can establish. That is, here we encounter another variant of founder control, where not only the winning host type but also the fate of the virus type depends strongly on the initial condition: Type 3 stays in the system when it coexists with type 1 and type 2 tries to invade unsuccessfully, but type 3 cannot establish itself in the system when type 2a is initially resident (and neither can type 1).

\subsection{Proof of Proposition~\ref{prop-fixation}}\label{sec-fixationproof}  Throughout the proof of Proposition~\ref{prop-fixation}, we can and will assume that  neither~\eqref{1invades2} nor~\eqref{2invades1} hold with equality. 
\begin{proof}[Proof of Proposition~\ref{prop-fixation}]
To verify assertion (1), assume that $(n_{1a}^*,n_{1i}^*,n_3^*)$ is an asymptotically stable equilibrium of~\eqref{3dimvirus}. Let us now study the stability of $(n_{1a}^*,n_{1i}^*,0,0,0,n_{3}^*)$ for the full system~\eqref{6dimvirus}. Writing $n_1^*=n_{1a}^*+n_{1i}^*$, the Jacobi matrix $A(n_{1a}^*, n_{1i}^*,0,0,0,n_{3}^{*})$ of~\eqref{6dimvirus} at $(n_{1a}^*, n_{1i}^*,0,0,0,n_{3}^{*})$ is given by  
\begin{small}
\[ 
%A(n_{1a}^*, n_{1i}^*,0,0,0,n_{3}^{*}) = 
\begin{pmatrix} \lambda_1-\mu_1 - C n_{1a}^* - Cn_1^*-D n_3^* & r-C n_{1a}^* & -Cn_{1a}^* & - C n_{1a}^* & - C n_{1a}^* & - D n_{1a}^* \\ D n_3^* & -(r+v) & 0 & 0 & 0 & D n_{1a}^* \\ 0 & 0 & \lambda_2-\mu_1- C n_{1}^*-Dn_3^* & \sigma & r & 0 \\ 0 & 0 & q D n_3^* & -(\kappa\mu_1+\sigma)  & 0 & 0 \\ 0 & 0 & (1-q) D n_3^* & 0 & -(r+v) & 0 \\ - D n_3^* & mv & -(1-q) D n_3^* & 0 & mv & - D n_{1a}^* - \mu_3 \end{pmatrix}. 
\]
\end{small}
It is easy to see that the $3\times 3$ submatrix given by the 1st, 2nd, and 6th rows and columns of $A(n_{1a}^*, n_{1i}^*,0,0,0,n_{3}^{*})$ equals the Jacobi matrix of~\eqref{3dimvirus} at $(n_{1a}^*, n_{1i}^*,n_{3}^{*})$, which is assumed to have three eigenvalues with negative real parts.

Let us now consider the $3\times 3$ submatrix given by the 3rd, 4th, and 5th rows and columns of $A(n_{1a}^*, n_{1i}^*,0,0,0,n_{3}^{*})$, i.e.\ the matrix
\[ A:= \begin{pmatrix} \lambda_2-\mu_1-C(n_{1a}^*+n_{1i}^*)-Dn_3^* &  \sigma  & r  \\  q D n_3^*  & -\kappa\mu_1-\sigma & 0 \\ (1-q)Dn_3^*  & 0 & -r-v \end{pmatrix}. \numberthis\label{Adef} \]
Using the assumption that $(n_{1a}^*,n_{1i}^*,n_3^*)$ is a coordinatewise positive equilibrium of~\eqref{3dimvirus}, setting \bl the first equality of the system to zero implies that
\[ 
\lambda_1-\mu_1-C(n_{1a}^*+n_{1i}^*)-D n_3^* + r \frac{n_{1i}^*}{n_{1a}^*} = 0. 
\]
Consequently, we have
\[ 
\lambda_2-\mu_1-C(n_{1a}^*+n_{1i}^*)-Dn_3^* = \lambda_2-\lambda_1-r \frac{n_{1i}^*}{n_{1a}^*}.   
\]
Hence, using also the second line of~\eqref{6dimvirus}, 
\[ 
\begin{aligned} \det A &= \big( (\lambda_2-\lambda_1-r\frac{n_{1i}^*}{n_{1a}^*}) (-r-v)-r(1-q) D n_3^* \big) (-\kappa\mu_1-\sigma) + q D n_3^* (r+v) \sigma \\& = (\lambda_2-\lambda_1)(r+v)(\kappa\mu_1+\sigma) + q D n_3^*(v\sigma-r\kappa\mu_1).
 \end{aligned} 
 \]
Depending on the parameters, the determinant may be positive or negative. It is positive if and only if
\[ 
\lambda_2-\lambda_1 > \frac{q D n_3^*(r \kappa \mu_1-v\sigma)}{(r+v)(\kappa\mu_1+\sigma)} 
\]
holds, which is precisely the invasion condition \twoIone \ given by inequality~\eqref{2invades1}. 
It is easy to check that for the value $\lambda_2^*$ of $\lambda_2$ for which \eqref{2invades1} holds with an equality, the trace of $A$ is still negative. As the trace agrees with the sum of all eigenvalues, there must be at least one eigenvalue with a strictly negative real part. Further, 
\bl since  (given all the other parameters) $\det A$  as a function of $\lambda_2$ \color{black} depends linearly on $\lambda_2$, this eigenvalue must retain a negative real part throughout an interval \bl  $(\lambda_2^*-\eps,\lambda_2^*+\eps)$ for some $\eps>0$ small enough. \bl  We claim that there must in fact be two such eigenvalues. 

Indeed, the determinant changes sign from negative to positive precisely at $\lambda_2^*$. Hence (given that the determinant equals the product of the eigenvalues), the only other option that we should exclude is that one eigenvalue is negative in an open neighbourhood of $\lambda_2^*$, one positive, and the third one switches from positive to negative at $\lambda_2^*$. Now, since the off-diagonal entries of $A$ are non-negative, for sufficiently large $\varrho>0$ the matrix $A+\varrho I$ has non-negative entries (where $I$ denotes the $3\times 3$ identity matrix). Then, thanks to the Perron--Frobenius theorem, $A+\varrho I$ has a positive real eigenvalue $\widehat \lambda$  whose absolute value equals the spectral radius of the matrix, i.e.\ this absolute value is maximal among all eigenvalues. It follows that the eigenvalue of $A$ with the largest real part is also real, namely it is $\widehat \lambda-\varrho$. This excludes the case that there are values of $\lambda_2$ such that $A$ has two complex conjugate eigenvalues with positive real parts and the third eigenvalue is a negative real number. 
Now, assume for a contradiction that for some value $\lambda_2$ below $\lambda_2^*$ the matrix $A$ has two positive eigenvalues and one negative one. Then, for such $\lambda_2$, the two eigenvalues with positive real parts must always be real, and they cannot change sign because the determinant cannot vanish anywhere else but at the critical point $\lambda_2^*$. This is however a problem because for $\lambda_2$ very small, e.g.\ for $\lambda_2<\mu_1$, the matrix $A$ must have three eigenvalues with negative real parts.

%(having a pair of complex eigenvalues with a positive real part in an open neighbourhood of $\lambda_2^*$ are also not possible because then the trace could not be negative in the whole interval)

Consequently, when the strict reverse inequality of \eqref{2invades1} holds, all eigenvalues of $A$ have a negative real part. Therefore, the determinant of $A$ is positive (i.e.\ \eqref{2invades1} holds) if and only if $A$ has a positive eigenvalue. %, and \bl the branching process is supercritical. 
Moreover, the determinant of $A$ is negative, i.e.\ 
\[ 
\lambda_2-\lambda_1 < \frac{q D n_3^*(r \kappa \mu_1-v\sigma)}{(r+v)(\kappa\mu_1+\sigma)}, 
\]
and the reverse invasion condition \oneItwo\ of~\eqref{1invades2} holds, if and only if all eigenvalues of $A$ have negative real parts.%, so that the branching process is subcritical.

%Moreover, we identify the $3\times 3$ submatrix given by the 3rd, 4th, and 5th rows and columns of $A(n_{1a}^*, n_{1i}^*,0,0,0,n_{3}^{*})$ as the transpose of the mean matrix $J^*$ of the \twoIone-branching process defined in~\eqref{J*def}. Given that we proved in Section~\ref{sec-2invasion} that $J^*$ always has two eigenvalues with negative real parts and a third one that is real and negative if and only if $\lambda_2-\lambda_1 < \frac{q D n_3^*(r\kappa\mu_1-v\sigma)}{(r+v)(\kappa\mu_1+\sigma)}$ holds and positive if and only if the strict opposite inequality~\eqref{2invades1} holds, 

Given this, it suffices to show that the characteristic polynomial of $A(n_{1a}^*, n_{1i}^*,0,0,0,n_{3}^{*})$ equals the characteristic polynomial of the block diagonal matrix given by these two $3 \times 3$ block given by the 1st, 2nd, and 6th rows and columns of $A(n_{1a}^*, n_{1i}^*,0,0,0,n_{3}^{*})$ and the matrix $A$. But the latter assertion is clear from Laplace's expansion theorem. Hence, we have proven assertion (1).

To prove assertion (2), assume that $(\wt n_{2a},\wt n_{2d},\wt n_{2i},\wt n_3)$ is an asymptotically stable equilibrium of~\eqref{4dimvirus}. Let us now analyse the stability of $(0,0,\wt n_{2a},\wt n_{2d},\wt n_{2i},\wt n_3)$ for the full system~\eqref{6dimvirus}. Writing $\wt n_{2} = \wt n_{2a} + \wt n_{2d} + \wt n_{2i}$, the Jacobi matrix $A(0,0,\wt n_{2a}, \wt n_{2d}, \wt n_{2i}, \wt n_3)$ of~\eqref{6dimvirus} at $(0,0,\wt n_{2a}, \wt n_{2d}, \wt n_{2i}, \wt n_3)$ is given by 
\begin{footnotesize}
\begin{align*}
%A(0,0,\wt n_{2a}, \wt n_{2d}, \wt n_{2i}, \wt n_3) \\ = 
\begin{pmatrix} \lambda_1-\mu_1 - C \wt n_2 - D \wt n_3 & r & 0 & 0 & 0 & 0 \\ D \wt n_3 & -(r+v) & 0 & 0 & 0 & 0 \\ - C \wt n_{2a} & -C\wt n_{2a} & \lambda_2-\mu_1-C \wt n_2 - C \wt n_{2a} - D \wt n_3 & \sigma - C \wt n_{2a} & r-C \wt n_{2a} & - D \wt n_{2a} \\ 0 & 0 & q D \wt n_{3} & -(\kappa\mu_1+\sigma) & 0 & q D \wt n_{2a} \\ 0 & 0 & (1-q)D\wt n_3 & 0 & -(r+v) & (1-q) D \wt n_{2a} \\ -D\wt n_3 & mv & -(1-q) D \wt n_3 & 0 & mv & -(1-q)D\wt n_{2a} - \mu_3 \end{pmatrix}. 
\end{align*}
\end{footnotesize}
The last $4 \times 4$ block of this matrix (given by the 3rd to 6th rows and columns) is the Jacobi matrix of~\eqref{4dimvirus} at $(\wt n_{2a}, \wt n_{2d}, \wt n_{2i}, \wt n_3)$, which is assumed to have four eigenvalues with negative real parts. Further, the first $2 \times 2$ block (given by the first two rows and columns) reads
\[ F: = \begin{pmatrix}  \lambda_1-\mu_1 - C \wt n_2 - D \wt n_3 & r \\ D \wt n_3 & -(r+v) \end{pmatrix} . \numberthis\label{Fdef} \]
Since $(\widetilde n_{2a},\widetilde n_{2d},\widetilde n_{2i},\tilden3)$ is assumed to be an equilibrium of \eqref{4dimvirus} with four positive coordinates, the first entry of the first row of $F$ equals
\[  \lambda_1-\mu_1-C(\widetilde n_{2a}+\widetilde n_{2d}+\widetilde n_{2i})-D\tilden3  = \lambda_1-\lambda_2 -\sigma \frac{\widetilde n_{2d}}{\widetilde n_{2a}} - r \frac{\widetilde n_{2i}}{\widetilde n_{2a}}, \]
while we have
\[ \widetilde n_{2d} = \frac{qD\widetilde n_{2a}\tilden3}{\kappa\mu_1+\sigma}, \qquad \widetilde n_{2i} = \frac{(1-q) D\widetilde n_{2a}\tilden3}{r+v} , \]
so that we obtain
\[ 
\det F = ( \lambda_2-\lambda_1) (r+v) + D\tilden3 \Big( \frac{q\sigma(r+v)}{\kappa\mu_1+\sigma} + (1-q)r - r \Big) = (\lambda_2-\lambda_1)(r+v) + q D \tilden3 \frac{v\sigma - r \kappa\mu_1}{\kappa\mu_1+\sigma}. 
\]
Hence, $F$ has a positive (real) eigenvalue if and only if
\[ 
\lambda_2-\lambda_1 < \frac{q D\tilden3(r\kappa\mu_1-v\sigma)}{(r+v)(\kappa\mu_1+\sigma)},  
\]
which is precisely the invasion condition \oneItwo \ given by inequality~\eqref{1invades2}. 
Indeed, since (given all other parameters) $\det F$ as a function of $\lambda_2$ \color{black} depends linearly on $\lambda_2$, it has a unique zero locus. Similarly to the case of the matrix $A$ in Section~\ref{sec-2invasion}, the eigenvalue of $F$ with the largest real part is always real thanks to the Perron--Frobenius theorem, and thus both eigenvalues have to be real. Now, if the strict reverse inequality of~\eqref{1invades2} holds, then the determinant is positive, which  by definition \color{black} implies that the first entry of the first row of $F$ is negative. Hence, the trace of $F$ is negative, and since the trace equals the sum of the eigenvalues, it follows that both eigenvalues must be negative. Consequently, when~\eqref{1invades2} holds, then one eigenvalue (namely $\wt \lambda$) is positive and the other one is negative.

Therefore, assertion (2) holds due to the fact that the characteristic polynomial of $
A(0,0,\wt n_{2a}, \wt n_{2d}, \wt n_{2i}, \wt n_3)$ equals the characteristic polynomial of the block diagonal matrix given by the aforementioned $4\times 4$ block and the aforementioned $2\times 2$ block and $F$ zeroes everywhere else, which again follows from Laplace's expansion theorem.
\end{proof}

\section{Main results on the stochastic individual-based model}\label{sec-modeldefresultsvirus} 
In Section~\ref{sec-12invasion}, we state and interpret Theorem~\ref{theorem-2invasion} on the invasion of type 2 against type 1 
%\gb while \bl stably coexisting with type 3 
(in Section~\ref{sec-2invades1}) and Theorem~\ref{theorem-1invasion} on the reverse invasion direction (in Section~\ref{sec-otherinvasion}). One main ingredient of the proof of these theorems is the analysis of the branching processes approximating the respective (mutant) invader host types, which is carried out in Section~\ref{sec-branchingprocess}. Using this, in Section~\ref{sec-proofsketch} we sketch the proof of the two theorems, which will be completed in Appendix~\ref{sec-proofdetails}.

%The \bl existence of a coordinatewise positive equilibrium of the 6-dimensional dynamical system~\eqref{6dimvirus} and the stability of the equilibria $(n_{1a}^*,n_{1i}^*,0,0,0,n_3^*)$ and $(0,0,\wt n_{2a},\wt n_{2d}, \wt n_{2i}, \wt n_3)$  are investigated in Section~\ref{sec-dynsyst}. \bl In Section~\ref{sec-only1virus} we comment on the case where only type 1 coexists with type 3 in absence of the other host type, while type 2 does not, and we also determine  whether \bl the system~\eqref{6dimvirus} has a coordinatewise positive equilibrium in this case.  Finally, a \bl short overview of the case $\lambda_2>\lambda_1$ can be found in Section~\ref{sec-lambda2lambda1}.

\subsection{Statement of results}\label{sec-12invasion}
\subsubsection{Invasion of type 2 against type 1  while \bl coexisting with type 3}\label{sec-2invades1}
Recall that $K>0$ is our carrying capacity,  $\mathbf{N}$ resp.\ $\mathbf{N}^K$ are our stochastic population models \bl and  $\mathcal T=\{ 1a,1i,2a,2d,2i,3\}$ is the type space. 
 Our starting point is a situation where type 1 is in stable coexistence with type upon arrival of a single new invader of type 2a. Thus, we assume that $(n_{1a}^*,n_{1i}^*,n_3^*)$ is asymptotically stable, and that
\[ 
\mathbf M^*_K=\big((M_{1a,K}^*,M_{1i,K}^*,1/K,0,0\color{black},M_{3,K}^*)\big)_{K>0} \numberthis\label{M*def} 
\] 
is a generic family of initial conditions for $\mathbf{N}^K$ such that $ (M_{1a,K}^*,M_{1i,K}^*,M_{3,K}^*) \in (\frac{1}{K}\N)^3$ for all $K>0$ and  
$$
\lim_{K\to\infty} (M_{1a,K}^*,M_{1i,K}^*,M_{3,K}^*)= (n_{1a}^*,n_{1i}^*,n_3^*)
$$ 
in probability. 
%\color{red}~removed the assumption that the initial condition is deterministic. And in the virus1 paper we required a.s.\ convergence, but conv.\ in probab.\ is enough (see homogamy paper)\color{black}. 
Now, consider the $\mathbf{N}^K$-stopping time 
\[ 
T_\beta := \inf \big\{ t \geq 0 \colon N_{j}^K(t)>\beta,~\forall j \in \mathcal T \big\},\numberthis\label{Tbetadef} 
\]
which is the first time that all sub-populations in $\mathbf{N}^K$ have size at least $\beta$ for some $\beta>0$. In this case, we say that all types are {\em macroscopic} or {\em visible} on the scale of the carrying capacity $K$. \bl 
%If $T_\beta<\infty$ holds starting from an initial condition $\mathbf N(0)$ satisfying $(N_{1a}^K(0),N_{1i}^K(0)N_3^K(0)) \approx (n_{1a}^*,n_{1i}^*,n_{3}^*)$ (see Theorem~\ref{theorem-2invasion} below for a more precise formulation) and $N_{2a}(0)=1$, $N_{2d}(0)=N_{2i}(0)=0$, for large enough $K$ one can consider the invasion of type 2a successful. 
 Further, \bl for $\eps \geq 0$ and  a subset of types \bl $A \subseteq \mathcal T$ we define the stopping time
\[ 
T_\eps^{A} = \inf \bigg\{ t \geq 0 \colon \sum_{j\in A}  N^K_{j}(t) = \frac{\lfloor \eps K \rfloor}{K} \bl \bigg\}. \numberthis\label{Tepsdef} 
\]
In particular, $T_0^A$ is the extinction time of all types  of $\mathbf{N}^K$ \bl in $A \subseteq \mathcal T$.  For convenience, we \bl abbreviate 
$$
T_\eps^{1} := T_\eps^{\{ 1a,1i\}} \quad \mbox{ and } \quad T_\eps^2:=T_\eps^{\{ 2a, 2d, 2i\}}.
$$
%, and $T_\eps^3 =T_\eps^{\{ 3 \}}$
The next theorem states conditions  that ensure \bl that the event $\{ T_\beta < T_0^2 \}$ has an asymptotically positive probability, describes the limit of this probability, the growth rate of $T_\beta$ on the $\log K$ time scale on the event of a successful invasion, and states that unsuccessful invasions typically take an amount of time that is sub-logarithmic in $K$.

\begin{theorem}[Invasion of type 2 against type 1  while \bl coexisting with type 3]\label{theorem-2invasion} 
Assume that  $(n_{1a}^*,n_{1i}^*,n_3^*)$ is a coordinatewise positive (so that \eqref{viruscoexcondq=0} holds, which implies $mv>r+v$) and asymptotically stable equilibrium of~\eqref{3dimvirus}, and that $\lambda_2-\lambda_1 \neq \frac{qDn_3^* (r\kappa\mu_1-v\sigma)}{(r+v)(\kappa\mu_1+\sigma)}$. %and also that $\lambda_2-\lambda_1 \neq \frac{qDn_3^* (r\kappa\mu_1-v\sigma)}{(r+v)(\kappa\mu_1+\sigma)}$,
Then, we have for all sufficiently small $\beta>0$ that 
\[ 
\lim_{K\to\infty} \P\Big( T_\beta < T_0^{2}\,  \Big| \, \mathbf N^K(0) = \mathbf M^*_K \Big) = 1-s_{2a}, \numberthis\label{2invasion} 
\]
where the number $s_{2a} \in (0,1]$ is uniquely characterized as the first coordinate of the coordinatewise smallest positive solution of the system of equations~\eqref{s2as2ds2i} below. 
 In particular, 
\begin{enumerate}[(I)]
\item  $s_{2a} = 1$ holds if the~\twoNIone\ condition~\eqref{2doesn'tinvade1} holds,
\item whereas $0< s_{2a}<1$ holds if the~\twoIone\ condition~\eqref{2invades1} holds.
In this case, conditional on the event $\{ T_\beta < T_0^2 \}$ we have
\[ \lim_{K\to\infty} \frac{T_\beta}{\log K } = \frac{1}{\lambda^*} \qquad \text{ in probability}, \numberthis\label{lambda*conv} \] where $\lambda^*$ is the largest eigenvalue of the matrix $J^*$ defined in~\eqref{J*def} below (which is positive if~\eqref{2invades1} holds).
\end{enumerate}
Finally, in both cases, conditional on the event $\{ T_0^2 < T_\beta \}$, we have
\[ \lim_{K\to\infty} \frac{T_0^2}{\log K } = 0 \qquad \text{ in probability}. \numberthis\label{sublog} \]
\end{theorem}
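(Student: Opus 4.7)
The plan is to execute the standard two-phase analysis of stochastic adaptive dynamics, following~\cite{C+19} and the prior works~\cite{BT19,BT20,BT21}. The first phase controls the invader $(N_{2a}^K,N_{2d}^K,N_{2i}^K)$ while it is still microscopic (of size $o(K)$) by coupling with a multi-type continuous-time branching process whose mean matrix $J^*$ coincides with the $3\times 3$ matrix $A$ from~\eqref{Adef}. The second phase uses a functional law of large numbers to approximate the full six-type process by solutions of~\eqref{6dimvirus} once type 2 has reached a density of order $K$, and to push the trajectory into the target region $\{n_j>\beta,\,\forall j\}$.

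For Phase~I I would fix a small $\eta>0$ and first show that, as long as the rescaled total type-2 density stays below $\eta$, the resident coordinates $(N_{1a}^K,N_{1i}^K,N_3^K)$ remain within an $\eta$-neighbourhood of $(n_{1a}^*,n_{1i}^*,n_3^*)$ with probability tending to $1$ as $K\to\infty$. This combines the asymptotic stability of the coexistence equilibrium of~\eqref{3dimvirus} with a coupling between the full dynamics and the three-dimensional type-2-free one, with errors controlled by $\eta$. In this regime $(N_{2a}^K,N_{2d}^K,N_{2i}^K)$ can be sandwiched between three-type continuous-time branching processes $Z^{\pm,\eta}$ obtained by replacing $n_{1a}^*,n_{1i}^*,n_3^*$ in the transitions by $(1\mp\eta)$-perturbations. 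By standard Athreya--Ney theory, the $2a$-extinction probabilities of $Z^{\pm,\eta}$ are the first coordinates of the coordinatewise smallest positive solutions of the associated fixed-point systems; taking $\eta\downarrow 0$ identifies their common limit with $s_{2a}$ solving~\eqref{s2as2ds2i}. Proposition~\ref{prop-fixation} applied to $A=J^*$ then shows that its Perron eigenvalue $\lambda^*$ is strictly positive under~\eqref{2invades1} (yielding $0<s_{2a}<1$) and non-positive under~\eqref{2doesn'tinvade1} (yielding $s_{2a}=1$). On the survival event, the Kesten--Stigum theorem gives the hitting time of level $\lfloor\eta K\rfloor$ as $\log K/\lambda^* + O_\P(1)$; on the extinction event, the classical fact that a branching process conditioned on extinction dies out in time bounded in probability yields~\eqref{sublog}.

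For Phase~II, once $N_2^K/K$ first reaches $\eta$ I would invoke the functional LLN on a deterministic horizon $[0,t_\beta]$ to couple $\mathbf N^K$ with a solution of~\eqref{6dimvirus} starting near a point of the form $(n_{1a}^*,n_{1i}^*,\eta',\eta'',\eta''',n_3^*)$ with strictly positive type-2 components. Under~\eqref{2invades1}, Proposition~\ref{prop-fixation} guarantees that $(n_{1a}^*,n_{1i}^*,0,0,0,n_3^*)$ is hyperbolically unstable, and the Perron--Frobenius structure of $A=J^*$ implies that the corresponding unstable eigenvector has strictly positive type-2 components. Combining the unstable-manifold theorem with continuous dependence on initial data, the trajectory must leave an $\eta$-neighbourhood of the unstable equilibrium in finite deterministic time and, for $\beta$ small enough, reach the region $\{n_j>\beta,\,\forall j\}$ at a finite time $t_\beta$; transferring back via the functional LLN yields $T_\beta = \log K/\lambda^* + O_\P(1)$ on survival.

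The hard part will be this very last step of Phase~II. Unlike in classical adaptive-dynamics invasion arguments, where the invader is attracted to a new stable coexistence equilibrium, here~\eqref{6dimvirus} is not known to admit such a global attractor (full-coexistence stability is only conjectural, see Section~\ref{sec-afterinvasion}), so I cannot conclude by letting the ODE relax to a fixed point. Instead a purely local argument is needed: trajectories starting arbitrarily close to the unstable equilibrium, with strictly positive type-2 components, must have all six coordinates bounded below by some fixed $\beta>0$ within a deterministic time. This calls on the positivity of the Perron eigenvector, on the fact that the type-1 and virus coordinates cannot vanish on finite intervals because the corresponding ODE components contain strictly positive production terms, and on uniformity in $\eta$ of both the branching-process sandwich and the LLN error bounds, so that the limits $K\to\infty$ and $\eta\downarrow 0$ can be taken in the correct order. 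These technicalities would be deferred to Appendix~\ref{sec-proofdetails}.
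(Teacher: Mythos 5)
Your proposal follows essentially the same two-phase strategy as the paper: a sandwich coupling of the microscopic type-2 population between three-type branching processes whose mean matrices converge to $J^*=A^\top$ (giving $s_{2a}$, $\lambda^*$ and the sub-logarithmic extinction time), followed by a purely local linearization argument near the unstable equilibrium — Perron--Frobenius positivity of the unstable eigenvector plus the fact that the resident coordinates start at a positive equilibrium and can decay at most exponentially — to push all six coordinates above a fixed $\beta$ in deterministic time, exactly the workaround the paper uses in Proposition~\ref{prop-2dynsyst} in lieu of any global attractor for~\eqref{6dimvirus}. The one point your sketch is thin on is the control of the residents over the $\Theta(\log K)$ duration of Phase~I: asymptotic stability of $(n_{1a}^*,n_{1i}^*,n_3^*)$ combined with the functional LLN only works on bounded time horizons, and the paper (following~\cite{C+19}) instead needs a Freidlin--Wentzell exit-time estimate of order $\e^{KV}$ together with a Dynkin-formula bound $\E[T_0^2\wedge T_\eps^2\wedge R_{b\eps}]=O(\eps K)$ to cover the whole of Phase~I.
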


 Under the above conditions, we say \bl 
%In case $(n_{1a}^*,n_{1i}^*,n_3^*)$ is a locally asymptotically stable equilibrium of~\eqref{3dimvirus}, we will 
hat \emph{type 2 can invade type 1 while \bl coexisting with type 3} (or for short, \emph{type 2 can invade type 1}) if $s_{2a}<1$,  which is true when the invasion condition \twoIone\ holds. \bl  %Let us provide a brief interpretation for the corresponding condition~\eqref{2invades1}; further \gb information \bl will be provided in Section~\ref{sec-afterinvasion} based on additional quantities defined below. 
The proof of Theorem~\ref{theorem-2invasion} will be sketched in Section~\ref{sec-proofsketch} and carried out in full detail in Appendix~\ref{sec-proofdetails}. 

%{\tt \gb Die folgenden beiden Remarks sind deutlich überarbeitet.}

\begin{remark}[Interpretation of the invasion condition  \twoIone \ \bl from~\eqref{2invades1}\bl]
    \label{remark-rkappamu1-vsigma}
 For \bl $\lambda_2<\lambda_1$, we see that  condition~\eqref{2invades1} can only be satisfied  when $v\sigma-r\kappa\mu_1$ is positive.
%, i.e.\
%$\frac{r}{r+v} < \frac{\sigma}{\kappa\mu_1+\sigma}$,
Since we have 
$$
\frac{v\sigma-r\kappa\mu_1}{(r+v)(\kappa\mu_1+\sigma)} =  \frac{\sigma}{\kappa\mu_1+\sigma}-\frac{r}{r+v},
$$ 
%{\tt \color{red} Something is wrong here, divide l.h.s.\ by $(r+v)(\kappa\mu_1+\sigma)$?} {\tt \gb Yes, sure! Sorry. \bl}
(which also \bl holds when $\kappa$ is zero),  this means that the probability $\frac{\sigma}{\kappa\mu_1+\sigma}$ that a dormant individual resuscitates before dying needs to be higher than the probability $\frac{r}{r+v}$ that an infected individual recovers before dying by lysis.
%probability $\frac{r}{r+v}$ that an infected individual recovers before dying by lysis \gb needs to be \bl lower than the . 
Altogether, if dormancy comes with a reproductive trade-off, the fitness difference $\lambda_1-\lambda_2$ needs to be smaller than the net dormancy-based escape rate of individuals from lethal infection when the virus population size is at its equilibrium. \bl 
%%via dormancy and resuscitation from $a successful invasion of type 2 with asymptotically positive probability requires the next-step death probability of dormant individuals to be less than the same for active individuals.
%This shows that a type 2a mutant originating from the type 1a population, who has the ability to switch to a dormant state in order to curb the epidemic at the price of a lower reproduction rate, can only invade if the virus infection is actually present in the system, and only if $q>0$. 

%\sout{Note also that whether the coexistence condition~\eqref{virusnoncoexcondq=0} resp.\ \eqref{viruscoexcond2} of one host type and the virus type holds depends only on the parameter $q$ but not on the other dormancy-related parameters $\kappa$ and $\sigma$. Thus, in order to see a substantial effect of the latter two parameters, one needs two host types.}
\end{remark}

\begin{remark}[Interpretation of $s_{2a}$ and $\lambda^*$]
    \label{remark-interpretation_s2_lambda_star}
 The \bl quantities $s_{2a}$ and $\lambda^*$  can be computed explicitly. \bl 
%are obtained from an analysis of the stochastic fluctuations of our system. 
Indeed, they  depend on the mean matrix of a coupled three-type branching process describing the type 2a, 2d, and 2i population during the initial stochastic phase of the invasion. \bl This branching process will be \bl subcritical if \eqref{2doesn'tinvade1} and supercritical if~\eqref{2invades1} holds.  Now, \bl $s_{2a}$  is precisely the {\em survival probability} of the branching process when started with  a single \bl type 2a individual.
%and no type 2d or 2i individuals. 
Moreover, $\lambda^*>0$ is the {\em largest eigenvalue of the mean matrix} of the branching process in the supercritical case. In Section~\ref{sec-2invasion} we will introduce this process formally.
%, which is an important ingredient of the proof of the theorem. 
The reason for the exclusion of equality in~\eqref{2doesn'tinvade1} in Theorem~\ref{theorem-2invasion} is due to the fact that in this case the branching process is critical, which poses technical challenges.
\end{remark}

\begin{remark}[No invasion of a costly dormancy trait in the absence of a persistent virus epidemic] \label{remark-virus-needed} \hfill
    Theorem~\ref{theorem-2invasion} states that there is a non-trivial parameter regime where type 2 can invade type 1  while \bl coexisting with type 3.  This is clearly \bl not possible in the absence of the  virus. \bl 
    %type 2a was not able to invade type 1a for $\lambda_2<\lambda_1$ in absence of the virus epidemic. 
    Indeed, the sub-system of the six-dimensional dynamical system~\eqref{6dimvirus} corresponding to the virus-free situation $n_{1i}(0)=n_{2d}(0)=n_{2i}(0)=n_3(0)$ is the two-dimensional competitive Lotka--Volterra system~\eqref{2dimLV} where competition is symmetric.
    %\[
    %\begin{aligned}
    %\dot n_{1a}(t)&=n_{1a}(t)(\lambda_1-\mu_1-C(n_{1a}(t)+n_{2a}(t)), \\
    %\dot n_{2a}(t)&=n_{2a}(t)(\lambda_2-\mu_1-C(n_{1a}(t)+n_{2a}(t)) 
    %\end{aligned} \numberthis\label{2dimLV}
    %\]
    It is well-known (see e.g.\ \cite{Istas}) that for $\lambda_2<\lambda_1$, whenever $n_{1a}(0)$ and $n_{1i}(0)$ are positive, $(n_{1a}(t),n_{2a}(t))$ tends to $(\bar n_{1a},0)$ as $t\to\infty$. 
\end{remark}

\subsubsection{Invasion of type 1 against type 2  while \bl coexisting with type 3}\label{sec-otherinvasion}

%To treat the reverse invasion direction, i.e.\ when types 2a, 2d, 2i, and 3 are initially in equilibrium and a type 1a individual appears at time 0, we need to introduce some additional notation. The dynamical system corresponding to types 2a, 2d, 2i, and 3 only is 
%---------------------
%We will again only consider the case when $(\wt n_{2a}, \wt n_{2d}, \wt n_{2i}, \wt n_{3})$ is (at least locally) asymptotically stable. In fact, it is straightforward to derive an analogue of Theorem~\ref{theorem-2invasion} for the reverse invasion direction, i.e.\ for the case when $(N_{2a}^K(0),N_{2d}^K(0),N_{2i}^K(0),N_3^K(0))$ is close to $(\wt n_{2a}, \wt n_{2d}, \wt n_{2i}, \wt n_3)$, while $N_{1a}^K(0)=1$ and $N_{1i}^K(0)=0$. %Since this invasion direction is not the main focus of the paper, we will only present it in Appendix~\ref{sec-1invasionappendix}. 
%Its proof is analogous to the one of Theorem~\ref{theorem-2invasion} (which can be found in Section~\ref{sec-proofsketch}). Hence, we omit it the full argument and only describe the approximating branching process corresponding to this invasion direction, see Section~\ref{sec-1invasion}. 

We now present Theorem~\ref{theorem-1invasion}, the analogue of Theorem~\ref{theorem-2invasion} for the reverse invasion direction. In case $(\wt n_{2a}, \wt n_{2d}, \wt n_{2i}, \wt n_{3})$ is an asymptotically stable equilibrium of~\eqref{4dimvirus}, $\wt{\mathbf M}_K=((1/K,0,\color{black} \wt M_{2a,K},\wt M_{2d,K},\wt M_{2i,K},\wt M_{3,K}))_{K>0}$ will denote a generic family of initial conditions such that $(\wt M_{2a,K},\wt M_{2d,K},\wt M_{2i,K},\wt M_{3,K}) \in (\frac{1}{K}\N)^3$ for all $K>0$ and  
$$
\lim_{K\to\infty} (\wt M_{2a,K}, \wt M_{2d,K}, \wt M_{2i,K},\wt M_{3,K})= (\widetilde n_{2a},\widetilde n_{2d}, \widetilde n_{2i},\widetilde n_3)
$$ 
in probability.

\begin{theorem}[Invasion of type 1 against type 2 coexisting with type 3]
    \label{theorem-1invasion}
    Assume $(\wt n_{2a}, \wt n_{2d}, \wt n_{2i}, \wt n_{3})$ to be a coordinatewise positive and asymptotically stable equilibrium of~\eqref{4dimvirus} (so that \eqref{viruscoexcond2} holds, which implies $mv>r+v$),
    %, and also that $\lambda_2-\lambda_1 \neq \frac{qD \wt n_3 (r\kappa\mu_1-v\sigma)}{(r+v)(\kappa\mu_1+\sigma)}$
    and $\lambda_2 - \lambda_1 \neq \frac{qD \wt n_3 (r\kappa\mu_1-v\sigma)}{(r+v)(\kappa\mu_1+\sigma)}$. Then we have for all sufficiently small $\beta>0$ that 
    \[ 
    \lim_{K\to\infty} \P\Big( T_\beta < T_0^{1} \, \Big| \, \mathbf N^K(0) = \widetilde{\mathbf M}_K \Big) = 1-s_{1a}, \numberthis\label{1invasion}
    \]
    where the number $s_{1a} \in (0,1]$ is uniquely characterized as the first coordinate of the %coordinatewise 
    smallest positive solution of the system of equations~\eqref{s1as1i} below. Here,
    \begin{enumerate}[(I)]
    \item $s_{1a} = 1$ holds if the \oneNItwo\ condition~\eqref{1doesn'tinvade2} holds,
    \item whereas $s_{1a}<1$ holds if the \oneItwo\ condition~\eqref{1invades2} holds.
    In this case, conditional on the event $\{ T_\beta < T_0^1 \}$ we have
    \[ \lim_{K\to\infty} \frac{T_\beta}{\log K } = \frac{1}{\wt \lambda} \qquad \text{ in probability}, \] where $\widetilde \lambda$ is the largest eigenvalue of the matrix $\widetilde J$ defined in~\eqref{Jtildedef} below (which is positive if~\eqref{1invades2} holds).
    \end{enumerate}
    Finally, in both cases, conditional on the event $\{ T_0^1 < T_\beta \}$, we have
    \[ \lim_{K\to\infty} \frac{T_0^1}{\log K } = 0 \qquad \text{ in probability}.        \numberthis\label{lambdatildeconv} \]
\end{theorem}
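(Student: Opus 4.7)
The plan is to adapt the strategy used in the proof of Theorem~\ref{theorem-2invasion} to the reverse invasion direction, with type~1 (with only the two sub-types 1a and 1i) now playing the role of the mutant while types~2 and~3 play the role of the resident ecology. As in the standard stochastic adaptive-dynamics paradigm of \cite{C06}, I would split the analysis into an early stochastic phase, in which the type~1 population either goes extinct or reaches a small but macroscopic level of order $\varepsilon K$, and a subsequent deterministic phase in which a functional law of large numbers reduces the dynamics to the six-dimensional ODE~\eqref{6dimvirus}. The only structural difference with Theorem~\ref{theorem-2invasion} is that the approximating branching process is now two-dimensional rather than three-dimensional, which in fact simplifies the combinatorics slightly.

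In the stochastic phase, while $(N_{2a}^K,N_{2d}^K,N_{2i}^K,N_3^K)$ stays close to the asymptotically stable equilibrium $(\widetilde n_{2a},\widetilde n_{2d},\widetilde n_{2i},\widetilde n_3)$ of~\eqref{4dimvirus}, the pair $(N_{1a}^K,N_{1i}^K)$ behaves to leading order like a two-type continuous-time branching process whose infinitesimal mean matrix is
\[ \widetilde J \;=\; \begin{pmatrix} \lambda_1-\mu_1-C\widetilde n_2-D\widetilde n_3 & D\widetilde n_3 \\ r & -(r+v) \end{pmatrix}, \qquad \widetilde n_2 := \widetilde n_{2a}+\widetilde n_{2d}+\widetilde n_{2i}, \]
whose off-diagonal entries encode virus contact ($1a\to 1i$ at rate $D\widetilde n_3$) and recovery ($1i\to 1a$ at rate $r$). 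This matrix is the transpose of $F$ from~\eqref{Fdef} and hence has the same spectrum, so the Perron--Frobenius plus determinant computation already performed in the proof of Proposition~\ref{prop-fixation}(2) shows that $\widetilde \lambda$ is real and strictly positive precisely under the invasion condition~\eqref{1invades2}. The survival probability $1-s_{1a}$ of this branching process started from a single 1a individual is then the first coordinate of the smallest positive solution of the associated extinction-generating-function fixed point, i.e.\ of the system~\eqref{s1as1i}. Coupling $(N_{1a}^K,N_{1i}^K)$ to two sandwich branching processes whose mean matrices differ from $\widetilde J$ by $o(1)$, as in \cite{C+19,BT19,BT20,BT21}, yields~\eqref{1invasion}; on the survival event the Kesten--Stigum theorem produces exponential growth at rate $\widetilde \lambda$, so the type~1 population reaches level $\varepsilon K$ at time $(1+o(1))(\log K)/\widetilde \lambda$.

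For the deterministic phase, once $N_1^K\ge\varepsilon$ the functional law of large numbers \cite[Thm.\ 11.2.1]{EK} lets one track $\mathbf N^K$ by the solution of~\eqref{6dimvirus} on compact time windows. By Proposition~\ref{prop-fixation}(2) the hypothesis~\eqref{1invades2} makes $(0,0,\widetilde n_{2a},\widetilde n_{2d},\widetilde n_{2i},\widetilde n_3)$ hyperbolically unstable, so any ODE trajectory with all six coordinates initially positive and close to this equilibrium leaves a prescribed neighbourhood along the unstable manifold in $O(1)$ time; a compactness/connectedness argument along the resulting flow then shows that all six coordinates eventually exceed a common threshold $\beta>0$ for $\beta$ small enough, so that $T_\beta-T_\varepsilon^{1}$ stays bounded in probability as $K\to\infty$. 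On the complementary event $\{T_0^1<T_\beta\}$ the coupling to the (possibly supercritical but conditioned-on-extinction) branching process shows that $T_0^1$ itself is $O_{\P}(1)$, which gives~\eqref{lambdatildeconv}.

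The main technical obstacle, as already for Theorem~\ref{theorem-2invasion}, will be to maintain the sandwich branching-process coupling over the full stochastic phase, whose duration grows like $\log K$. Because each newly produced 1i individual consumes one virion and each 1a contributes to the competition felt by the resident 2-population, the resident coordinates drift as the invader grows, so one needs carefully designed stopping-time estimates showing that the invader remains $o(K)$ until the very end of the branching phase and that the induced perturbation of $\widetilde J$ stays $o(1)$ throughout. The appropriate mechanism is the exit-time construction of \cite{C+19}, already adapted to virus--host infection dynamics in \cite{BT21}; once the coupling is under control, everything else reduces to standard local hyperbolic-equilibrium analysis, for which Proposition~\ref{prop-fixation}(2) supplies precisely the required spectral information.
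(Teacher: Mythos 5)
Your proposal is correct and follows essentially the same route as the paper, which itself only proves Theorem~\ref{theorem-2invasion} in detail and notes that Theorem~\ref{theorem-1invasion} is obtained by swapping the roles of resident and invader: Freidlin--Wentzell control of the resident coordinates near $(\wt n_{2a},\wt n_{2d},\wt n_{2i},\wt n_3)$, a sandwich coupling of $(N_{1a},N_{1i})$ between two-type branching processes with mean matrices close to $\wt J=F^{\top}$ (whose spectral analysis is exactly Proposition~\ref{prop-fixation}(2)), and an emergence phase handled by comparison with the linearized system plus the law of large numbers rather than by global convergence of~\eqref{6dimvirus}. Your closing remarks on the coupling over the $\log K$ window and on the $O_{\P}(1)$ extinction time match the paper's Lemma~\ref{lemma-2residentsstay} and the sublogarithmic bound~\eqref{lambdatildeconv}.
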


Similarly to the case of the invasion of type 2, the quantity $s_{1a}$ is the extinction probability of the branching process approximating types 1a and 1i in the initial phase of the invasion, while $\wt \lambda>0$ is the positive largest eigenvalue of the mean matrix of the branching process whenever it is supercritical, see Section~\ref{sec-1invasion} for details. The assertion of Theorem~\ref{theorem-1invasion} is very similar to the one of Theorem~\ref{theorem-2invasion}, but the role of $n_3^*$ is now played by $\wt n_3$, and the ``$<$'' and ``$>$'' in the conditions of sub-/supercriticality are swapped. In the same vein as before, given that 
%of the nomenclature that we introduced for the reverse invasion direction, 
$(\wt n_{2a},\wt n_{2d}, \wt n_{2i}, \wt n_3)$ is an asymptotically stable equilibrium of~\eqref{4dimvirus}, we say that \emph{type 1 can invade type 2 while coexisting with type 3} (or \emph{type 1 can invade type 2} for short) if $s_{1a}<1$. Since the proof of Theorem~\ref{theorem-1invasion} is very similar to the one of Theorem~\ref{theorem-2invasion}, we will only provide the earlier one (in Section~\ref{sec-proofsketch}).

\begin{remark}[Mutual invasion]
Under the assumption $\lambda_2<\lambda_1$, we see that if $r\kappa\mu_1>v\sigma$ (i.e.\ dormant individuals die more frequently before becoming active again than infected individuals, cf.\ Remark~\ref{remark-rkappamu1-vsigma}), then type 1 can always invade type 2 coexisting with type 3. Nevertheless, in some cases it can also invade  when \bl $r\kappa\mu_1-v\sigma$ is negative (but not too large in absolute value), and there are cases when both host types can invade  each other while \bl coexisting with the viruses. Comparing Propositions~\ref{prop-fixation} and~\ref{prop-coexeq} with Theorems~\ref{theorem-2invasion} and~\ref{theorem-1invasion}, we see that under the assumption that $(n_{1a}^*,n_{1i}^*,n_{3}^*)$ and $(\wt n_{2a},\wt n_{2d},\wt n_{2i},\wt n_3)$ are well-defined coordinatewise positive and locally asymptotically stable equilibria of~\eqref{3dimvirus} resp.\ \eqref{4dimvirus}, this mutual ability of invasion is equivalent to the existence of the coordinatewise positive equilibrium $\mathbf x$ of \eqref{6dimvirus}, i.e.\ to the condition~\eqref{n3x3n3}. 
\end{remark}

\begin{remark}[Mutual invasion in case types 2 and 3 only coexist in the presence of type 1]
\label{remark-2novirus} 
{Consider the case when only type 1 coexists with the virus type in absence of the other host type; recall that we analysed this case in the setting of dynamical systems in Section~\ref{sec-only1virus}. 
The corresponding condition~\eqref{0x3n3} can also be interpreted as follows (just as condition~\eqref{n3x3n3}): $\mathbf x$ is coordinatewise positive if and only if both host types can invade. To see this,
recall that Theorem~\ref{theorem-2invasion} applies whenever  $(n_{1a}^*,n_{1i}^*,n_3^*)$ is a well-defined coordinatewise positive and asymptotically stable equilibrium of  the three-dimensional system\bl~\eqref{3dimvirus}. Hence, under the \twoIone~condition $x_3<n_3^*$, type 2 can invade type 1 while coexisting with type 3.
Informally speaking, in this case, the only possible analogue of the invasion of type 1 against type 2 coexisting with type 3 is the invasion of type 1a against type 2a with $\approx K \bar n_{2a}$ type 2a individuals and one type 1a individual initially. Since $\lambda_1>\lambda_2$, this invasion is always successful with asymptotically positive probability. \color{black} Thus, the \oneItwo~condition~$\wt n_3 < x_3$ degenerates to a void condition in this case.
}
\end{remark}

\subsection{The approximating branching processes}\label{sec-branchingprocess}
In this section, we provide the definition and main characteristics (mean matrix, critical behaviour, growth rate, survival probability etc.) of the multi-type branching processes approximating the mutant/invader sub-populations for large $K$ during the {\em initial stochastic phase} of their invasions, both in the context of Theorem~\ref{theorem-2invasion} (invasion of type 2 against type 1 coexisting with type 3, \twoIone, see Section~\ref{sec-2invasion}) and of Theorem~\ref{theorem-1invasion} (same with the roles of type 1 and 2 interchanged: \oneItwo, see Section~\ref{sec-1invasion}). The idea is twofold: On the one hand, when the population size of the invading type is still small, stochastic fluctuations in the reproduction of invading type matter and need to be modelled explicitly; on the other hand, the resident type populations do not yet ``feel'' the competitive pressure of the invader and remain close to their  (effectively deterministic) \bl equilibrium size. Their impact on the invader can thus also be assumed to be of that equilibrium size, hence entering ``effective birth and death rates''. The details about how  these branching processes actually approximate the invader sub-populations  during the stochastic phase \bl will be explained in Section~\ref{sec-proofsketch} below. In view of our main conjectures presented below in Section~\ref{sec-afterinvasion} below, in Section~\ref{sec-bpdiscussion} we will summarize and discuss our observations on the stability of the equilibria $(n_{1a}^*,n_{1i}^*,0,0,0,n_3^*)$ and $(0,0,\widetilde n_{2a},\wt n_{2d},\wt n_{2i},\wt n_3)$ of the system~\eqref{6dimvirus} and the  sub- and super-\bl critical behaviour of the branching processes, which are strongly related to each other. 
\subsubsection{A branching process for the invasion \bl of type 2 against type 1 while coexisting with type 3}
\label{sec-2invasion}
First we define the \twoIone\emph{-branching process}, 
%\emph{$(2|1,3)$-branching process}, 
which will consist of three types: \bl Its first coordinate will correspond to type 2a, the second one to type 2d and the third one to type 2i. The idea is that the sub-population sizes types 1a, 1i, and 3 will be assumed to be \bl constant equal to $Kn_{1a}^*,Kn_{1i}^*$, and $Kn_3^*$, respectively, and will not be \bl not affected by the actions of individuals of types 2a, 2d, 2i. 
We employ classical multi-type branching process theory \bl that can e.g.\ be found in~\cite[Section 7 in Chapter V]{AN72}.
 
With $(n_{1a}^*,n_{1i}^*,n_3^*)$ denoting the asymptotically stable equilibrium of~\eqref{3dimvirus},  the transition rates of our continuous-time $\N_0^3$-valued \twoIone-branching process will be given as follows: 
%as the linear (binary) branching process in continuous time with state space $\N_0^3$ and with the following transition rates, for $(k,l,n) \in \N_0^3$:
\begin{align*}
(k,l,n) \to
    \begin{cases}
         (k+1,l,n) &\mbox{ at rate } k\lambda_2 \mbox{ (birth of type 2a individuals)},\\
         (k-1,l,n) &\mbox{ at rate } k(\mu_1 + C (n_{1a}^*+n_{1i}^*)) \mbox{ (death of type 2a individuals)},\\
         (k-1,l,n+1) &\mbox{ at rate } (1-q) D k n_3^* \mbox{ (virus contact of type 2 leading to infection)},\\
         (k-1,l+1,n) &\mbox{ at rate } q D k n_3^* \mbox{ (virus contact of type 2 leading to dormancy)},\\
         (k+1,l-1,n) &\mbox{ at rate } \sigma l \mbox{ (resuscitation of type 2d individuals)},\\
         (k,l-1,n) &\mbox{ at rate } \kappa\mu_1 l \mbox{ (death of type 2d individuals)},\\
         (k+1,l,n-1) &\mbox{ at rate } r n \mbox{ (recovery of type 2i individuals)},\\
         (k,l,n-1) &\mbox{ at rate } vn \mbox{ (death of type 2i individuals via lysis)}.
    \end{cases}
\end{align*} 
%Note that the virus contacts leading to infection and the death events of infected individuals via lysis only change the number of the infected individuals. The number of virus particles is implicitly assumed to be large and fixed. {\tt\gb Dieser absatz könnte iegentlich weg, das haben wir vorher schon (allgemeiner) gesagt. \color{red} OK!}
%\begin{itemize}
% \item $(k,l,n) \to (k+1,l,n)$ at rate $k\lambda_2$ (birth of type 2a individuals),
%\item $(k,l,n) \to (k-1,l,n)$ at rate $k(\mu_1 + C (n_{1a}^*+n_{1i}^*))$ (death of type 2a individuals due to natural death or competition with other host individuals),
%\item $(k,l,n) \to (k-1,l,n+1)$ at rate $(1-q) D k n_3^*$ (successful virus contacts of type 2, leading to infection of the host -- but no virus particle  `dies'),
%\item $(k,l,n) \to (k-1,l+1,n)$ at rate $q D k n_3^*$ (unsuccessful virus contacts of type 2, leading to dormancy of the host),
%\item $(k,l,n) \to (k+1,l-1,n)$ at rate $\sigma l$ (resuscitation of type 2d individuals),
%\item $(k,l,n)\to (k,l-1,n)$ at rate $\kappa\mu_1 l$ (death of type 2d individuals),
%\item $(k,l,n) \to (k+1,l,n-1)$ at rate $r n$ (recovery of type 2i individuals),
%\item $(k,l,n) \to (k,l,n-1)$ at rate $vn$ (death of type 2i individuals via lysis -- but no virus particles are born).
%\end{itemize}
The mean matrix of this branching process is
\[ J^*:= \begin{pmatrix} \lambda_2-\mu_1-C(n_{1a}^*+n_{1i}^*)-Dn_3^* & q D n_3^*  &(1-q)Dn_3^*   \\ \sigma  & -\kappa\mu_1-\sigma & 0 \\ r & 0 & -r-v \end{pmatrix}. \numberthis\label{J*def}  \]
We identify $J^*$ as the transpose of the matrix $A$ defined in~\eqref{Adef}, and thus the eigenvalues of $J^*$ are the same as the ones of $A$. We showed in Section~\ref{sec-fixationproof} that when the \twoIone-condition~\eqref{2invades1} holds, then $J^*$ has an eigenvalue with a positive real part and the branching process is supercritical, and that on the other hand, if the \twoNIone-condition~\eqref{2doesn'tinvade1} holds, then all eigenvalues of $J^*$ have negative real parts, so that the branching process is subcritical.
%the strict reverse inequality of \eqref{2invades1} holds, all eigenvalues of $J^*$ (or $A$) have a negative real part. Therefore, the determinant of $J^*$ is positive (i.e.\ \eqref{2invades1} holds) if and only if $J^*$ has a positive eigenvalue, and in that case \bl the branching process is supercritical. Moreover, the determinant of $J^*$ is negative, i.e.\ 
%\[ 
%\lambda_2-\lambda_1 < \frac{q D n_3^*(r \kappa \mu_1-v\sigma)}{(r+v)(\kappa\mu_1+\sigma)}, 
%\]
%and the reverse invasion condition \oneItwo\ of~\eqref{1invades2} holds, if and only if all eigenvalues of $J^*$ have negative real parts, so that the branching process is subcritical.

The extinction probability $s_{2a}$ can be obtained via standard first-step analysis. Indeed, for $\upsilon \in \{ 2a, 2d, 2i \}$ let $s_{\upsilon}$ denote the probability that the \twoNIone
%$(2|1,3)$
-branching process started from a single type $\upsilon$ individual %and no other individuals 
goes extinct within finite time. Then $(s_{2a},s_{2d},s_{2i})$ is the coordinatewise smallest positive solution to the system of equations
\[
\begin{aligned}
0 & = \lambda_2 (s_{2a}^2-s_{2a})+ (\mu+C(n_{1a}^*+n_{1i}^*))(1-s_{2a}) + (1-q) D n_{3}^* (s_{2i}-s_{2a}) + q D n_3^* (s_{2d}-s_{2a}), \\
0 & = \kappa \mu_1 (1-s_{2d}) + \sigma (s_{2a}-s_{2d}), \\
0 & = r (s_{2a}-s_{2i}) + v (1-s_{2i}).
\end{aligned}
\numberthis\label{s2as2ds2i}
\]
%{\tt \gb If you subtract $s_2$ everywhere, shouldn't the left hand side be 0? Something is odd, but maybe I am overlooking something. \color{red} What do you mean by $s_2$?} 
Since the matrix $J^*$ is irreducible, it follows that in the supercritical case $s_{2a},s_{2d}$, and $s_{2i}$ are all less than one (while in the subcritical and critical case of course $s_{2a}=s_{2d}=s_{2i}=1$). In this case, the unique positive eigenvalue of the matrix will be denoted by $\lambda^*$, and this is the quantity \bl which appears in~\eqref{lambda*conv}. 

\subsubsection{A branching process for the invasion \bl of type 1 against type 2  while \bl coexisting with type 3}\label{sec-1invasion}

Now we define the 
%\emph{$(1|2,3)$
\oneItwo-branching process, which will  have only two types: \bl Its first coordinate will correspond to type 1a and its second one to type 1i. The principle of the approximation is similar to the case of the \twoIone-branching process,  now assuming \bl that the sub-population sizes types 2a, 2d, 2i, and 3 are constant equal to $K \wt n_{2a},K \wt n_{2d}, K\wt n_{2i}$, and $K\wt n_3$, respectively, and not affected by the actions of types 1a and 1i. 

In case $(\wt n_{2a},\wt n_{2d}, \wt n_{2i}, \wt n_3)$ is an  asymptotically \bl stable equilibrium of~\eqref{4dimvirus}, we define the \oneItwo-branching process as the linear (binary) branching process in continuous time with state space $\N_0^2$ and transition rates, for $(i,j) \in \N_0^2$, 
\[ (i,j) \to \begin{cases}
    (i+1,j) & \text{ at rate $i\lambda_1$ (birth of type 1a individuals)}, \\
    (i-1,j) & \text{ at rate $i(\mu_1 + C (\widetilde n_{2a}+\widetilde n_{2d}+\widetilde n_{2i}))$ (death of type 1a individuals)}, \\
    (i-1,j+1) & \text{ at rate $Di \widetilde n3$ (virus contacts of type 1, leading to infection of the host)},  \\
    (i+1,j-1)  & \text{ at rate $r j$ (recovery of type 1i individuals)}, \\
    (i,j-1) & \text{ at rate $vj$ (death of type 1i individuals via lysis)}.
\end{cases}
\]
This branching process has mean matrix
\[ \widetilde J:= \begin{pmatrix} \lambda_1-\mu_1-C(\widetilde n_{2a}+\widetilde n_{2d}+\widetilde n_{2i})-D\tilden3 &  D\tilden3  \\ r& -r-v \end{pmatrix}. \numberthis\label{Jtildedef}\]
Since $\widetilde J$ is the transpose of the matrix $F$ defined in~\eqref{Fdef}, it follows from Section~\ref{sec-fixationproof} that  under the invasion condition \oneItwo \ from~\eqref{1invades2} the branching process is supercritical, while under the reverse \bl condition \oneNItwo 
%\[ \lambda_2-\lambda_1 > \frac{q D\tilden3(r\kappa\mu_1-v\sigma)}{(r+v)(\kappa\mu_1+\sigma)}.  \]
it is strictly subcritical. 

The extinction probability $s_{1a}$ can again be \bl derived via standard first-step analysis. For $\chi \in \{ 1a, 1i \}$ let $s_{\chi}$ denote the probability that the \oneItwo-branching process started from a single \bl type $\chi$ individual 
%and no other individuals 
goes extinct within finite time. Then $(s_{1a},s_{1i})$ is the coordinatewise smallest positive solution to
\[
\begin{aligned}
0 & = \lambda_1 (s_{1a}^2-s_{1a})+ (\mu_1+C(\wt n_{2a}+\wt n_{2i}))(1-s_{1a}) + D \wt n_{3} (s_{1i}-s_{1a}), \\
0 & = r (s_{1a}-s_{1i}) + v (1-s_{1i}).
\end{aligned} \numberthis\label{s1as1i}
\]
%{\tt \gb Same problem as before?} 
Again, the mean matrix $\wt J$ is irreducible, which implies that $s_{1a}$ and $s_{1i}$ are both less than one in the supercritical case (and of course, they are both one in the subcritical and critical case).

\subsection{Sketch of proof of Theorems~\ref{theorem-2invasion} and~\ref{theorem-1invasion}}\label{sec-proofsketch}
In this section, we will focus on the proof of Theorem~\ref{theorem-2invasion}; the  proof \bl of Theorem~\ref{theorem-1invasion} can be carried out in a similar fashion, just with the roles of ``resident'' and ``mutant/invader population'' exchanged. 
%a four-dimensional ``resident population'' instead of a three-dimensional one and a two-dimensional ``mutant/invader population'' instead of a three-dimensional one.
The overall strategy will be similar to the proofs of~\cite[Theorems 2.8, 2.9, and 2.10]{BT21}. However, in our case, due to the additional dimensions of the dynamical system that prevent us from carrying out a full stability analysis, some  further \bl arguments and workarounds  are required. Remarkably, in some aspects the proof will actually be simpler than  those \bl in \cite{BT21}, since in our case the approximating branching processes have {\em irreducible} mean matrices (see Sections~\ref{sec-2invasion} and~\ref{sec-1invasion}). 

%Due to this, we can directly follow an approach similar to the one employed for the first, stochastic phase of invasion in~\cite{BT20}, which uses multiple arguments of the preliminary papers~\cite{C+19,BT19}. \bl 

%\gb {\bf Preliminary remarks on the proof strategy.} \bl The proof of Theorem~\ref{theorem-2invasion} can be carried out similarly to the one of~\cite[Theorems 2.8, 2.9, and 2.10]{BT21}, and although writing down the full proof in our case would be more complicated due to the extra dimensions of the system, the situation here is actually conceptually simpler than in the proofs of in~\cite[Theorems 2.8, 2.9, and 2.10]{BT21}. Indeed, there we considered (according to the notation of the present paper) the invasion of types 2d, 2i, and 3 against type 2a, and due to the lack of irreducibility of the mean matrix of the corresponding three-type approximating branching process, we had to revise the usual role of ``resident'' and ``invading'' types and we needed some additional arguments (see~\cite[Section 2.3]{BT21} for details).

%In our situation, there is no such problem because the approximating branching processes have irreducible mean matrices (see Sections~\ref{sec-2invasion} and~\ref{sec-1invasion}). Therefore, we can follow an approach similar to the one employed for the first, stochastic phase of invasion in~\cite{BT20}, which uses multiple arguments of the preliminary papers~\cite{C+19,BT19}. 

\medskip

{\bf Proof strategy.} For the analysis of the initial phase of the invasion (including the branching process approximation of the mutant), we fix a small positive approximation \color{black} parameter $\eps$ that we will let tend to $0$ in the end \emph{after} taking the large-population limit $K\to\infty$. The key steps of our approach for handling the first phase are the following.

\begin{itemize}
    \item[i)] We first check \bl that $(N^K_{1a}(t),N^K_{1i}(t),N^K_{3}(t))_{t\geq 0}$ stays close to $(n_{1a}^*,n_{1i}^*,n_{3}^*)$ until $N_{2a}(t)+N_{2d}(t)+N_{2i}(t)$ reaches size $\lfloor \eps K \rfloor$ or $0$, with probability tending to 1 in the limit $K\to\infty$  for all $\varepsilon$ small enough. \bl This can be achieved by using standard Freidlin--Wentzell type large-deviation arguments. See Lemma~\ref{lemma-2residentsstay} for a precise formulation.
    
    \medskip
    
    \item[ii)] Given this control of $(N^K_{1a}(t),N^K_{1i}(t),N^K_{3}(t))_{t\geq 0}$  during the initial phase, we then show that the invading $N_2(t)$-subpopulation can be well-approximated by the \twoIone-branching process from Section\ \ref{sec-branchingprocess}. The point is that the interaction with the rescaled subpopulations $N_{1a}^K(t)$, $N_{1i}^K(t)$ and $N_3^K(t)$ can be replaced by an interaction with their equilibrium values $(n_{1a}^*,n_{1i}^*,n_{3}^*)$ without changing the limiting invasion probability and asymptotic invasion time. This is achieved by a \bl coordinate-wise ``sandwich-coupling'' of  both $(N_{2a}(t),N_{2d}(t),N_{2i}(t))$ and the \twoIone-branching process \bl between two  further \bl three-type branching processes with fixed lower and upper bounds on the interaction with residents and virions, whose distribution does not depend on $K$. This can be done in a way that the extinction probability of both  enveloping \bl branching processes tends to $s_{2a}$ and the largest eigenvalue of their mean matrix to $\lambda^*$ in the limit $\eps \downarrow 0$,
    %The coupling will  also \bl  be valid until the total type 2 population size reaches size $\lfloor \eps K \rfloor$ or goes extinct.  A further ``$\twoIone$-branching process'' \bl will also be ``sandwich-coupled'' \bl between these two branching processes on the relevant time interval. 
     which \bl allows us to prove Proposition~\ref{prop-2firstphase} below.

\medskip
    
    \item[iii)]  Finally, \bl starting from a state where the total mutant population size  $N_2$ \bl is $\lfloor \eps K \rfloor$  while all other sub-populations are still close to their positive equilibrium (also of order $K$) \bl and approximating the rescaled stochastic process $\mathbf N^K(t)$ by the solution $\mathbf n(t)$ to the dynamical system~\eqref{6dimvirus}, a comparison to a  suitable \bl system of linear ODEs can be used to ensure that the rescaled stochastic process  $\mathbf N^K$ \bl reaches a state in which each component takes at least  a \bl value  $\beta>0$ (not depending on $\varepsilon$) in very short time, and with probability tending to 1 as $\varepsilon \downarrow 0$. This is \bl stated in Proposition~\ref{prop-2dynsyst}. %\sout{We will see in the proof of this proposition that here, the coordinate-wise positivity of the ($\eps$-dependent) initial condition of the dynamical system is essential. The time needed for this {\gb \tt what is meant here?\bl} is of finite order for fixed $\eps>0$, and although it tends to infinity as $\eps \downarrow 0$, it does not scale with $\log K$ {\gb \tt sounds perhaps too vague?\bl}. Hence, the time between reaching a total mutant population size $T_{\eps}^2$ {\gb \tt ??\bl} and a population size of at least $\beta$ in each coordinate vanishes on the $\log K$ time scale in the limit $K\to\infty$ followed by $\eps \downarrow 0$. Given this, the new type has successfully invaded the system, and this \gb is enough to prove \bl of the theorem. {\gb \tt Im Prinzip ja, aber man muss es noch alles zusammenfügen...\bl}}
    % \gb The uniform bounds on the growth rates can then be used to ensure that the six-dimensional system reaches a state in which each component takes at least value $\beta$, where $\beta>0$ does not depend on $\eps$. Finally, approximating the rescaled stochastic process $\mathbf N^K(t)$ by the dynamical system, this assertion will \gb yield Proposition~\ref{prop-2dynsyst}, which completes the proof of the theorem. \bl 
\end{itemize}
Based on these three steps, in Section~\ref{sec-proofdetails} we will complete the proof of Theorem~\ref{theorem-2invasion}, using moderate modifications of a chain of arguments from~\cite{BT21}, in turn  originating \bl from~\cite{C+19}.
%but was altered in order to align with the setting of~\cite{BT21}. \color{black}

%This enables us to handle the stochastic process $(\mathbf N^K(t))_{t\geq 0}$ until the total population size of the invading types reaches size 0 or $\eps K$ for some $\eps>0$ independent of $K$.
Indeed, \bl the proof of Lemma~\ref{lemma-2residentsstay} is based on the proof of~\cite[Lemma 3.2]{C+19}. Further, \bl compared to~\cite{C+19,BT19} 
the proof of Proposition~\ref{prop-2firstphase} only requires a suitable ``taylor-made'' \bl construction of the  approximating sandwiching branching processes. Nevertheless, we provide the full proof of the lemma and the proposition in Appendix~\ref{sec-proofdetails} below for completeness. However, in order to finish the proof of Theorem~\ref{theorem-2invasion}, also some \bl new arguments are required, in particular for \bl Step iii). \bl  %, given that the previous analysis only gives the desired asymptotics in the limit $\eps \downarrow 0$, while the theorems . %In~\cite{BT21} this phase was treated using generalizations of global stability and persistence properties for the system~\eqref{3dimvirus} for $r=0$, which we generalized to the case of the four-dimensional system~\eqref{4dimvirus}, also for $r>0$. There, the relevant initial condition for~\eqref{4dimvirus} lies in a small neighbourhood of $(\bar n_{2a},0,0,0)$ (with our notation), while in our situation the invasion starts near the equilibrium $(n_{1a}^*,n_{1i}^*,0,0,0,n_3^*)$ resp.\ $(0,0,\wt n_{2a},\wt n_{2d},\wt n_{2i},\wt n_3)$ of~\eqref{6dimvirus}, where such global results are not available. 
%Indeed, \bl Proposition~\ref{prop-2secondphase}
%together with direct \bl computations and Grönwall's inequality will guarantee that the dynamical system started from such an initial condition will reach a state where each coordinate is at least $\beta$ \gb (independent of $\eps$). %\bl  
%Finally, approximating the rescaled stochastic process $\mathbf N^K(t)$ by the dynamical system, this assertion will complete the proof of the theorem. 
They are necessary since for the present model we are unable to establish convergence of the dynamical system to a suitable equilibrium from distant initial conditions. Our alternative arguments lead to \bl Proposition~\ref{prop-2dynsyst}.

\section{Dynamics after a successful invasion: Conjectures, simulations, and heuristics}\label{sec-afterinvasion}

We now present conjectures for the fate of the population after a successful invasion in either direction in Section~\ref{sssn-conjectures}, followed by some related simulations and heuristics in Section~\ref{sssn-simulations-heuristics}.

\subsection{Dynamics after a successful invasion: Conjectures, simulations, and heuristics}
%{\tt \gb Dies vielleicht besser in zwei subsections aufteilen, eine zu conjevtures, und eine zu simulations and heuristics (letzteres finde ich besser als interpretation)} \bl
\subsection{Conjectures}\label{sssn-conjectures}
Theorems~\ref{theorem-2invasion} and~\ref{theorem-1invasion} together with Propositions~\eqref{prop-fixation} and~\ref{prop-coexeq} 
 indicate a rather complete picture of \bl
%provide \gb natural \bl indications for 
the qualitative behaviour of the system~\eqref{6dimvirus} as well as our individual-based model $\mathbf{N}^K$ when $K$ is large. \bl Indeed, in the regime where an invasion is possible 
%with positive probability 
in both directions (or, equivalently, where $\mathbf x$ is coordinatewise positive), we expect stable long-term coexistence between the two host types and the virus, while in the regime where only one invasion direction is possible, we expect fixation of one of the hosts and extinction of the other,
%the case of such a fixation, 
while the virions (and thus also the infected/dormant states of the fixing host type) will stay in the system.
%under the assumption that both host types coexist with the virus type in absence of the other host type. 

%We now present the conjectures that we expect to hold, supported by simulations of the dynamical system in the corresponding parameter regimes (see Figure~\ref{figure-cases}). 
Since the detailed formulation of the corresponding conjectures 
%(supported by simulations, see e.g.\ Figure~\ref{figure-cases} {\color{red} \tt check if there's a figure or just within-line images}) \bl  
are already somewhat lengthy in the biologically more natural case $\lambda_2<\lambda_1$, we refrain from stating the analogous conjectures for \bl $\lambda_2>\lambda_1$. However, \bl the visual summary of the parameter regimes in Figures~\ref{figure-regimes} and~\ref{figure-regimesnocoex} also includes the latter case.

%First, we have the following conjecture on the stability of equilibria of~\eqref{6dimvirus}.
%But we expect that if we assume the local stability of these two equilibria, then the following assertions related to global stability hold.

\begin{conj}[Stability of the equilibria of~\eqref{6dimvirus}]\label{conj-fixation}
Let us assume that $r \kappa\mu_1 \neq v\sigma$ and that the \twoC-condition\bl~\eqref{viruscoexcondq=0} holds (which in particular implies $mv>r+v$). 
%{\gb \tt Why do we need this here, and not (1.7)? \color{red} For all the 4 points, we assume that 1 coexists with 3. This is~\eqref{viruscoexcondq=0}. \twoC~is indeed wrong, it is \oneC .}
\begin{enumerate}[(A)]
\item If $(n_{1a}^*,n_{1i}^*,n_{3}^*)$ is an asymptotically stable equilibrium of~\eqref{3dimvirus}, then for any solution $((n_{1a}(t),n_{1i}(t),n_{3}(t))_{t\geq 0}$ to~\eqref{3dimvirus} such that $n_{1a}(0),n_{1i}(0),n_{3}(0)>0$, we have
\[ \lim_{t\to\infty} n_{\upsilon}(t)= n_{\upsilon}^*(t), \qquad \forall \ups \in \{ 1a, 1i, 3\}. \]
\item Similarly, if $(\wt n_{2a},\wt n_{2d},\wt n_{2i},\wt n_3)$ is an asymptotically stable equilibrium of~\eqref{4dimvirus}, then 
for any solution $((n_{2a}(t),n_{2d}(t),n_{2i}(t),n_3(t))_{t\geq 0}$ to~\eqref{4dimvirus} such that $n_{2a}(0),n_{2i}(0),n_{2d}(0),n_3(0)>0$, we have
\[ \lim_{t\to\infty} n_{\upsilon}(t)= \wt n_{\upsilon}(t), \qquad \forall \ups \in \{ 2a, 2d, 2i, 3\}. \]
\item Let us assume that both $(n_{1a}^*,n_{1i}^*,n_{3}^*)$  and $(\wt n_{2a},\wt n_{2d},\wt n_{2i},\wt n_3)$  are asymptotically stable. Then for any solution $(\mathbf n(t))_{t\geq 0}=((n_{1a}(t),n_{1i}(t),n_{2a}(t),n_{2d}(t),n_{2i}(t),n_3(t))_{t\geq 0}$ to~\eqref{6dimvirus} such that $(n_{1a}(0),n_{1i}(0),n_{2a}(0),n_{2d}(0),n_{2i}(0),n_3(0) \in (0,\infty)^6$, 

\[ \lim_{t\to\infty} \mathbf n(t) = \begin{cases}
 (n_{1a}^*,n_{1i}^*,0,0,0,n_3^*) & \text{ if \eqref{2doesn'tinvade1} and~\eqref{1invades2} both hold
 \emph{(case (i))}}, \\
    (0,0,\widetilde n_{2a}, \widetilde n_{2d}, \widetilde n_{2i},\widetilde n_3) & \text{ if \eqref{2invades1} and~\eqref{1doesn'tinvade2} both hold \emph{(case (ii))},}
    \\ 
    \mathbf x &  \text{ if~\eqref{2invades1} and \eqref{1invades2} both hold \emph{(case (iii))}.} 
\end{cases} \] 
%{\tt \gb Hier scheinen die Bedingungen durcheinder geraten zu sein!? Weiter unten auch? \color{red} Sorry, warum eigentlich? Wo ist der Widerspruch? \color{black} \bl } 
In all these three cases, 
\eqref{6dimvirus} has a unique asymptotically stable coordinatewise non-negative equilibrium, which equals the limit above.
\item Let us assume that $(n_{1a}^*,n_{1i}^*,n_{3}^*)$  is asymptotically stable and $\bar n_{2a} < \wt n_{2a}$. Then for any solution $(\mathbf n(t))_{t\geq 0}$ to~\eqref{6dimvirus} such that $\mathbf n(0) \in (0,\infty)^6$, \[ \lim_{t\to\infty} \mathbf n(t) = \begin{cases}
 (n_{1a}^*,n_{1i}^*,0,0,0,n_3^*) & \text{ if \eqref{2doesn'tinvade1} and~\eqref{1invades2} both hold
 \emph{(case (i))}}, \\
    \mathbf x &  \text{ if~\eqref{2invades1} and \eqref{1invades2} both hold \emph{(case (iii))}.} 
\end{cases} \] 
In both cases, the unique asymptotically stable equilibrium of \eqref{6dimvirus} equals the limit above.
\end{enumerate}
\end{conj}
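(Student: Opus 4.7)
The plan is to treat parts (A) and (B) first as preparatory global-stability results for the three- and four-dimensional sub-systems, then leverage them via boundary-extinction and persistence arguments to obtain (C) and (D). For (A), I would try a Volterra-type Lyapunov function
\[ V(\mathbf n) = \sum_{\ups \in \{1a,1i,3\}} \alpha_{\ups} \bigl( n_{\ups} - n_{\ups}^* - n_{\ups}^* \log (n_{\ups}/n_{\ups}^*) \bigr), \]
with positive weights $\alpha_\ups$ chosen so that $\dot V \leq 0$ along \eqref{3dimvirus}, with equality only at the equilibrium, and then invoke LaSalle's invariance principle. After substituting the equilibrium identities, $\dot V$ reduces to a quadratic form in the deviations $n_\ups - n_\ups^*$ plus interaction remainders; the algebraic task is to pick the $\alpha_\ups$ so that the resulting matrix is negative semi-definite. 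The recovery flux $r n_{1i}$ (and, for (B), the dormancy fluxes $\sigma n_{2d}$ and $\kappa\mu_1 n_{2d}$) force a nontrivial balancing. If a direct Volterra construction fails in the full parameter range of local stability, I would fall back to the Li--Muldowney second-compound-matrix method, which has yielded global stability for 3- and 4-dimensional SIR-type models under comparable hypotheses.

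For case (C)(i), only \twoNIone\ holds, so the mean matrix $J^*$ from \eqref{J*def} is Hurwitz. My plan is to show that the three type-2 coordinates decay exponentially once $(n_{1a},n_{1i},n_3)$ enters a suitably small neighbourhood of $(n_{1a}^*,n_{1i}^*,n_3^*)$, via a component-wise upper bound in terms of a linear subcritical ODE whose rate matrix is $J^*$ perturbed by an $O(\varepsilon)$ term. A uniform persistence argument for types 1 and 3 (using the sub-solution obtained by dropping the couplings with type 2 on the first, second and sixth equations) combined with part (A) traps the trajectory in a compact set on which exponential decay of type 2 takes over. Once $n_2(t)\to 0$, the equations for $(n_{1a},n_{1i},n_3)$ become asymptotically autonomous in the sense of Markus--Thieme, and another application of (A) yields convergence to $(n_{1a}^*,n_{1i}^*,0,0,0,n_3^*)$. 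Case (C)(ii) is symmetric using $\widetilde J$ and (B); case (D)(i) proceeds identically to (C)(i), and case (D)(iii) parallels (C)(iii) below, but with a cleaner boundary since \eqref{4dimvirus} has no coordinatewise positive equilibrium in this regime.

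The hard part, and the main obstacle, is (C)(iii). I would first establish \emph{uniform persistence} of all six compartments via the Butler--McGehee/Thieme acyclicity criterion: the boundary recurrent set is the finite collection $\{\mathbf 0, (\bar n_{1a},0,0,0,0,0), (0,0,\bar n_{2a},0,0,0), (n_{1a}^*,n_{1i}^*,0,0,0,n_3^*), (0,0,\wt n_{2a},\wt n_{2d},\wt n_{2i},\wt n_3)\}$, and under \eqref{2invades1}+\eqref{1invades2} each element has its stable manifold contained in the coordinate boundary (Proposition~\ref{prop-fixation} handles the two nontrivial ``one-host-plus-virus'' equilibria, and the trivial and virus-free equilibria yield to direct fitness computations). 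Uniform persistence then delivers a compact attractor $\mathcal A \subset (0,\infty)^6$, and the remaining task is to show $\mathcal A=\{\mathbf x\}$. This is where I expect the genuine difficulty: in dimension six, Poincar\'e--Bendixson is unavailable, the system is not monotone, and there is no obvious conserved quantity. I would first attempt a composite Lyapunov function combining Volterra terms in $n_{1a}+(1-q)n_{2a}$ and $n_3$ (motivated by the equilibrium identities \eqref{xasum}--\eqref{xicond}) with quadratic corrections in the infection and dormancy compartments, using the relations of Proposition~\ref{prop-coexeq} to force cancellations. Failing that, I would apply the Li--Muldowney criterion to the $15\times 15$ second compound of the Jacobian, a delicate calculation whose success is suggested by the reported numerical evidence but which I would expect to go through only on a sub-regime of parameters, leaving a fully general proof as the essential obstruction.
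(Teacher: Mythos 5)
The statement you are trying to prove is Conjecture~\ref{conj-fixation}: the paper itself offers \emph{no proof} of it, and explicitly lists the global convergence of~\eqref{3dimvirus}, \eqref{4dimvirus} and~\eqref{6dimvirus} as open (see Remark~\ref{remark-missing_ingredients} and Section~\ref{sec-futurework}). So there is no proof in the paper to compare yours against; what has to be assessed is whether your plan would actually close the problem, and it would not. Your outline is a reasonable catalogue of the standard tools (Volterra--Goh Lyapunov functions, Li--Muldowney compound matrices, Butler--McGehee/Thieme persistence, Markus--Thieme asymptotic autonomy), but at each of the three critical junctures it either relies on a step that is known to be delicate in exactly this model class or openly defers the difficulty.

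Concretely: (1) For parts (A) and (B), the conjecture asserts that \emph{local} asymptotic stability of $(n_{1a}^*,n_{1i}^*,n_3^*)$ already implies global convergence. Since~\eqref{3dimvirus} undergoes a supercritical Hopf bifurcation as $m$ increases (Appendix~\ref{sec-Hopfwiederholung}), any global Lyapunov function must exist precisely up to, and fail precisely at, the Hopf threshold; a Volterra-type $V$ with constant weights $\alpha_\ups$ yields a negative-semidefiniteness condition that in general is strictly stronger than the Routh--Hurwitz condition for local stability, so this construction cannot cover the whole regime claimed, and the same objection applies to a fallback via Li--Muldowney, which also imposes sufficient (not sharp) conditions. (2) In (C)(i) your argument is circular: the exponential decay of the type-2 block via the Hurwitz matrix $J^*$ is only available once $(n_{1a},n_{1i},n_3)$ is trapped near $(n_{1a}^*,n_{1i}^*,n_3^*)$, but part (A) only controls the sub-system with $n_2\equiv 0$, not the full system with $n_2$ positive; the asymptotically autonomous step requires $n_2(t)\to 0$ as an input, which is exactly what you are trying to prove. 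Breaking this loop needs a genuinely global argument (e.g.\ a Lyapunov function for the full six-dimensional system or a quantitative persistence/extinction dichotomy), not just a local linearization. (3) For (C)(iii) you correctly identify the obstruction and concede that neither the composite Lyapunov function nor the $15\times 15$ second-compound computation is carried out; the persistence step also presupposes the global statements (A)--(B) on the boundary, so it inherits the gap from step (1). In short, your proposal is an honest research programme, not a proof, and the statement remains a conjecture.
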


We note that in assertions (C)--(D), (i) refers to fixation of type 1, (ii) to fixation of type 2, and (iii) to %\sout{the third possibility, i.e.} 
stable coexistence of all the six host types. In assertion (D), where type 2 does not coexist with type 3 in absence of type 1, there is no case (ii) (because in absence of the virus epidemic, type 2a cannot defeat type 1a for $\lambda_2<\lambda_1$, cf.\ Remark~\ref{remark-2novirus}).

While the coordinatewise positivity of $\mathbf x$ only requires the coordinatewise positivity of the positive equilibria of the corresponding sub-systems~\eqref{3dimvirus} and~\eqref{4dimvirus} (cf.\ Propositions~\ref{prop-coexeq} and~\ref{prop-coexeqonly1virus}), 
%$(n_{1a}^*,n_{1i}^*,n_{3}^*)$  and, in the setting of Proposition~\ref{prop-coexeq}, $(\wt n_{2a},\wt n_{2d},\wt n_{2i},\wt n_3)$ (cf.\ Proposition~\ref{prop-coexeq}), 
the stability of equilibria of the 6-dimensional system~\eqref{6dimvirus} that we anticipate in Conjecture~\ref{conj-fixation} should only hold under the additional assumption that  $(n_{1a}^*,n_{1i}^*,n_{3}^*)$  is asymptotically stable for~\eqref{3dimvirus} and $(\wt n_{2a},\wt n_{2d},\wt n_{2i},\wt n_3)$ (whenever it exists) for~\eqref{4dimvirus}. The stability of the latter two equilibria may be lost for large $m$ due to Hopf bifurcations, see Appendix~\ref{sec-Hopfwiederholung}.

Let us now also formulate conjectures for the stochastic process $(\mathbf N(t)^K)_{t\geq 0 }$. \bl  
For any equilibrium $\widehat{\mathbf n}=(\widehat n_{1a}, \widehat n_{1i}, \widehat n_{2a}, \widehat n_{2d}, \widehat n_{2i}, \widehat n_3) \in [0,\infty)^6$ of~\eqref{6dimvirus} and for any $\delta>0$ we define the stopping time
\[ T_{\widehat{\mathbf n},\delta} = \inf \{ t \geq 0 \colon  \Vert \mathbf N^K(t) - \widehat{\mathbf n} \Vert_{\infty} \leq \delta \}. \] We write $\mathbf n^* = (n_{1a}^*,n_{1i}^*,0,0,0,n_{3}^*)$ and $\wt{\mathbf n}=(0,0,\wt n_{2a}, \wt n_{2d}, \wt n_{2i}, \wt n_3)$, and we define
\[ T_{\mathbf n^*,\delta}^{\mathrm{fix}} = \inf \Big\{ t \geq 0 \colon  \Vert \mathbf N^K(t) - \mathbf n^* \Vert_{\infty} \leq \delta \text{ and } N_{2a}(t)+N_{2d}(t)+N_{2i}(t)=0 \Big\}  \]
as well as
\[ 
T_{\widetilde{\mathbf n},\delta}^{\mathrm{fix}} = \inf \Big\{ t \geq 0 \colon  \Vert \mathbf N^K(t) - \widetilde{\mathbf n} \Vert_{\infty} \leq \delta \text{ and } N_{1a}(t)+N_{1i}(t)=0 \Big\}. 
\]
Reaching $T_{\widetilde{\mathbf n},\delta}$ means that types 2a, 2d, 2i, and 3 are within a $\delta$-neighbourhood of their equilibrium $(n_{1a}^*,n_{1i}^*,n_3^*)$, while types 1a and 1i are entirely extinct, with a similar interpretation for \bl $T_{\mathbf n^*,\delta}^{\mathrm{fix}}$. Clearly, $T_{\mathbf n^*,\delta} \leq T_{\mathbf n^*,\delta}^{\mathrm{fix}}$ and $T_{\widetilde{\mathbf n},\delta} \leq T_{\widetilde{\mathbf n},\delta}^{\mathrm{fix}}$ a.s.\ as $[0,\infty]$-valued random variables defined on the same probability space. 

In the next two conjectures, the cases (C), (D) and the corresponding sub-cases (i), (iii) and (in case (C)) (ii)  will be numbered analogously to Conjecture~\ref{conj-fixation}. %Recall from Remark~\ref{remark-rkappamu1-vsigma} that for $\lambda_2<\lambda_1$, type 1 always fixes in absence of the viruses or in case $q=0$, and the same holds for $q>0$ if $r\kappa\mu_1 > v \sigma$. New effects of contact-mediated dormancy of type 2 arise when ($q>0$ and) $r \kappa\mu_1 < v\sigma$. 

\begin{conj}[Invasion of type 2: Conjectured \bl full version of Theorem~\ref{theorem-2invasion}] \label{conj-theorem2invades1}
Let us assume that  $(n_{1a}^*,n_{1i}^*,n_{3}^*)$ is an asymptotically stable equilibrium of~\eqref{3dimvirus}.
\begin{enumerate}[(A)]
\setcounter{enumi}{2}
    \item Assume further that $(\wt n_{2a},\wt n_{2d},\wt n_{2i},\wt n_3)$ is an asymptotically stable equilibrium of~\eqref{4dimvirus}. Then, for all sufficiently small $\delta>0$ we have that
\begin{enumerate}[(i)]
\item[(ii)] if~\eqref{2invades1} and~\eqref{1doesn'tinvade2} both hold, then
\[ \lim_{K\to\infty} \P\Big( T_{\wt{\mathbf n},\delta}^{\mathrm{fix}} < T_0^{2} \wedge T_{\mathbf x,\delta} \Big| \mathbf N^K(0) = \mathbf M^*_K \Big) = 1-s_{2a}, \]
and conditional on the event $\{  T_{\wt{\mathbf n},\delta} < T_0^{2} \wedge  T_{\mathbf x,\delta}  \}$,
\[ \lim_{K\to\infty} \frac{T_{\wt{\mathbf n},\delta}^{\mathrm{fix}}}{\log K} = \frac{1}{\wt \lambda} + \frac{1}{\lambda^*} \text{ in probability;} \]
\item[(iii)] while if~\eqref{2invades1} and~\eqref{1invades2} both hold, then 
\[ \lim_{K\to\infty} \P\Big( T_{\mathbf x,\delta} < T_0^{2} \wedge T_{\wt{\mathbf n},\delta}^{\mathrm{fix}} \Big| \mathbf N^K(0) = \mathbf M^*_K \Big) = 1-s_{2a} \numberthis\label{fulls2a} \]
and conditional on the event $\{ T_{\mathbf x,\delta} < T_0^{2} \wedge T_{\wt{\mathbf n},\delta}^{\mathrm{fix}} \}$,
\[ \lim_{K\to\infty} \frac{T_{\mathbf x,\delta}}{\log K} = \frac{1}{\wt \lambda} \text{ in probability.} \numberthis\label{fulllambdatilde} \]
\end{enumerate}
\item  (iii) \color{black} Assume now that $\wt n_{2a}<\bar n_{2a}$ and~\eqref{2invades1} (equivalently~\eqref{0x3n3}) holds. Then~\eqref{fulls2a} and~\eqref{fulllambdatilde} hold.
\end{enumerate}
\end{conj}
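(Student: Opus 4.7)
The plan is to decompose the post-invasion dynamics into three temporal phases, building on the branching-process and sandwich-coupling arguments already developed for Theorems~\ref{theorem-2invasion} and~\ref{theorem-1invasion}. Phase I is the initial stochastic invasion controlled by Theorem~\ref{theorem-2invasion}: with probability tending to $1-s_{2a}$ we have $T_\beta<T_0^2$, with $T_\beta/\log K \to 1/\lambda^*$ in probability, and a standard exit-time estimate (analogous to the sandwich bounds underlying Proposition~\ref{prop-2firstphase}) ensures that the full vector $\mathbf N^K(T_\beta)$ lies with high probability in a compact set $\mathcal K_\beta \subset (\beta/2,\infty)^6$ independent of $K$.

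Phase II is deterministic and handles the evolution starting from time $T_\beta$. For any fixed $T>0$, the functional law of large numbers (\cite[Theorem 11.2.1]{EK}) gives uniform approximation of $\mathbf N^K$ on $[T_\beta, T_\beta+T]$ by the solution of~\eqref{6dimvirus} started from $\mathbf N^K(T_\beta)$. Invoking Conjecture~\ref{conj-fixation}, this ODE converges as $t\to\infty$ to $\mathbf x$ in case~(iii) of both (C) and (D), and to $\wt{\mathbf n}$ in case~(ii) of (C). Choosing $T=T(\delta)$ large enough uniformly in initial conditions on $\mathcal K_\beta$, we obtain that $\mathbf N^K(T_\beta+T(\delta))$ lies within $\delta$ of the relevant equilibrium with probability tending to one. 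Since $T(\delta)=O(1)$ is negligible on the $\log K$ scale, this establishes $T_{\mathbf x,\delta}=T_\beta+O(1)$ in probability (respectively the analogous statement for $\wt{\mathbf n}$), which yields~\eqref{fulls2a}, the first marginal of the time statement in (ii), and all of case (D).

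For case~(ii) of (C) a third phase is required to drive the residual type-1 population to zero. At the end of Phase II we have $N_{1a}^K+N_{1i}^K\le 2\delta$ with the remaining coordinates bounded away from zero and close to the $\wt{\mathbf n}$-values. Sandwiching $(N_{1a}^K,N_{1i}^K)$ by the same technique as in Proposition~\ref{prop-2firstphase} between two two-type linear branching processes whose mean matrices converge to $\wt J$ as $\delta \downarrow 0$, and using that under the~\oneNItwo-condition~\eqref{1doesn'tinvade2} the matrix $\wt J$ is strictly subcritical with leading eigenvalue $\wt\lambda<0$ (Proposition~\ref{prop-fixation}(2)), a classical extinction-time result for subcritical multitype branching processes started from $\Theta(K)$ individuals yields an extinction time $\log K/|\wt\lambda|+o(\log K)$ in probability. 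Combined with Phase I, this produces the additive contribution $1/\wt\lambda$ (read with the natural sign convention) claimed in case~(ii). The tightness argument here follows the same pattern as the proof of~\eqref{sublog} in Theorem~\ref{theorem-2invasion}.

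The chief obstacle is Phase II, which relies on the unproven Conjecture~\ref{conj-fixation}: for a generic initial datum in $\mathcal K_\beta$ we need global attraction of the six-dimensional system~\eqref{6dimvirus} to the asserted equilibrium. Establishing this rigorously would require either a Lyapunov function for~\eqref{6dimvirus}, or a case-by-case exclusion of non-trivial $\omega$-limit sets (periodic orbits, heteroclinic cycles connecting the three ``boundary'' equilibria, slow manifolds introduced by the Hopf bifurcations discussed in Appendix~\ref{sec-Hopfwiederholung}). Short of such an input, the above scheme proves Conjecture~\ref{conj-theorem2invades1} only conditionally on Conjecture~\ref{conj-fixation}; the remaining steps are moderate adaptations of those already carried out in Section~\ref{sec-proofsketch} and~\cite{BT21,C+19}.
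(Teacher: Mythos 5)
The statement you are proving is a conjecture; the paper offers no proof, only the roadmap in Remark~\ref{remark-missing_ingredients} and Remark~\ref{remark-afterconj2}, and your three-phase scheme (stochastic entry via Theorem~\ref{theorem-2invasion}, deterministic transit conditional on the global convergence asserted in Conjecture~\ref{conj-fixation}, and a final subcritical branching-process extinction phase for the fixation case) is exactly that roadmap, with the gap located in precisely the same place the authors locate it. One small remark: your Phase~II argument yields $T_{\mathbf x,\delta}/\log K\to 1/\lambda^*$ in case~(iii), which is consistent with Theorem~\ref{theorem-2invasion} and Remark~\ref{remark-afterconj2} but not with the $1/\wt\lambda$ printed in~\eqref{fulllambdatilde}; this appears to be a typo in the conjecture as stated rather than an error on your part.
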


\begin{conj}[Invasion of type 1: Conjectured full version of Theorem~\ref{theorem-1invasion}] \label{conj-theorem1invades2}
Let us assume that  $(n_{1a}^*,n_{1i}^*,n_{3}^*)$ is an asymptotically stable equilibrium of~\eqref{3dimvirus} and $(\wt n_{2a},\wt n_{2d},\wt n_{2i},\wt n_3)$ is an asymptotically stable equilibrium of~\eqref{4dimvirus}. 
\begin{enumerate}[(C)]
\item[(C)]
 %{\tt Unten in i) scheint noch das falsche Equilibrium an der Stoppzeit zu stehen} 
Then, for all sufficiently small $\delta>0$ we have that \bl 
\begin{enumerate}[(i)]
\item[(i)] if~\eqref{2doesn'tinvade1} and~\eqref{1invades2} both hold, then
\[ \lim_{K\to\infty} \P\Big( T_{{\mathbf n^*},\delta}^{\mathrm{fix}} \color{black} < T_0^{1} \wedge  T_{\mathbf x,\delta} \Big| \mathbf N^K(0) =\widetilde{\mathbf M}_K \Big) = 1-s_{1a}, \]
and conditional on the event $\{  T_{\mathbf n^*,\delta} < T_0^{1} \wedge T_{\mathbf x,\delta} \}$,
\[ \lim_{K\to\infty} \frac{T_{{\mathbf n}^*,\delta}^{\mathrm{fix}}}{\log K} = \frac{1}{\wt \lambda} + \frac{1}{\lambda^*} \text{ in probability;} \]
\item[(iii)] while if~\eqref{2invades1} and~\eqref{1invades2} both hold, then 
\[ \lim_{K\to\infty} \P\Big( T_{\mathbf x,\delta}^{\mathrm{fix}} < T_0^{1} \wedge T_{\mathbf n^*,\delta} \Big| \mathbf N^K(0) = \widetilde{\mathbf M}_K \Big) = 1-s_{1a}, \]
and conditional on the event $\{ T_{\mathbf x,\delta} < T_0^{1} \wedge  T_{\mathbf n^*,\delta}^{\mathrm{fix}} \}$,
\[ \lim_{K\to\infty} \frac{T_{\mathbf x,\delta}}{\log K} = \frac{1}{\lambda^*} \text{ in probability.} \]
\end{enumerate}
\end{enumerate}
\end{conj}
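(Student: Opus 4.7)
The plan is to extend the stochastic invasion argument behind Theorem~\ref{theorem-1invasion} past the stopping time $T_\beta$ via a three-phase decomposition: Phase~I is the initial stochastic growth of type~1, handled verbatim by Theorem~\ref{theorem-1invasion}, which yields, with probability $1-s_{1a}+o(1)$, the event $\{T_\beta<T_0^1\}$, on which $T_\beta/\log K\to 1/\widetilde\lambda$ in probability; Phase~II is an intermediate deterministic phase in which all six rescaled coordinates evolve approximately according to~\eqref{6dimvirus}; Phase~III, only needed in case~(i), is a final stochastic extinction phase for the type-2 subpopulation, now small relative to $K$ but still nonzero. On the complementary event $\{T_0^1<T_\beta\}$, which occurs with probability tending to $s_{1a}$, the process stays close to $\widetilde{\mathbf n}$ for long times by the assumed asymptotic stability of $(\widetilde n_{2a},\widetilde n_{2d},\widetilde n_{2i},\widetilde n_3)$ and does not contribute to either $T_{\mathbf n^*,\delta}^{\mathrm{fix}}$ or $T_{\mathbf x,\delta}$.

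For Phase~II, at time $T_\beta$ we have $N_\upsilon^K(T_\beta)\ge\beta$ for every $\upsilon\in\mathcal T$, and a coordinate-wise upper bound on $\mathbf N^K$ follows from standard comparison with a logistic-type process; hence $\mathbf N^K(T_\beta)$ lies in a deterministic compact subset of $(0,\infty)^6$. The classical functional LLN (e.g.~\cite[Theorem~11.2.1]{EK}) then gives uniform approximation on any fixed time interval by a solution $\mathbf n(\cdot)$ of~\eqref{6dimvirus} with coordinatewise positive initial datum. Invoking Conjecture~\ref{conj-fixation}(C), this trajectory converges to $\mathbf n^*$ in case~(i) and to $\mathbf x$ in case~(iii), so some deterministic time $T(\delta)=O(1)$ brings $\mathbf N^K$ to within $\delta/2$ of the target equilibrium with high probability. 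This already settles case~(iii): we obtain $T_{\mathbf x,\delta}\le T_\beta+T(\delta)$ and thus $T_{\mathbf x,\delta}/\log K\to 1/\widetilde\lambda$. In case~(i), the extra requirement $N_{2a}+N_{2d}+N_{2i}=0$ forces Phase~III: at the end of Phase~II, the type-2 population is at most of order $\delta K$ while $(N_{1a}^K,N_{1i}^K,N_3^K)$ is within $\delta$ of $(n_{1a}^*,n_{1i}^*,n_3^*)$, so a sandwich construction analogous to the one behind Proposition~\ref{prop-2firstphase} applies, now comparing $(N_{2a},N_{2d},N_{2i})$ with two copies of the \twoIone-branching process of Section~\ref{sec-2invasion} whose resident concentrations are slightly perturbed around $(n_{1a}^*,n_{1i}^*,n_3^*)$. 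Under the \twoNIone~condition~\eqref{2doesn'tinvade1} this branching process is strictly subcritical, and standard extinction-time estimates for subcritical multi-type branching processes starting from $O(K)$ individuals (as used in~\cite{C+19,BT21}) give an absorption time of $\log K/|\lambda^*|\,(1+o_{\mathrm P}(1))$. Summing Phases~I and~III and absorbing $T(\delta)=O(1)$ yields the claimed limit (with the understanding that the conjecture's $1/\lambda^*$ should be read as $1/|\lambda^*|$, since $\lambda^*$ is negative in the subcritical regime under~\eqref{2doesn'tinvade1}).

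The main obstacle is not any of the stochastic steps above, all of which are essentially standard once their deterministic input is available, but rather the global-stability content of Conjecture~\ref{conj-fixation}(C): convergence of~\eqref{6dimvirus} from interior initial conditions to $\mathbf n^*$ in case~(i) and to $\mathbf x$ in case~(iii). The six-dimensional nonlinearity of the system resists a direct Lyapunov construction, and ruling out non-trivial $\omega$-limit sets such as periodic orbits or heteroclinic cycles between the lower-dimensional invariant faces of the system appears genuinely hard; the local asymptotic stability of the target equilibria is much easier and already follows from Propositions~\ref{prop-fixation} and~\ref{prop-coexeq}, but global attraction is not. The simulations and numerical eigenvalue computations in Section~\ref{sec-afterinvasion} support the conjecture but do not prove it, which is precisely the reason why the statement remains conjectural. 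A secondary but comparatively minor difficulty is the coupling between the end of Phase~II and the subcritical branching approximation in Phase~III of case~(i), namely controlling the probability that type~2 does not regrow to macroscopic size during its residual life and that the residents remain confined near $(n_{1a}^*,n_{1i}^*,n_3^*)$ throughout; both are handled by the same circle of ideas as in~\cite{C+19,BT21} once global attraction is granted.
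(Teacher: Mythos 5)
The statement you are addressing is labelled a \emph{conjecture} in the paper, and the paper deliberately offers no proof of it: Remark~\ref{remark-missing_ingredients} states that the only missing ingredient is the global convergence of the six-dimensional system~\eqref{6dimvirus} from interior initial conditions (i.e.\ Conjecture~\ref{conj-fixation}), and that, given this, the argument would follow the proofs in~\cite{BT20} with an additional subcritical branching-process approximation (\`a la~\cite{C+16}) for the final extinction phase of the former resident. Your three-phase decomposition — Phase~I via Theorem~\ref{theorem-1invasion}, Phase~II via the functional LLN plus global attraction to $\mathbf n^*$ resp.\ $\mathbf x$, Phase~III via a sandwich coupling of the waning type-2 population with the subcritical \twoIone-branching process — is precisely the architecture the authors themselves sketch in Remarks~\ref{remark-missing_ingredients} and~\ref{remark-afterconj2}, and you correctly and honestly isolate the same single genuine obstruction: the global-stability content of Conjecture~\ref{conj-fixation}(C), which neither the simulations nor the local analysis of Propositions~\ref{prop-fixation} and~\ref{prop-coexeq} resolves. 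So what you have written is not a proof (it is conditional on an open conjecture), but it is the correct reduction, and it matches the paper's own intended route rather than offering an alternative one. Your parenthetical that the conjectured time constant $1/\lambda^*$ in case~(i) must be read as $1/|\lambda^*|$, since $\lambda^*<0$ under~\twoNIone, is a fair and useful observation consistent with the heuristic in Remark~\ref{remark-afterconj2}. One minor point to keep in mind if this were ever carried out: the sandwich coupling behind Proposition~\ref{prop-2firstphase} is only valid while the coupled subpopulation has size at most $\eps K$, so Phase~III must be preceded by a deterministic step driving the type-2 mass below $\eps K$ before the branching comparison takes over — which is again exactly where Conjecture~\ref{conj-fixation} is needed.
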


%{\tt \gb In Figure 1: Time horizons kürzen? Bildunterschrift kürzen?\bl }

Recall that we know from Theorem~\ref{theorem-2invasion} that type 2 can invade in cases (ii) and (iii) and cannot invade in case (i), while Theorem~\ref{theorem-1invasion} states \bl that type 1 can invade in cases (i) and (iii) but not in case (ii). 
%According to Theorems~\ref{theorem-2invasion} and~\ref{theorem-1invasion}, in cases (ii) and (iii), type 2 can invade type 3 coexisting type 1, and among these cases, in case (ii) type 1 cannot invade type 3 coexisting with type 2. Conjecture~\ref{conj-theorem2invades1} tells that the latter case corresponds to fixation of type 2 (while type 3 stays in the system), while case (iii) corresponds to stable coexistence. In case (i), type 1 can invade and type 2 cannot, just as in the virus-free case, and Conjecture~\ref{conj-theorem1invades2} states that this indeed corresponds to fixation of type 1 (with type 3 again staying in the system). 
The parameter regimes corresponding to the three cases (i)--(iii), both in case (C) (where type 2 coexists with type 3) and (D) (where it does not) are summarized in Table~\ref{table-cases}.

\medskip

%, and in Figure~\ref{figure-cases} we provide numerical examples for the behaviour of the dynamical system~\eqref{6dimvirus} in each case. The case when only type 1 coexists with type 3 is illustrated by a simulation in Figure~\ref{figure-only1virus}. % We now provide additional biological interpretation for our results and conjectures. \color{red} TBC from here. \color{black}

%Now, let us make some mathematical remarks on our conjectures.

\begin{table}
\small
\begin{tabular}{|lcc|cccc|}  \hline
Case  & $r\kappa\mu_1-v\sigma$ &  Virus \bl &  Coordinatewise  \color{black} & Can type 2 & Can type 1 & Conjectured \\ & & & positive $\mathbf x$ & invade? & invade? & outcome \\ \hline
(C) (i) &	$<0$ & $0<\widetilde n_3<n_3^* < x_3$ & $\nexists$ & 		no & 	yes	& fixation of 1 \\ \hline
(C) (i) & $>0$ & $x_3 <0<\wt n_3<n_3^*$ & $\nexists$ & no & yes & fixation of 1 \\ \hline
(C) (ii) & $<0$  & $0<x_3<\widetilde n_3<n_3^*$ & $\nexists$ & yes & no & fixation of 2 \\ \hline
%%%1d) & $<0$ & $<0$ & $0<n_3^* < x_3 < \widetilde n_3$ & $\nexists$ & no & no & {\footnotesize $\nexists$?} \\ 
%\hline
%2) & $>0$ & $<0$ & $x_3 <0<n_3^*,\wt n_3$ & $\nexists$ & yes & no & fixation of 2 \\ \hline
(C) (iii)  &	$<0$ &$0<\widetilde n_3 < x_3 < n_3^*$ &	$\exists$ &	yes	& yes &	stable coex.\ \\ \hline \hline
(D) (i) &	$<0$ & $0<n_3^* < x_3$ & $\nexists$ & 		no & 	yes	& fixation of 1 \\ \hline
(D) (i) & $>0$ & $x_3 <0<n_3^*$ & $\nexists$ & no & yes & fixation of 1 \\  \hline
(D) (iii)  &	$<0$ &$0 < x_3 < n_3^*$ &	$\exists$ &	yes	& yes &	stable coex.\ \\ 
& & & & & & when 2 invades \\ \hline
%4a) & $>0$ & 	$>0$ & $0<\widetilde n_3 < x_3 < n_3^*$ &	$\exists$ & 	no & 	no & founder control \\
%4b) & $>0$ &	$>0$ & $0<\widetilde n_3,n_3^* < x_3$ & $\nexists$ & 		yes & 	no	& fixation of 2 \\
%4c) & $>0$ & $>0$  & $0<x_3<\widetilde n_3,n_3^*$ & $\nexists$ & no & yes & fixation of 1 \\
%%%4d) & $>0$ & $>0$ & $0<n_3^* < x_3 < \widetilde n_3$ & $\nexists$ & yes & yes & stable coex.???? \\ 
%\hline
\end{tabular}
\caption{Overview of the results of Theorems~\ref{theorem-2invasion} and~\ref{theorem-1invasion}, extended by the conjectured long-term behaviour \bl according to Conjectures~\ref{conj-theorem2invades1} and~\ref{conj-theorem1invades2}, in the case when $\lambda_2<\lambda_1$ and type 1 coexists with the virus \bl in absence of type 2. 
%Recall that case (C) corresponds to the situation when type 2 also coexists with type 3 in absence of type 1, and case (D) to the case when it does not, and that fixation of type 2 is not expected to be possible in case (D) (unless $\lambda_2>\lambda_1$). Both for (C) and (D), two lines correspond to case (i) depending on the sign of $r\kappa\mu_1-v\sigma$: For $r\kappa\mu_1-v\sigma>0$ (which is of course only possible for $r>0$ and $\kappa>0$), it is always only type 1 that can invade, thanks to the assumption that $\lambda_2<\lambda_1$, while for $r\kappa\mu_1-v\sigma<0$, this happens only if $0<\wt n_3,n_3 < x_3$ in case (C) and only if $n_3^*<x_3$ in case (D). In case (D) (iii), the invasion of type 1a against type 2a leads to fixation of type 1, see e.g.~\cite[Lemma 3]{C06} (cf.\ also Section~\ref{sec-only1virus}). 
%{\gb \tt Ich denke, das reicht.}
}\label{table-cases}
\end{table}

\begin{remark}[Missing and available proof ingredients]\label{remark-missing_ingredients}
The only missing ingredient of the proof of Conjectures~\ref{conj-theorem2invades1} and~\ref{conj-theorem1invades2} is the convergence of the dynamical system~\eqref{6dimvirus}. If we had such an assertion, the proof of the conjectures could be completed along the lines of the proofs of the main results of~\cite{BT20},  using additional branching process approximations \bl from~\cite{C+16} 
%not appearing in the proofs of Theorems~\ref{theorem-2invasion} and~\ref{theorem-1invasion} 
for the final phase of the extinction of the former resident population.
\bl
\end{remark}

\begin{remark}[Unsuccessful invasions and critical cases]\label{remark-afterconj} 

For fixed $\beta>0$, considering our initial condition in Theorem~\ref{theorem-2invasion}, we have $T_{\beta} \leq T_{\mathbf{\widetilde n},\delta}^{\mathrm{fix}} \wedge T_{\mathbf x,\delta}$ for all $\delta$ sufficiently small. This readily implies that~\eqref{sublog} also holds for all sufficiently small $\delta>0$ conditional on the event $\{  T_0^2 < T_{\mathbf{\widetilde n},\delta}^{\mathrm{fix}}  \wedge T_{\mathbf x,\delta} \}$ in cases (ii) and (iii). An analogous assertion applies for the reverse invasion direction. Thus, when the \twoNIone\ non-invasion condition\color{black}~\eqref{2doesn'tinvade1} holds, Theorem~\ref{theorem-2invasion} fully describes the invasion of type 2, %(against type 1 coexisting with type 3), 
which is unsuccessful with overwhelming probability. Hence, we left out case (i) from Conjecture~\ref{conj-theorem2invades1} and analogously case (ii) from Conjecture~\ref{conj-theorem1invades2} when~\eqref{1doesn'tinvade2} holds.

Note that (i)--(iii) do not cover the case when the \oneItwo\ and \twoIone-invasion conditions~\eqref{1invades2} or~\eqref{2invades1} hold with equality since then the the equilibrium $(n_{1a}^*,n_{1i}^*,0,0,0,n_3^*)$ resp.\ $(0,0,\wt n_{2a},\wt n_{2d}, \wt n_{2i}, \wt n_3)$ becomes hyperbolic and thus its stability cannot be determined by linearization (equivalently, the corresponding branching process becomes critical).
\bl
\end{remark}

\begin{remark}[Fixation takes longer than reaching six-type coexistence]\label{remark-afterconj2}
In the cases when we expect fixation of type 2 (resp.\ 1) and survival of the virus epidemic, before reaching time $T^{\mathrm{fix}}_{\widetilde{\mathbf n},\delta}$ (resp.\ $T^{\mathrm{fix}}_{\mathbf n^*,\delta}$) there must be an additional phase where type 1 (resp.\ 2) goes extinct. This phase should typically take an additional amount of  $\frac{1}{\wt \lambda} (1+o(1))\log K$ (resp.\ $\frac{1}{\lambda^*} (1+o(1))\log K$) time after time $T_\beta$, $\beta>0$. Indeed, the population size of the waning former resident type can be approximated by the same subcritical branching process as when it is the invader type and during the initial phase of its invasion its population is still small (but not yet extinct). In contrast, in the case when we expect coexistence, there is no such final extinction phase, and $ (T_{\mathbf x,\delta}-T_\beta)/\log K$ should tend to zero in probability conditional on the event $\{ T_{\mathbf x,\delta}<\infty \}$. 
\bl
\end{remark}

\begin{remark}[Correspondence between stability of the small equilibria and six-type coexistence]\label{remark-afterconj3}
If Conjectures~\ref{conj-theorem2invades1} and~\ref{conj-theorem1invades2} hold true, then under the assumption of asymptotic stability of the coordinatewise positive equilibria $(n_{1a}^*,n_{1i}^*,n_{3}^*)$ and $(\wt n_{2a},\wt n_{2d}, \wt n_{2i},\wt n_3)$ of the sub-systems, $\mathbf x$ is always asymptotically stable when it is coordinatewise positive (if $\lambda_2<\lambda_1$), which happens precisely in
case (iii) where the two host types stably coexist with the virus type. Recall that we already know that in cases (i) and (ii) there is no coordinatewise positive equilibrium (cf.\ Proposition~\ref{prop-coexeq}). 
\bl
\end{remark}

\begin{remark}[Type 2 needs type 3 to drive type 1 to extinction, and cannot eliminate type 3]\label{remark-afterconj4}  \hfill The gene\-ral picture is that for $\lambda_2<\lambda_1$, there should be no case where type 2 eliminates type 3, and type 2 should only be able to drive  type 1 to extinction  if type 2 itself coexists with type 3 when being the only host. The reason is that in absence of the virus, and even when the virus population becomes very small on its path to extinction, always the host with a higher reproduction rate (in this case type 1) wins. 
\bl
\end{remark}

\subsection{Simulations and heuristics\bl}\label{sssn-simulations-heuristics}

We now provide case-by-case illustrations \bl of the conjectured behaviour of the dynamical system~\eqref{6dimvirus} and our stochastic process $(\mathbf N(t))_{t\geq 0}$ in the different regimes of fixation and coexistence by a pair of simulations (for the invasion of type 2 resp.\ 1) each. We relate these regimes to the colours used in Figure~\ref{figure-regimes} below, and the choices of parameters in the exemplary simulations are also chosen according to Figure~\ref{figure-regimes}, with the exception of $\lambda_2$ and $q$, which we vary. \bl These parameters are such that $r\kappa\mu_1<v\sigma$, so that stable coexistence is possible for certain values of $\lambda_2<\lambda_1$ and $q>0$ (while founder control can never occur). Moreover, the choice of the parameters 
%but $\lambda_2$ and $q$ 
ensures that type 1 stably coexists with type 3 in absence of type 2. %{\tt\color{red} Simulations and the corresponding values of $\lambda_2$ and $q$ to be added.} {\tt\gb Puh, ich bin erschöpft. Das folgende sollte sicher noch kürzer werden (ich habe schon etwas herausgelöscht, und die Verbindung zu den Bildern und Simulationen klarer (die fehlen ja zum teil noch). Wo könnten die hin?}
For brevity, \bl we refrain from simulations in the biologically less relevant case $\lambda_2>\lambda_1$.

\begin{figure}[!ht]
    \centering
    \includegraphics[width=0.5\linewidth]{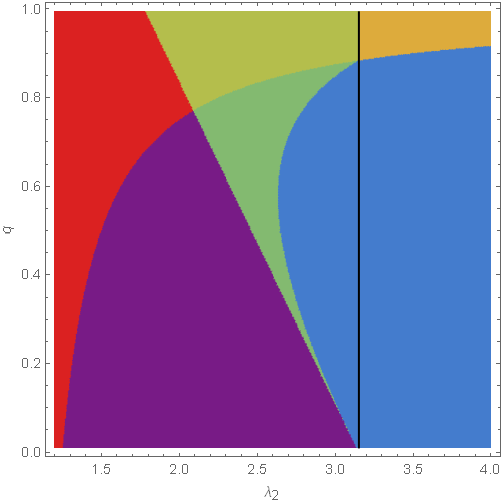}
    \caption{Conjectured invasion outcome depending on $\lambda_2\in(1.2\color{black},4)$ and $q\in(0.01,0.99\color{black})$ for fixed  $\lambda_1=3.15,\mu_1=1, C=1, D=0.5, r=1, v=1, \kappa=0.1, \sigma=2, m=10, \mu_3=0.5$. 
    $\color{red1}\rule{.3cm}{.3cm}$ Red (left): fixation of type 1 (coex.\ with 3), $\color{lightgreen}\rule{.3cm}{.3cm}$ light green (top mid/left): stable 6-dim.\ coexistence (type 2 is not able to coexist with type 3 in absence of type 1)\color{black}, $\color{darkgreen}\rule{.3cm}{.3cm}$ dark green (mid/left):  stable 6-dim.\ coexistence (type 2 is able to coexist with type 3)\color{black}, $\color{orange1}\rule{.3cm}{.3cm}$ orange (top right): fixation of type 2a (without 3), $\color{purple1}\rule{.3cm}{.3cm}$ purple (bottom mid/left): fixation of type 1 (coex.\ with 3), $\color{blue1}\rule{.3cm}{.3cm}$ blue (bottom right): fixation of type 2 (coex.\ with 3). 
   % Since $r\kappa\mu_1 < v\sigma$, stable coexistence is possible for certain values of $\lambda_2<\lambda_1$ and $q>0$. Moreover, type 1 stably coexists with type 3 in absence of type 2.  
   The curve separating red from purple, light green from dark green, and orange from blue corresponds to $\bar n_{2a}=\wt n_{2a}$. Type 2 only coexists with type 3 below this curve. 
    The light green and the orange regime are separated by the line $\lambda_2=\lambda_1=3.15$; below this value of $\lambda_2$, fixation of type 2a without 3 not possible. The dark green area reaches this line at 0, with vanishing width.
    }
    \label{figure-regimes}
\end{figure}

%{\tt \gb Das folgende bis zum Ende von 3.1 ist sehr schwer zu lesen. Wie wäre es, das wie folgt zu strukturieren: Man geht hier systemantsiche alle Fälle durch, die in den Figures 3 und 4 mit verschiedenen Farben auftauchen (zum Beispiel als enumerate Liste), und erwähnt für jeden Fall auch gleich die Farbe im Bild. Das hätte auch den Vorteil, dass die Bildsunterschiften kürzer werden könneten, und wir keine Annotation im Bild brauchen. Man könnte sogar die Farben im Bild wie die im PArtikelsystem am Anfang wählen, aber das ist nicht unbedingt nötig \color{red} Ja, danke, gute Idee! Was hier bis zum Ende von Sec.\ 3.1.\ kommt, ist nicht alles nur die Beschreibung der Fälle, aber vieles schon. Wir können es mit enumerate und zwei Bilder pro Fälle machen. Dafür müssen allerdings die Bilder neu generiert werden, weil die aktuellen Bilder nicht Figure 3 entsprechen. Ich mache jetzt also alles mit den Parametern von Figure 3. Vorher hatte ich Schwierigkeiten, Beispiele für alle Fälle durch die Änderung von nur $\lambda_2$ und $q$ zu kriegen, aber mit Hilfe von Figure 3 ist das leicht. Nur Figure 3 möchte ich nicht mehr ändern, das war ganz schwer (für jemanden, der so schlecht an Programmierung ist wie ich). Die Farben für die Koordinaten können wir noch ändern, aber es ist immer leichter, die Farben im TikZ zu ändern als in der \emph{Mathematica}, wo jedes Mal ein png-Bild exportiert und geschnitten werden muss.}
%\begin{enumerate}
    %\item[(C)] 
    
\noindent {\bf (C) When type is \bl 2 able to coexist with type 3 in absence of type 1.}
 % The case when type 2 also coexists with type 3.}
\begin{itemize}
\item %{\gb $x_3<n_3^*$: \bl 
{\bf The case when type 2 can invade type 1 while coexisting with type 3:} According to Theorem~\ref{theorem-2invasion} reformulated in terms of Proposition~\ref{prop-coexeq}, when $(n_{1a}^*,n_{1i}^*,n_3^*)$ is asymptotically stable, type 2  invades \bl precisely when $x_3<n_3^*$.
%i.e.\ when the emergence of a macroscopic mutant population can substantially reduce the virus population size compared to the case of a monomorphic type 1 host population. 
Let us consider this scenario under the additional assumption that $(\widetilde n_{2a},\wt n_{2d}, \wt n_{2i},\wt n_3)$ is also asymptotically stable.\smallskip
\begin{enumerate}
    \item[(iii)]  $\wt n_3< x_3<n_3^*$: $\color{darkgreen}\rule{.5cm}{.3cm}$
    %\blacksquare
    \bl \textbf{Dark green areas in Figure~\ref{figure-regimes} -- stable six-type coexistence}:  Under the additional assumption $x_3 > \wt n_3$, type 1 can also invade and hence we expect stable six-type coexistence. In this case, we see interesting  trade-offs \bl equilibrating each other (cf.~Figure~\ref{figure-green}).
    \begin{figure}[!ht]
        \centering
        \includegraphics[width=0.38\linewidth]{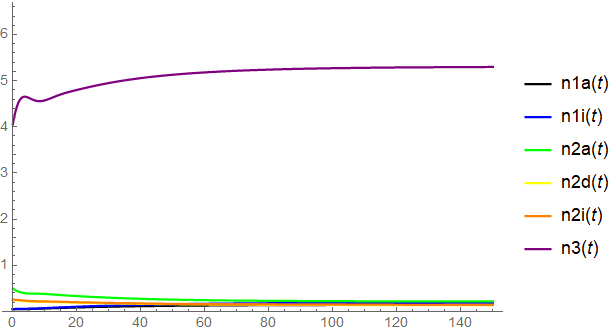} \quad
        \includegraphics[width=0.38\linewidth]{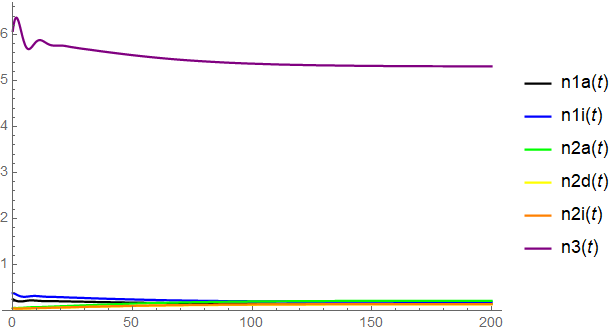} \\
        \includegraphics[width=0.38\linewidth]{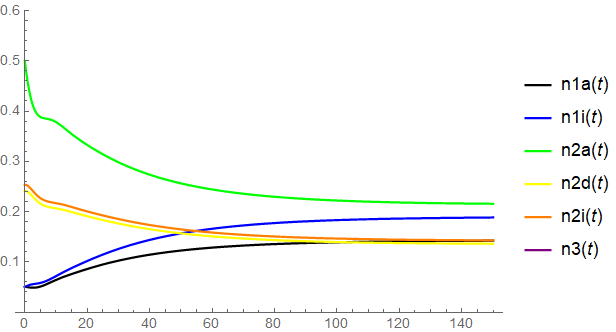} \quad
        \includegraphics[width=0.38\linewidth]{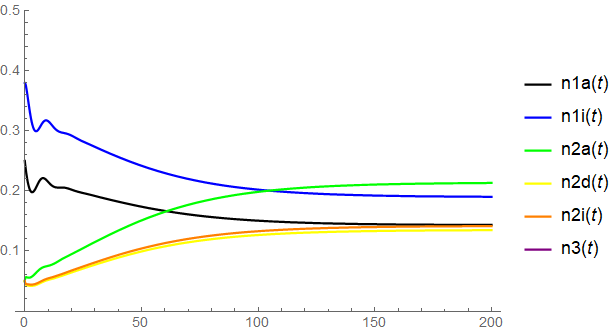}
        \caption{$\color{darkgreen}\rule{.5cm}{.3cm}$ Dark green regime in Figure~\ref{figure-regimes} with \bl $\lambda_2=2.55$ and $q=0.6$: Stable six-type coexistence. Invasion of type 1 against type 2 coexisting with type 3 (starting near $(0,0,\wt n_{2a},\wt n_{2d},\wt n_{2i},\wt n_3)$, left) and of type 2 against type 1 coexisting with type 3 (starting near $(n_{1a}^*,n_{1i}^*,0,0,0,n_3^*)$, right) in the dynamical system~\eqref{6dimvirus}, both letting the solution converge to $\mathbf x$ as $t\to\infty$, in accordance with Conjecture~\ref{conj-fixation}. The bottom images are zoomed versions of the top ones displaying the small coordinates of the solution. Observe in the plots that $\wt n_{3}<x_3<n_3^*$. We checked numerically that here, $\mathbf x$ is indeed at least locally asymptotically stable (the Jacobi matrix has 4 real negative eigenvalues and a conjugate pair of complex eigenvalues with negative real parts).}
        \label{figure-green}
    \end{figure}
    \item[(ii)] $0<x_3 < \wt n_3 < n_3^*$: $\color{blue1}\rule{.5cm}{.3cm}$ \bl\textbf{Blue areas in Figure~\ref{figure-regimes} -- fixation of type 2}: 
    %(figure: $\lambda_2=3<\lambda_1,q=0.2$): 
    In contrast, when $0<x_3 < \wt n_3$ (and $\wt n_3<n_3^*$ since $\lambda_2<\lambda_1$ and $q>0$\color{black}), type 2 has such a strong benefit from \bl  dormancy that type 1 cannot even invade and~\eqref{6dimvirus} has no coordinatewise positive equilibrium. In this case, we expect fixation of type 2 (see Figure~\ref{figure-blue}). For $r\kappa\mu_1<v\sigma$, this regime notably extends to the area where $\lambda_2<\lambda_1$. \bl 
\end{enumerate}
    \begin{figure}[!ht]
        \centering
        \includegraphics[width=0.38\linewidth]{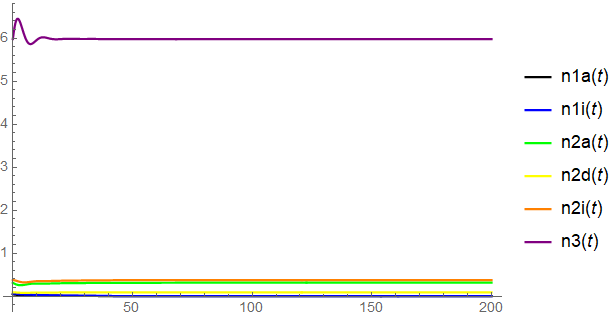} \quad
        \includegraphics[width=0.38\linewidth]{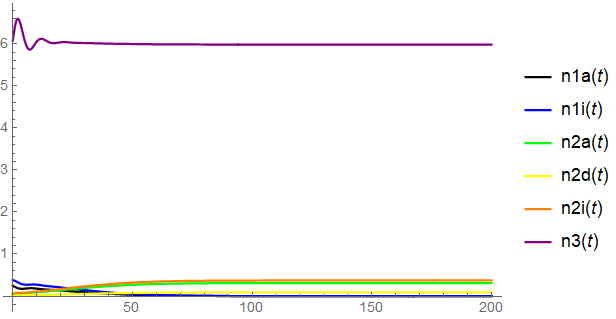} \\
         \includegraphics[width=0.38\linewidth]{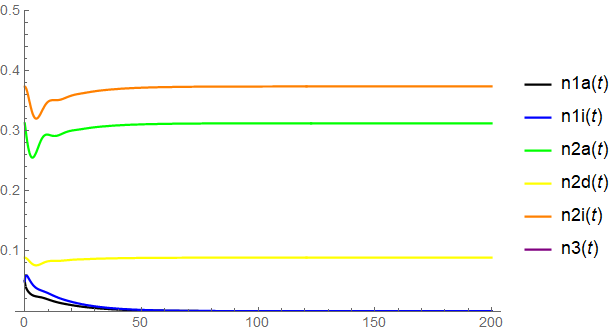} \quad
        \includegraphics[width=0.38\linewidth]{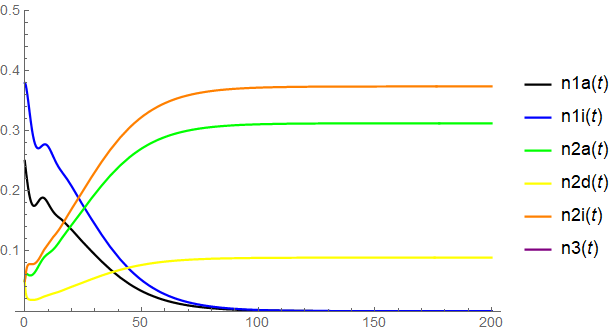}
        \caption{$\color{blue1}\rule{.5cm}{.3cm}$ Blue regime in Figure~\ref{figure-regimes} with \bl $\lambda_2=3<\lambda_1, q=0.2$: Fixation of type~2. The solutions to~\eqref{6dimvirus} now converge to $(0,0,\wt n_{2a},\wt n_{2d},\wt n_{2i},\wt n_3)$. The initial conditions are analogous to Figure~\ref{figure-green}. In this example, we have $x_3<\wt n_{3} < n_3^*$.}\label{figure-blue}
    \end{figure}
\item {$\wt n_{3}<n_3^*<x_3$: \bf The case when type 2 cannot invade type 1 coexisting with type 3}:
\begin{enumerate}
    \item[(i)] $\color{purple1}\rule{.5cm}{.3cm}$ \bl \textbf{Purple areas in Figure~\ref{figure-regimes} -- fixation of type 1}: Here, the evolutionary advantage of type 2 compared to the virus-free case is weak (in other words, $x_3>n_3^*$). Consequently, only type 1 can invade and we expect that it fixates, \bl just as in the \bl absence of the viruses/dormancy (cf.~Figure~\ref{figure-purple}).
    \begin{figure}[!ht]
        \centering
        \includegraphics[width=0.38\linewidth]{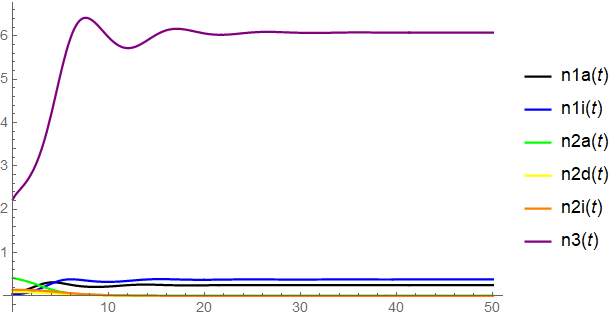} \quad
        \includegraphics[width=0.38\linewidth]{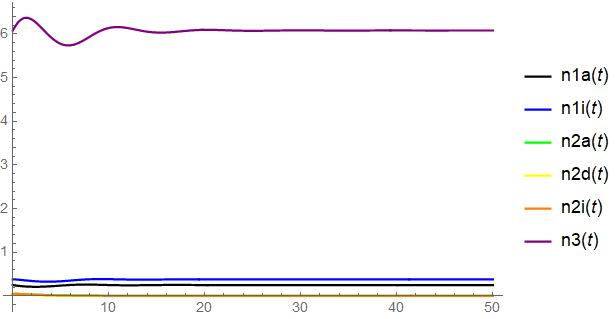} \\
        \includegraphics[width=0.38\linewidth]{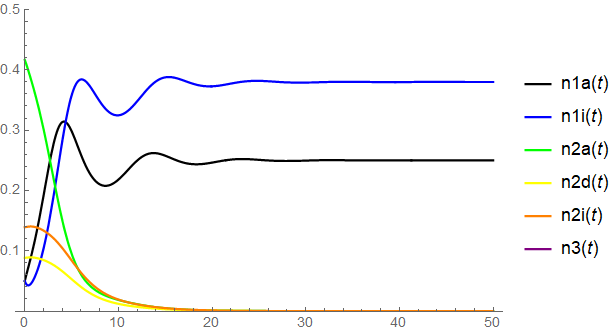} \quad
        \includegraphics[width=0.38\linewidth]{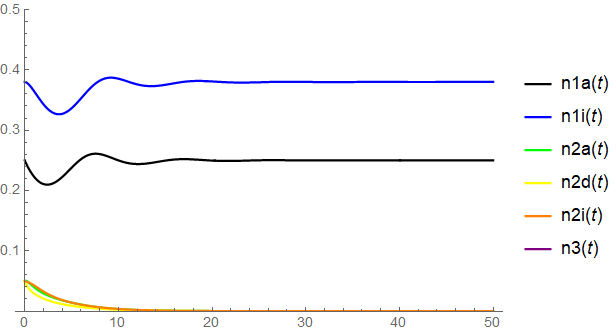}
        \caption{$\color{purple1}\rule{.5cm}{.3cm}$ Purple regime in Figure~\ref{figure-regimes}  with \bl $\lambda_2=2,q=0.4$: Fixation of type 1. The solutions to~\eqref{6dimvirus} now converge to $(n_{1a}^*,n_{1i}^*,0,0,0,n_3^*)$. The initial conditions are analogous to Figure~\ref{figure-green}. In this example, we have $\wt n_3<n_3^*<x_3$.}\label{figure-purple}
    \end{figure}
\end{enumerate}
\end{itemize}

   % \item[(D)] 
   {\bf (D) Type 2 unable to coexist with type 3 in absence of type 1.}
\begin{itemize}
    \item {\bf The case when type 2 can invade type 1 while coexisting with type 3:}
\begin{enumerate}
    \item[(iii)] $0<x_3<n_3^*$: \color{black} $\color{lightgreen}\rule{.5cm}{.3cm}$ \bl {\bf Light green areas in Figure~\ref{figure-regimes} -- stable six-type coexistence}: Note that according to Conjecture~\ref{conj-fixation}, in the case when $\bar n_{2a} > \wt n_{2a}$ (so that $(\wt n_{2a},\wt n_{2d},\wt n_{2i},\wt n_3)$ does not exist) but the \twoIone-invasion condition\bl~\eqref{2invades1} holds, we still expect that~\eqref{6dimvirus} converges to $\mathbf x$ started from any coordinatewise positive equilibrium, and this conjecture is also supported by simulations (see Figure~\ref{figure-lightgreen}). 
    \begin{figure}[!ht]
        \centering
        \includegraphics[width=0.38\linewidth]{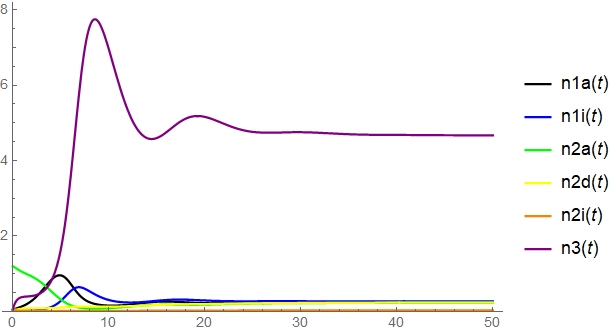} \quad
        \includegraphics[width=0.38\linewidth]{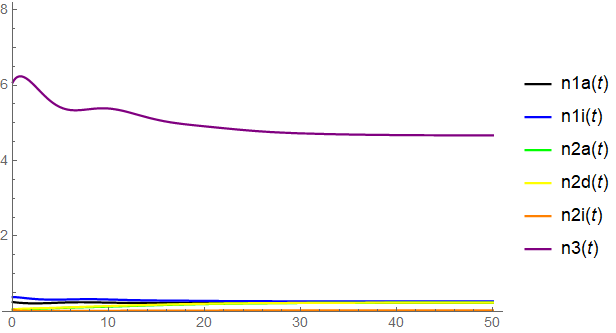} \\
        
        \includegraphics[width=0.38\linewidth]{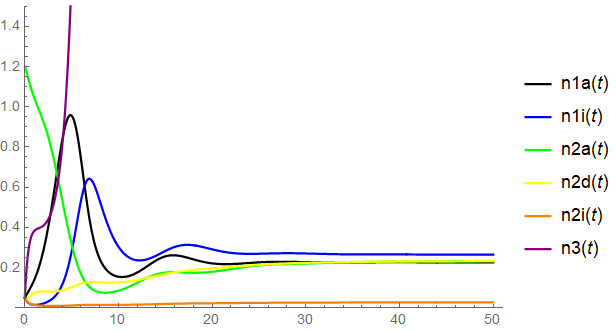} \quad
        \includegraphics[width=0.38\linewidth]{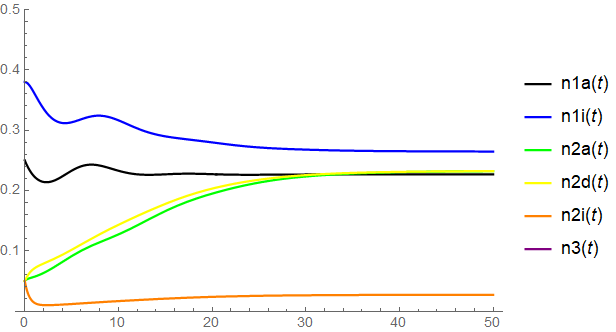}
        \caption{$\color{lightgreen}\rule{.5cm}{.3cm}$ Light green regime in Figure~\ref{figure-regimes}  with \bl $\lambda_2=2.2,q=0.9$: Stable six-type coexistence without four-type coexistence between types 2a, 2d, 2i, and 3. The equilibrium $\mathbf x$ seems to be equal to $\lim_{t\to\infty} \mathbf n(t)$ started from any coordinatewise positive initial condition $\mathbf n(0)$, in particular also close to $\mathbf n(0)$ $(0,0,\bar n_{2a},0,0,0)$ (left) and $(n_{1a}^*,n_{1i}^*,0,0,0,n_3^*)$ (right). We checked numerically that in the example presented here, $\mathbf x$ is also locally asymptotically stable (with exactly one pair of complex conjugate eigenvalues). Observe in the plots that $0<x_3<n_3^*$.}\label{figure-lightgreen}
    \end{figure}
    
   % On the other hand, we discussed in Remark~\ref{remark-2novirus} that an invasion of type 1a against 2a with $N^K_{1a}(0)=1/K$, $N^K_{2a}(0) \approx \bar n_{2a}$, and $N^K_{1i}(0)=N^K_{2d}(0)=N^K_{2i}(0)=N^K_3(0)=0$
% will move $\mathbf N^K(t)$ close to $(\bar n_{1a},0,0,0,0,0)$ with asymptotically positive probability  since $\lambda_1>\lambda_2$. Hence, we expect that the behaviour of $\mathbf N^K(t)$ with such an initial condition is rather different from the one of the dynamical system~\eqref{6dimvirus} started from a \emph{coordinatewise positive} initial condition near $(0,0,\bar n_{2a},0,0,0)$. 

    \item[(ii)] $\color{orange1} \rule{.5cm}{.3cm}$ {\bf Orange areas in Figure~\ref{figure-regimes} -- fixation of type 2a}: This regime only occurs in the generally excluded case $\lambda_2>\lambda_1$. Here, we expect that started from any coordinatewise positive initial condition, the system~\eqref{6dimvirus} will converge to $(0,0,\bar n_{2a},0,0,0)$. We expect that this complete eradication of a virus epidemic via an invasion of type 2 against type 1 coexisting with type 3 is not possible for $\lambda_2<\lambda_1$, since if all subpopulations but the type 1a and 2a  had vanished, \bl  type 1a would take over. \bl
\end{enumerate}
\end{itemize}
\begin{itemize}
    \item {\bf The case when type 2 cannot invade type 1 coexisting with type 3:}% In the case when~\eqref{4dimvirus} has no equilibrium with four positive coordinates  but $0<x_3<n_3^*$ still holds, we also expect stable coexistence. Here, although type 2 does not coexist with type 3 in absence of type 1, but the two host types and the virus type together coexist. The reason why type 1a can always invade type 2a in this case is simply that $\lambda_2<\lambda_1$ (cf.\ Remark~\ref{remark-2novirus}). Simulations suggest that~\eqref{6dimvirus} converges to  $\mathbf x$ started from any coordinatewise positive initial condition, but when only the type 1a and 2a coordinate of the initial condition is positive, it converges to $(\bar n_{1a},0,0,0,0,0)$, since $\lambda_2>\lambda_1$.
\begin{enumerate}
    \item[(i)] $0<n_3^*<x_3$: \color{black} $\color{red1} \rule{.5cm}{.3cm}$  \textbf{Red areas in Figure~\ref{figure-regimes} -- type 2 does not coexist with type 3, fixation of type 1}: Here, type 2 cannot invade type 1, the reason being that its birth rate $\lambda_2$ is too small compared to $\lambda_1$ (even for large values of $q$). Thus, started from a coordinatewise positive initial condition, the system~\eqref{6dimvirus} quickly converges to $(n_{1a}^*,n_{1i}^*,0,0,0,n_3^*)$, see Figure~\ref{figure-red}. %(Actually, it seems from simulations that for this convergence, it suffices if the initial amount of type 1 individuals and the initial total amount of infected individuals and virions is positive.) 
    %When
    %initially only types 1a and 2a are in the system, the solution again tends to $(\bar n_{1a},0,0,0,0,0)$ since $\lambda_1>\lambda_2$.
     \begin{figure}[!ht]
        \centering
        \includegraphics[width=0.38\linewidth]{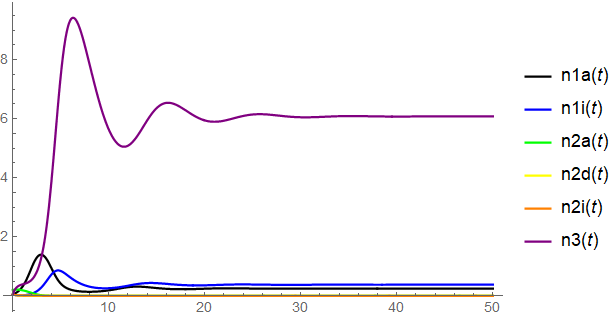} \quad
        \includegraphics[width=0.38\linewidth]{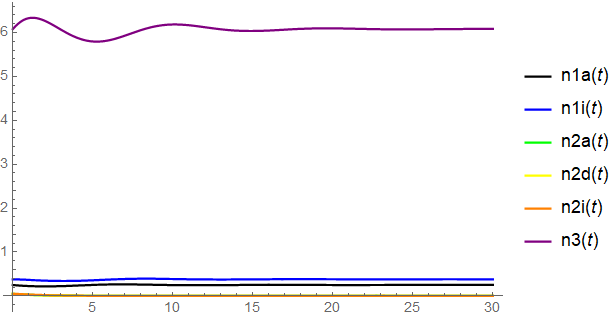} \\
        \includegraphics[width=0.38\linewidth]{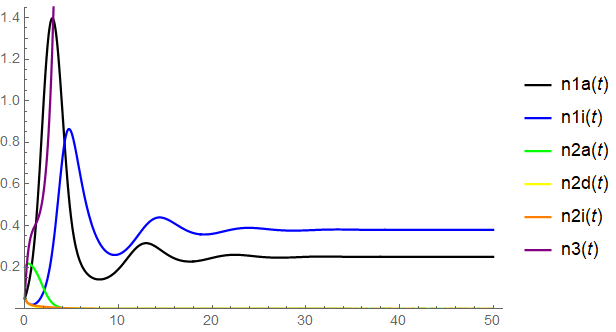} \quad
        \includegraphics[width=0.38\linewidth]{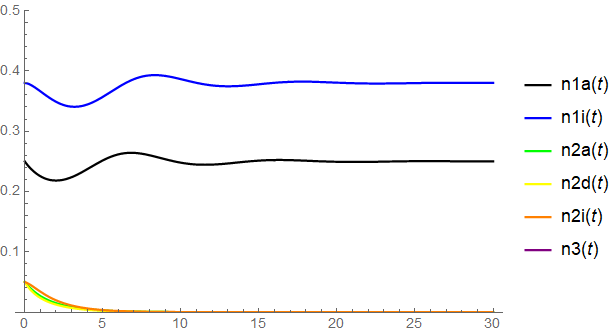}
        \caption{$\color{red1}\rule{.5cm}{.3cm}$ Red regime in Figure~\ref{figure-regimes}  with \bl $\lambda_2=1.2,q=0.4$: Fixation of type 1 (without type 2 coexisting with type 3). The solutions to~\eqref{6dimvirus} started from coordinatewise initial conditions now all converge to $(n_{1a}^*,n_{1i}^*,0,0,0,n_3^*)$. The initial conditions in the plots are close to $(0,0,\bar n_{2a},0,0,0)$ (left) resp.\ $(n_{1a}^*,n_{1i}^*,0,0,0,n_3^*)$ (right) but coordinatewise positive. In this example, we have $0<n_3^*<x_3$. %In the bottom figure, we zoom into the top left figure in order to exhibit the initial rapid growth of types 1a, 1i, and 3 and decay of types 2a, 2d, and 2i. %{\tt\gb Das letzte zoom-Bild kann man vielleicht doch weglassen...}
        }\label{figure-red}
    \end{figure}
\end{enumerate}
\end{itemize}

We see in Figure~\ref{figure-regimes} that for $r\kappa\mu_1<v\sigma$,
fixation of type 2 is possible for $\lambda_2<\lambda_1$ and stable coexistence is also possible (only) in this case. Moreover, type 1 always goes extinct with asymptotically positive probability for $\lambda_2>\lambda_1$ (while type 3 stays in the system in case it coexists with type 2 in absence of type 1), just as in absence of dormancy. This shows a clear advantage of contact-mediated dormancy, which we already interpreted in Remark~\ref{remark-afterconj}. For $r\kappa\mu_1 > v \sigma$, the situation is rather different, see Appendix~\ref{appendix-regimes} for a short related discussion including Figure~\ref{figure-regimesnocoex}, the corresponding analogue of  Figure~\ref{figure-regimes}. \color{black}

\section{Discussion}\label{sec-discussion}

In this section, we discuss our model and results, and embed them into the literature. Indeed, in Section~\ref{ssn-related-literature} we review related models studied in prior work and in Section~\ref{ssn-modeling-choices} we comment on our modelling choices.   Finally, in Section~\ref{sec-bpdiscussion} we summarize the relation between the stability of equilibria of the dynamical system~\eqref{6dimvirus} and the critical behaviour of the approximating branching processes. We conclude with Section~\ref{sec-futurework} on potential future work.

%\subsection{Related literature and modelling choices}\label{sec-literature}

\subsection{Related literature\bl}\label{ssn-related-literature}
Host dormancy as a defense strategy in host-virus systems has been described \bl in multiple experimental studies in recent years. 
For example, according to  \cite{JF19} and \cite{MNM19}, infected bacteria can enter a dormant state as part of a CRISPR-Cas immune response, thus curbing phage epidemics. Furthermore, Bautista et al \cite{B15} suggested that dormancy of hosts may even be initiated upon mere contact of virus particles with their cell hull, so that the dormant host is not susceptible to the infection anymore. They  investigated this for \bl  populations of {\em Sulfolobus islandicus} (an archeon), which may switch almost entirely into dormancy within hours after being exposed to the {\em Sulfolobus spindle-shape virus SSV9}, even in the case of a relatively small virus-to-host ratio.

Gulbudak and Weitz \cite{GW15} provide an ODE-based biophysical model including contact-mediated host dormancy for the ``early stages'' (covering a few hours) of the above host--virus dynamics. Their deterministic model  is able to reproduce the experimentally \bl observed rapid switches into dormancy for relatively small virus-to-host ratios, given that the parameter corresponding to  the \bl  dormancy initiation probability $q$ is large enough. 
However, their model is focused on a relatively short time-window of  host--virus dynamics  and does not include \bl the model of~\cite{BK98} as a special case. A similar model involving not only lytic but also chronic virus infections was studied in a paper by the same authors~\cite{GW18}. The key differences between the models of~\cite{GW15} and our previous paper~\cite{BT21} were discussed in \cite[Section 2.6.5]{BT21}. %There, we also commented on some modelling choices, which we now recall and extend as follows.

\subsection{Remarks on modelling choices}\label{ssn-modeling-choices}
%\begin{remark}\label{rem-modellingchoices}
The transitions that we include in our model in Section~\ref{sec-modeldefresultsvirus} are  certainly \bl not the only ones that one might deem plausible for a minimal model for the invasion of dormancy during a persistent epidemic. %Here, we comment on some choices. 
Our general guiding principle was to follow and extend previous models if possible, in particular those in~\cite{BK98, GW15, BT21}, so that our results can be compared to these (simpler) setups. 

In particular, we made the following assumptions: 
%Let us now make a few remarks on the modelling choices of the present paper.
\begin{itemize}
    \item We allow for host death while in the dormant state, i.e.\ assume  $\kappa>0$. This seems plausible, but it turns out that when $\lambda_2<\lambda_1$, the behaviour of the system for $\kappa=0$ is mathematically not very different from the case $\kappa>0$. This was already observed in the setting of~\cite{BT21}. 
    \item In~\cite{BK98} %\color{green} 
    (unlike in~\cite{BT21} and in the present paper) it was assumed that infected cells cannot recover, i.e.\ $r=0$. We note that the proofs of all assertions on the dynamical systems~\eqref{6dimvirus}, \eqref{3dimvirus}, \eqref{4dimvirus} also apply for $r=0$. This way, under our standing assumption that $\lambda_2<\lambda_1$, assuming $r=0$ does not reduce the richness of possible behaviours of the system substantially. However, for $\lambda_2>\lambda_1$, founder control can only occur if $r\kappa\mu_1>v\sigma$, which requires $r,\kappa>0$. Unfortunately, extending the proof of  Theorems~\ref{theorem-2invasion} and~\ref{theorem-1invasion} to the case $r=0$ would require additional efforts because the mean matrices of the approximating branching processes introduced in Sections~\ref{sec-2invasion} and~\ref{sec-1invasion} are not irreducible.  We do not expect any substantial difficulties in this case, but the proofs would certainly become significantly longer and more technical, and so we refrain from carrying them out in this paper.
    \color{black}
 %{\tt\gb Reihenfolge vertauscht.}
    \item We follow~\cite{GW15} regarding the specifics of the competitive interactions. In particular,  we assume that infected and dormant hosts do not feel competitive pressure but they exert competitive pressure on active hosts; cf.\ also \cite[Section 2.6.5]{BT21}. This could  in principle \bl be changed, but as stated above, we aim to stay in line with previous models.
    %{\tt Macht es eigentlich mathematisch einen grossen Unterschied? Ist vielleicht schwer zu sagen...} {\tt \color{red} Ich glaube, solange die infizierten und dormanten Individuen die Kompetition selbst nicht fühlen, macht es keinen großen Unterschied, z.B.\ sollten die Formeln für die ``einfachen'' Koordinaten der Äquilibrien ($n_{1a}^*, \wt n_{2a}, x_3)$ unverändert bleiben. Natürlich ändern sich dann die anderen Koordinaten schon und einiges muss umgeschrieben werden...}\bl 
   
\end{itemize}

%{\tt\gb Wollen wir das wirklich machen ? ;-) \color{red} Das hatten Charline und ich vor, aber am Ende betrachten wir vielleicht nur leichtere Systeme mit Periodizität.}

\subsection{Discussion of the approximating branching processes and existence/stability of the \bl equilibria}\label{sec-bpdiscussion}
We see from Proposition~\ref{prop-fixation} and Sections~\ref{sec-2invasion} and~\ref{sec-1invasion} that as long as $(n_{1a}^*,n_{1i}^*,n_3^*)$ is asymptotically stable for~\eqref{3dimvirus}, the \twoIone-branching process is supercritical if and only if $(n_{1a}^*,n_{1i}^*,0,0,0,n_3^*)$ is hyperbolically unstable, while it is subcritical whenever $(n_{1a}^*,n_{1i}^*,0,0,0,n_3^*)$ is hyperbolically asymptotically stable. Similarly, as long as $(\wt n_{2a},\wt n_{2d}, \wt n_{2i}, \wt n_3)$ is asymptotically stable for~\eqref{4dimvirus}, the \oneItwo-branching process is supercritical if and only if $(0,0,\wt n_{2a},\wt n_{2d},\wt n_{2i},\wt n_3)$ is hyperbolically unstable and it is subcritical whenever $(0,0,\wt n_{2a},\wt n_{2d},\wt n_{2i},\wt n_3)$ is hyperbolically asymptotically stable.

Given this equivalence, Theorems~\ref{theorem-2invasion} and~\ref{theorem-1invasion} yield that for $i,j \in \{ 1, 2\}$, $i \neq j$, type $i$ can invade type $j$ coexisting with type $3$ if and only if the corresponding 
%$(i|j,3)$-
branching process is supercritical,  or, equivalently, if and only if the corresponding equilibrium where the coordinates belonging to type $i$ are zero and the ones belonging to types $j$ and $3$ are in stable equilibrium is hyperbolically unstable. In particular, for $\lambda_2<\lambda_1$ we do not observe any case where both branching processes are subcritical since this would mean $n_3^* < x_3 < \wt n_3$, which would contradict Proposition~\ref{prop-coexeq}\color{black}. %{\tt\gb Nochmal explizit sagen, woran das scheitert?} 
%we have founder control if and only if both branching processes are subcritical (equivalently, both $(n_{1a}^*,n_{1i}^*,0,0,0,n_3^*)$ and $(0,0,\wt n_{2a},\wt n_{2d},\wt n_{2i},\wt n_3)$ are locally asymptotically stable, see case 4a) in Table~\ref{table-cases}).
The only case where both branching processes are supercritical is when~\eqref{2invades1} and~\eqref{1invades2} both hold, and here we expect stable coexistence (cf.\ Conjectures~\ref{conj-fixation}, \ref{conj-theorem2invades1}, and~\ref{conj-theorem1invades2}). Again, it is remarkable that this is the only case where the coexistence equilibrium $(x_{1a},\ldots,x_3)$ exists (for $\lambda_2<\lambda_1$). In any other case,~\eqref{6dimvirus} has no coordinatewise positive equilibrium, and we expect fixation of one of the two host types while in coexistence with type 3. 

However, rephrasing the observations of Section~\ref{sec-lambda2lambda1}, it follows that for $\lambda_2>\lambda_1$, the two branching processes can be simultaneously subcritical, and in that case, $\mathbf x$ is also coordinatewise positive (and we conjecture that it is unstable).

\subsection{Perpectives for future work}\label{sec-futurework}

On the technical side, we do not know whether there are parameter regimes where the six-dimensional dynamical system~\eqref{6dimvirus} exhibits periodic or chaotic behaviour with all the six types being persistent, and we defer the study of such questions to future work. It seems that in case the sub-systems~\eqref{3dimvirus} and~\eqref{4dimvirus} both have limit cycles, the numerical behaviour of the six-dimensional system behaves unstable and naive simulation methods are not sufficient. Moreover, even in the case when~\eqref{3dimvirus} and~\eqref{4dimvirus} do not behave periodically, the conjectured global stability of equilibria and convergence of solutions to~\eqref{6dimvirus} towards the stable equilibria started from distant initial conditions are open. 

More generally, our model provides a first step to include contact-mediated dormancy into models from stochastic adaptive dynamics. Many other forms of dormancy are possible (e.g.\ virus latency). The model can thus be seen as a further stepping stone for the systematic inclusion of dormancy mechanisms into eco-evolutionary and epidemiological models.

\appendix

\section{Proof of Propositions~\ref{prop-coexeq} and~\ref{prop-coexeqonly1virus}}\label{sec-coexeq} %\color{red} Check proof. %, it's a bit old. Can be shortened a bit via introducing the vector notation $\mathbf x$. \color{black} \color{red} Need to check also if ``$\mathbf x$ exists'' and ``$\mathbf x$ is coordinatewise positive'' are not used in a contradictory way. 
%\color{black}
In this section we provide the proofs of our two propositions  concerned with \bl the existence of a coordinatewise positive equilibrium of~\eqref{6dimvirus}. %, starting with the one of Proposition~\ref{prop-coexeq}. 
\begin{proof}[Proof of Proposition~\ref{prop-coexeq}]
Assume that~\eqref{6dimvirus} has a nonzero equilibrium $\mathbf x=(x_{1a},x_{1i},x_{2a},x_{2d},x_{2i},x_3)$. 
% and we deduce from this that $\lambda_1=\lambda_2$. %First, since by the second, fourth, and fifth equation of~\eqref{6dimvirus}, we have
%\[ \frac{x_{1i}}{x_{1a}} = \frac{Dn_3}{r+v},  \qquad \frac{x_{2d}}{x_{2a}} = \frac{q Dn_3}{\kappa\mu_1+\sigma}, \qquad \frac{x_{2i}}{x_{2a}} = \frac{(1-q)D n_3}{r+v}, \]
%it follows that 
%\[ \frac{x_{1i}}{x_{1a}}, \frac{x_{2d}}{x_{2a}}, \text{ and } \frac{x_{2i}}{x_{2a}} \text{ have the same sign as $x_3$}. \]
%Using this, d
Dividing the last equation of~\eqref{6dimvirus} by $x_3 \neq 0$  and substituting the expressions for $n_{1i}$ and $n_{2i}$ from the second and fifth equation, \bl  we obtain
\[ -D x_{1a}-(1-q)D x_{2a} +\frac{m v}{r+v} (D x_{1a}+(1-q) D x_{2a} )- \mu_3 = 0, \]
which yields that
\[ x_{1a} + (1-q)  x_{2a} = \frac{\mu_3(r+v)}{D(mv-(r+v))}, \numberthis\label{12pos} \]
i.e.\ equation~\eqref{xasum}, in case $mv \neq r+v$.
From this it follows that the existence of a coordinatewise positive $\mathbf x$ implies that $mv>r+v$, which we  henceforth \bl assume for the rest of the proof. %; this is not surprising because this condition was also needed for the existence of the coexistence equilibrium of types 1 and 3 as well as the one of types 2 and 3.) 
Now, considering the first equation of~\eqref{6dimvirus} divided by $x_{1a}$  and substituting the expressions for $n_{1i}$  from the second  equation\bl, we find that
\[ \lambda_1-\mu_1 - C(x_{1a}+x_{1i}+x_{2a}+x_{2d}+x_{2i} ) = D x_3 \frac{v}{r+v}. \numberthis\label{1pos} \]
Similarly, from the third equation of~\eqref{6dimvirus} divided by $x_{2a}$  and substituting the expressions for $n_{2i}$ and $n_{2d}$ from the fourth and fifth equation, \bl we obtain
\[ \lambda_2 - \mu_1 - C(x_{1a}+x_{1i}+x_{2a}+x_{2d}+x_{2i} ) = D x_3 \big( 1 - \frac{r}{r+v} (1-q) - \frac{\sigma}{\kappa\mu_1+\sigma} q \big). \numberthis\label{2pos} \]
This implies,  by subtracting \eqref{1pos} from \eqref{2pos}, \bl for $r\kappa\mu_1 \neq v \sigma$ and $q>0$, that
\[ x_3 = \frac{\lambda_2-\lambda_1}{qD}
\frac{(\kappa\mu_1+\sigma)(r+v)}{r\kappa\mu_1-v\sigma},  \]
which is~\eqref{x3def}. %{\tt \gb This again becomes very simple for $r=0=\kappa$. Can we interpret the value of the equilibrium size of the virions plausibly in this case? It depends on the reproductive trade-off... %\color{red} Yes,~\eqref{2invades1showSimple1} is nice, but I'm not sure how to interpret $\frac{\lambda_1-\lambda_2}{qD}$. 
%}
Moreover, from the second and fifth equation of~\eqref{6dimvirus}, and \eqref{12pos}, we get \bl 
\[ x_{1i}+x_{2i} = \frac{\mu_3 x_3}{mv-(r+v)},  \]
which is~\eqref{xicond}. 
Thus, %{\tt \gb reason? \color{red} Indeed, thanks, let's postpone this uniqueness a bit. \color{black} \bl} 
%either there is a unique solution for the linear system of equations characterizing $\mathbf x$ or there is none. 
%without \gb assuming \bl the positivity of the coordinatewise nonzero equilibrium 
a coordinatewise nonzero equilibrium $\mathbf x$ \color{black} exists whenever $\lambda_2-\lambda_1 \neq 0$, $q >0$, $mv - (r+v)\neq 0$ and $r\kappa\mu_1 -v \sigma \neq 0$. 

The positivity of $x_3$ additionally requires that $r\kappa\mu_1-v\sigma$ and $\lambda_2-\lambda_1$ have the same sign.
%For $\lambda_2 \neq \lambda_1$, a necessary condition for the existence of a coexistence equilibrium is that
%\[ r\kappa\mu_1-v\sigma \text{ must have the same sign as } \lambda_2-\lambda_1 . \]
Since $(x_{1a},x_{1i},x_3)$, $(\wt n_{2a},\wt n_{2d},\wt n_{2i},\wt n_3)$, and $\mathbf x$ are coordinatewise nonzero equilibria of~\eqref{3dimvirus}, \eqref{4dimvirus}, and~\eqref{6dimvirus} respectively, \color{black}
we have
\[ \frac{x_{1i}}{x_{1a}} = \frac{D x_3}{r+v}, \qquad \frac{x_{2i}}{x_{2a}} = \frac{(1-q)D x_3}{r+v}, \qquad \frac{x_{2d}}{x_{2a}} = \frac{q D x_3}{\kappa\mu_1+\sigma}, \numberthis\label{iadproportions} \]
and thus \bl it follows that whenever $x_3>0$, $x_{1a}$ has the same (nonzero) sign as $x_{1i}$, and $x_{2a},x_{2d},x_{2i}$ all have equal (nonzero) signs. %{\tt \gb How could there be different sign? Do we allow negative components? And zero is always an equilibrium. Are we having more than one? Contrast to statements above? \color{red} In the very beginning of the proof we have the standing assumption that $\mathbf x$ is coordinatewise nonzero. Now it is hopefully mentioned sufficiently often.} 

We see that $x_3$ is uniquely determined by the parameters. Thanks to~\eqref{xasum},~\eqref{xicond}, and~\eqref{iadproportions}, given the value of $x_{3}$, the values of  %$x_{1i}+x_{2i}$, $x_{1a}+(1-q)x_{2a}$, $\frac{x_{1i}}{x_{1a}}, \frac{x_{2i}}{x_{2a}}$, and $\frac{x_{2d}}{x_{2a}}$
$x_{1a},x_{1i},x_{2a},x_{2d},x_{2i}$ are determined
uniquely via a system of linear equations on $x_{1a},x_{1i},x_{2a},x_{2d},x_{2i}$ whose matrix is full rank. It follows that there is a unique coordinatewise nonzero equilibrium $\mathbf x$ (under the aforementioned conditions under which there exists such an equilibrium). \color{black}

Now, let us assume that $(n_{1a}^*,n_{1i}^*,n_3^*)$ is a coordinatewise positive equilibrium of~\eqref{3dimvirus}. Then, from the first equation of~\eqref{3dimvirus} we obtain \color{black}
%we have {\tt \gb again, briefly explain why, similar as above \bl}
\[ \lambda_1 -\mu_1 - C(n_{1a}^* + n_{1i}^* ) = D n_3^* \frac{v}{r+v}. \numberthis\label{1star} \]
Hence by~\eqref{xasum} and the characterization in the coexistence condition \oneC \ \bl of $n_{1a}^*$ in~\eqref{viruscoexcondq=0}, if $\mathbf x$ is coordinatewise positive, then we have 
\[ x_{1a} + x_{2a} = x_{1a} + (1-q) x_{2a} + q x_{2a} > n_{1a}^*,  \]
and since $x_{2d}$ is assumed positive and we have
\[ \frac{n_{1i}^*}{n_{3}^*} = \frac{\mu_3}{mv-(r+v)} = \frac{x_{1i}+x_{2i}}{x_3} \numberthis\label{1xproportions} \]
(cf.~also~\eqref{xicond}),~\eqref{1pos} and \eqref{1star} can only hold simultaneously if $x_3 < n_3^*$. This argument also implies that if $x_3>n_3^*$, then $x_{2a},x_{2d}$, and $x_{2i}$ must be negative.

Similarly, if $(\wt n_{2a}, \wt n_{2d}, \wt n_{2i}, \wt n_3)$ is a coordinatewise positive equilibrium of~\eqref{4dimvirus}, then from the first equation of~\eqref{4dimvirus} we obtain \color{black}
\[ 
\lambda_2- \mu_1 - C(\wt n_{2a} + \wt n_{2d} + \wt n_{2i} + \wt n_3) = D \wt n_3 \Big( 1-\frac{r}{r+v}(1-q) - \frac{\sigma}{\kappa\mu_1+\sigma} q \Big). 
\numberthis\label{2tilde} 
\]
Now, by~\eqref{xasum} and the characterization in the coexistence condition \twoC \ \bl  of $\widetilde n_{2a}$ from~\eqref{viruscoexcond2}, if $\mathbf x$ is coordinatewise positive, then 
\[ 
x_{1a} + x_{2a} = x_{1a} + (1-q) x_{2a} + q x_{2a} = (1-q) \wt n_{2a} + q x_{2a} < \wt n_{2a}, 
\]
where in the last step we used that $x_{2a} < \wt n_{2a}$ must be satisfied because otherwise we would have
\[ 
(1-q) x_{2a} + x_{1a} > (1-q) x_{2a} \geq (1-q) \wt n_{2a}, 
\]
a contradiction with \eqref{xasum} and the definition of $n_{1a}^*$ (cf.\ \eqref{viruscoexcondq=0}). Now, since 
\[ \frac{\wt n_{2i}}{\wt n_3} = \frac{\mu_3}{mv-(r+v)} = \frac{x_{1i}+x_{2i}}{x_3} \numberthis\label{2xproportions} \]
(where we also used~\eqref{xicond}) and
\[ \frac{\wt n_{2d}}{\wt n_{2i}} = \frac{x_{2d}}{x_{2i}} = \frac{q(r+v)}{(1-q)(\kappa\mu_1+\sigma)}, \numberthis\label{dormantproportions} \]
\eqref{2tilde} and~\eqref{2pos} can only hold simultaneously if $x_3 > \wt n_{3}$. From this argument, it also follows that if $x_3 < \wt n_{3}$, then $x_{1a}$ and $x_{1i}$ must be negative. We conclude that if $0<n_3^* < x_3 < \wt n_3$, then all coordinates of $(x_{1a},\ldots,x_3)$ but $x_3$ must be negative. However, this contradicts~\eqref{xasum}, and 
thus \bl 
%hence this case cannot occur, which 
implies assertion (2) of the proposition.

Now, similar arguments yield \bl that the converse is also true: If $x_3,n_3^*$, and $\wt n_{3}$ are all well-defined and positive, and $\wt n_{3}<x_3 < n_{3}^*$, then $\mathbf x$ is coordinatewise positive. Indeed, if $x_{2a}$ were non-positive, then $x_{1a}+(1-q)x_{2a}= n_{1a}^*$ would imply that $x_{1a}+x_{2a} \leq n_{1a}^*$. Now, since~\eqref{1xproportions} still holds and $x_{2d}$ has the same sign as $x_{2a}$ (and $x_{2i}$), arguing analogously to the case when we assumed that $\mathbf x$ is coordinatewise positive, we could derive that $x_3 \geq n_{3}^*$, contradicting our assumption. Also, if $x_{1a}$ were non-positive, then $x_{2a}$ would have to be positive, and hence $x_{1a} + (1-q) x_{2a} = (1-q) \wt n_{2a}$ would imply that $x_{2a} \geq \wt n_{2a}$. But from this, it would follow that $x_{1a} + x_{2a} = (1-q) \wt n_{2a} + q x_{2a} \geq \wt n_{2a}$. From this, using~\eqref{2xproportions}, \eqref{dormantproportions}, and the fact that $\wt n_{2i}>0$ (since $\wt n_{2a}>0$), we could conclude that $x_3 < \wt n_{3}$ and thus we would obtain another contradiction. Hence, the proof of (1) is finished.
\end{proof}

Based on this proof, we now also complete the proof of Proposition~\ref{prop-coexeqonly1virus}.

\begin{proof}[Proof of Proposition~\ref{prop-coexeqonly1virus}]
Assume that the coexistence condition\bl~\eqref{viruscoexcondq=0} is not satisfied but $\mathbf x=(x_{1a},x_{1i},x_{2a},x_{2d},x_{2i},x_3)$ is still coordinatewise positive. This is clearly impossible when $mv \leq r+v$ because then e.g.\ \eqref{xasum} cannot hold. Let us now assume for the rest of the proof that $mv > r+v$.
Then, \[ \bar n_{1a} = \frac{\lambda_1-\mu_1}{C} < n_{1a}^* = x_{1a} + x_{2a}. \]
It follows that the left-hand side of~\eqref{1pos} is negative, so that since~\eqref{1pos} must hold, $x_3$ cannot be positive. We conclude that in this case, \eqref{6dimvirus} has no coordinatewise positive equilibrium, as we claimed in (1).

Assume now that $r\kappa\mu_1 \neq v\sigma$. When~\eqref{virusnoncoexcond2} holds, we need $x_3 < \widetilde n_3^*$ in order to obtain coordinatewise positivity of $\mathbf x$ analogously to the case when~\eqref{viruscoexcond2} holds (cf.\ the proof of Proposition~\ref{prop-coexeq}). Since $(\wt n_{2a},\wt n_{2d},\wt n_{2i},\wt n_3)$ does not exist as a coordinatewise positive equilibrium, the fourth or fifth equation of~\eqref{6dimvirus} yield no additional condition, apart from the trivial condition that $x_3$ must be positive. This completes the proof of (2).
\end{proof}

\section{Proof of Theorem~\ref{theorem-2invasion}}\label{sec-proofdetails}

To carry out the program sketched in Section~\ref{sec-proofsketch}, for {\bf Step i)} \bl
%To start the sketch of proof of Theorem~\ref{theorem-1invasion}, 
we define, for each $\eps>0$, \bl  the stopping time
\[ R_{\eps} = \inf \big\{ t \geq 0 \colon 
\max \{ |N_{1a}^K(t) - n_{1a}^*|, |N_{1i}^K(t)-n_{1i}^*|, |N_{3}^K(t)-n_3^*| \} > \eps\big\}, \]
i.e.\ the first time when the rescaled population size of some of the resident types 1a, 1i, and 3 leaves an $\eps$-neighbourhood of the corresponding coordinate of  the equilibrium \bl  $(n_{1a}^*,n_{1i}^*,n_3^*)$. 

Recalling the initial condition $\mathbf M^*_K$ from~\eqref{M*def}, \color{black} the stopping time $T_\beta$  from Equation \bl \eqref{Tbetadef} (for $\beta>0$), the time $T_0^2$ of extinction of type 2 (i.e.\ of  sub-types \bl 2a, 2d, and 2i altogether), and the time $T_\eps^2$ when type 2 reaches a total population size of $\lfloor \eps K \rfloor$ (cf.\  Equation \bl \eqref{Tepsdef}), the first step of the sketch of the proof of the theorem (as presented in Section~\ref{sec-proofsketch}) is  verified by \bl the following lemma.

\begin{lemma}\label{lemma-2residentsstay}
Under the assumptions of Theorem~\ref{theorem-2invasion}, there exist $\eps_0>0$ and $b>0$ such that for all $\eps \in (0,\eps_0)$ we have
\[ \limsup_{K\to\infty} \P\big(R_{b\eps} \leq T_\eps^{2} \wedge T_0^2 \mid \mathbf N^K(0)=\mathbf M^*_K\big) = 0. \]
\end{lemma}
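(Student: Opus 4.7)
The plan is to use a standard Freidlin--Wentzell / perturbation argument exploiting the local asymptotic stability of $(n_{1a}^*,n_{1i}^*,n_3^*)$ as an equilibrium of the three-dimensional sub-system~\eqref{3dimvirus}. The crucial observation is that before time $T_\eps^2 \wedge T_0^2$ the rescaled total type 2 population $N_2^K(t)$ is bounded by $\eps$, so the effective drift of $(N_{1a}^K,N_{1i}^K,N_3^K)$ differs from the right-hand side of~\eqref{3dimvirus} by an $O(\eps)$ error uniformly in the region of interest: the competition pressure $C N_2^K$ on $N_{1a}^K$ contributes at most $C\eps\, n_{1a}^* + o(1)$, while the additional virus-contact term $(1-q)D N_3^K N_{2a}^K$ and the extra virion-production term $mv N_{2i}^K$ are both bounded by constants times $\eps$.

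First I would invoke asymptotic stability to build a quadratic Lyapunov function $V$ centred at $(n_{1a}^*,n_{1i}^*,n_3^*)$ satisfying $\dot V \leq -\alpha V$ along trajectories of~\eqref{3dimvirus} in a fixed neighbourhood of the equilibrium; any $O(\eps)$ perturbation of the drift then produces $\dot V \leq -\alpha V + C'\eps$ in that neighbourhood, forcing perturbed trajectories to remain within distance $O(\eps)$ of $(n_{1a}^*,n_{1i}^*,n_3^*)$ for all positive times once they have entered it. This yields a constant $b_0>0$ and an $\eps_0>0$ such that any deterministic trajectory of the perturbed three-dimensional system started at $(M_{1a,K}^*,M_{1i,K}^*,M_{3,K}^*)$ remains in the $b_0\eps$-ball around $(n_{1a}^*,n_{1i}^*,n_3^*)$ for all $\eps < \eps_0$ and all $K$ large enough (so that the initial condition is already within $\eps$ of the equilibrium in probability).

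Second, I would couple $(N_{1a}^K(t),N_{1i}^K(t),N_3^K(t))_{t\geq 0}$ to the solution of such a perturbed ODE via the usual martingale decomposition through compensated Poisson random measures, and apply exponential concentration inequalities (see e.g.~\cite[Ch.~11]{EK}, or the argument in the proof of \cite[Lemma 3.2]{C+19}) slice by slice on unit-length time intervals. Gronwall's inequality propagates any stochastic $\eps$-deviation without blow-up since the drift is Lipschitz on the bounded region under consideration. Iterating over $O(\log K)$ successive unit intervals together with a union bound yields that, with probability $1 - o(1)$, the stochastic and deterministic trajectories stay within distance $\eps$ of each other throughout $[0, R_{b\eps} \wedge T_\eps^2 \wedge T_0^2]$, where $b := b_0 + 1$. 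On this event $(N_{1a}^K,N_{1i}^K,N_3^K)$ never leaves the $b\eps$-neighbourhood, so $R_{b\eps}$ cannot precede $T_\eps^2 \wedge T_0^2$, which is the claim of the lemma.

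The main technical obstacle will be precisely this uniform control on the logarithmically growing random time window $T_\eps^2 \wedge T_0^2$: in the supercritical case the invader reaches size $\lfloor \eps K \rfloor$ only after a time of order $(\log K)/\lambda^*$, whereas the classical functional law of large numbers gives convergence only on fixed compact intervals. The slicing argument works because the per-slice deviation probability can be made of order $e^{-cK^\gamma}$ for some $c,\gamma>0$, so the union bound over $O(\log K)$ slices still tends to zero; the subcritical case is considerably easier since $T_0^2$ is then at most of order $1$. This is exactly the approach of \cite[Lemma 3.2]{C+19}, which I expect to transfer to the present six-dimensional setting with only cosmetic modifications.
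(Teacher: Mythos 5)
There is a genuine gap, and it sits exactly where you flag the ``main technical obstacle'': the control of the random time horizon $T_\eps^2\wedge T_0^2$. You propose to slice $[0,T_\eps^2\wedge T_0^2]$ into $O(\log K)$ unit intervals and union-bound, justifying the $O(\log K)$ length by saying that the invader reaches size $\lfloor\eps K\rfloor$ after time $\sim(\log K)/\lambda^*$ or dies out in time $O(1)$. But those time estimates are themselves consequences of the branching-process sandwich (Proposition~\ref{prop-2firstphase}), whose validity requires precisely the conclusion of the present lemma (the residents must stay near $(n_{1a}^*,n_{1i}^*,n_3^*)$ for the sandwich to hold). So the argument as written is circular: you cannot assume a logarithmic bound on $T_\eps^2\wedge T_0^2$ before knowing the residents behave. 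Your own remark that the per-slice error is $e^{-cK^\gamma}$ actually lets you union-bound over exponentially many slices, which is morally the Freidlin--Wentzell exit-time estimate the paper uses; but even then you are left with the event $\{T_\eps^2\wedge T_0^2\wedge R_{b\eps}>\e^{KV}\}$, and nothing in your proposal controls its probability.

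The paper closes this hole with two ingredients you are missing. First, it does not compare to a perturbed ODE directly but sandwiches $(N_{1a}^K,N_{1i}^K,N_3^K)$ between two \emph{autonomous} Markov chains $Y^{\eps,\pm}$ (with artificial transitions absorbing the worst-case influence of the mutants, valid up to $T_\eps^2\wedge T_0^2$), and applies Freidlin--Wentzell theory to these to get $\P(R^{\pm}_{b\eps}>\e^{KV})\to1$. Second --- and this is the step entirely absent from your proposal --- it shows via Dynkin's formula, applied to a linear functional $g=\gamma_{2a}n_{2a}+\gamma_{2d}n_{2d}+\gamma_{2i}n_{2i}$ with coefficients chosen so that $\Lcal g\geq N_2^K$ on the relevant region, that $\E[R_{b\eps}\wedge T_0^2\wedge T_\eps^2]\leq C\eps K$; Markov's inequality then makes $\P(R_{b\eps}\wedge T_0^2\wedge T_\eps^2\geq\e^{KV})$ exponentially small. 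Without an analogue of this expectation bound (or some other non-circular a priori bound on the stopping time), your slicing argument cannot be completed. The Lyapunov-function treatment of the perturbed deterministic flow in your first step is fine and plays the same role as the paper's analysis of the $\eps$-perturbed system~\eqref{eps3dimvirus}, but it does not substitute for the missing time-horizon control.
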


%{\gb \tt Definition of $M_K^*$?\bl}

The proof of this lemma, which involves  standard \bl Freidlin--Wentzell type large-deviation arguments for stochastic processes exiting \bl from a domain, originates from~\cite{C06,C+19}. Now we provide the proof.

\begin{proof}[Proof of Lemma~\ref{lemma-2residentsstay}]
The structure of the proof of Lemma~\ref{lemma-2residentsstay} is entirely analogous to the one of the proof of \cite[Lemma 3.2]{C+19}. However, the construction of the coupling of the rescaled resident population size with a simpler process and the proof of certain claims within the proof of the lemma have to be adapted to our model.

We verify the lemma via coupling the rescaled population size $(N_{1a}^K(t),N_{1i}^K(t),N_3^K(t))$ coordinatewise with two three-type continuous-time Markov chains, $(Y^{\eps,-}_{1a}(t),Y^{\eps,-}_{1i}(t),Y^{\eps,-}_3(t))$ and $(Y^{\eps,+}_{1a}(t),Y^{\eps,+}_{1i}(t),Y^{\eps,+}_3(t))$, on time scales where the mutant population is still small compared to $K$. More precisely, the coupling relation will be
\[ Y^{\eps,-}_\ups(t) \leq N_{\ups}^K(t)\leq Y^{\eps,+}_\ups(t), \qquad \text{a.s.} \qquad \forall t \leq T_0^2 \wedge T_{\eps}^2, \quad \forall \ups \in \{ 1a, 1i, 3 \}, \numberthis\label{firstresidentcoupling1} \]
with initial condition $(Y^{\eps,i}_{1a}(t),Y^{\eps,i}_{1i}(t),Y^{\eps,i}_3(t))=(N_{1a}^K(0),N_{1i}^K(0),N_{3}^K(0))=(M^*_{1a,K},M^*_{1i,K},M^*_{3,K})$ for $i \in \{+,-\}$. 

The latter processes will also depend on $K$, but we omit the notation $K$ from their nomenclature for simplicity. In the original proof in~\cite{C+19}, the processes involved in the coupling are birth-and-death processes; this will not be the case in our situation anymore because the death of infected individuals by lysis gives rise to $m$ virus particles. However, the jump sizes in each coordinate will still be bounded between $-1$ and $m$. In order to satisfy \eqref{firstresidentcoupling1}, these processes can be chosen with the following birth and death rates: 
\begin{align}
\hspace{-5pt} (Y^{\eps,-}_{1a}(t),Y^{\eps,-}_{1i}(t),Y^{\eps,-}_3(t)) \colon ~ & \Big( \frac{i}{K}, \frac{j}{K}, \frac{k}{K} \Big) \to \Big( \frac{i+1}{K},  \frac{j}{K}, \frac{k}{K} \Big) & \text{at rate } \lambda_1 i, \label{first1} \\
                 & \Big( \frac{i}{K}, \frac{j}{K}, \frac{k}{K} \Big) \to \Big( \frac{i-1}{K},  \frac{j+1}{K}, \frac{k-1}{K} \Big) & \text{at rate } D \frac{ik}{K} , \\
                 & \Big( \frac{i}{K}, \frac{j}{K}, \frac{k}{K} \Big) \to \Big( \frac{i+1}{K},  \frac{j-1}{K}, \frac{k}{K} \Big) & \text{at rate } rj, \\
                 & \Big( \frac{i}{K}, \frac{j}{K}, \frac{k}{K} \Big) \to \Big( \frac{i}{K},  \frac{j-1}{K}, \frac{k+m}{K} \Big) & \text{at rate } vj, \\
                 & \Big( \frac{i}{K}, \frac{j}{K}, \frac{k}{K} \Big) \to \Big( \frac{i}{K},  \frac{j}{K}, \frac{k-1}{K} \Big) & \text{at rate } \mu_3 k, \label{fifth1} \\
                & \Big( \frac{i}{K}, \frac{j}{K}, \frac{k}{K} \Big) \to \Big( \frac{i-1}{K},  \frac{j}{K}, \frac{k}{K} \Big) & \text{at rate }  \big( \mu_1+C \frac{i}{K} + C \eps \big)i, \label{sixth1} \\
                & \Big( \frac{i}{K}, \frac{j}{K}, \frac{k}{K} \Big) \to \Big( \frac{i-1}{K} \vee 0,  \frac{j}{K}, \frac{k-1}{K} \Big) & \text{at rate } (1-q)D\eps i, \label{seventh1} \\
                & \Big( \frac{i}{K}, \frac{j}{K}, \frac{k}{K} \Big) \to \Big( \frac{i-m}{K} \vee 0,  \frac{j}{K}, \frac{k}{K} \Big) & \text{at rate } v\eps. \label{eighth1}
\end{align}
and
\begin{align*}
(Y^{\eps,+}_{1a}(t),Y^{\eps,+}_{1i}(t),Y^{\eps,+}_3(t)) \colon ~ & \Big( \frac{i}{K}, \frac{j}{K}, \frac{k}{K} \Big) \to \Big( \frac{i+1}{K},  \frac{j}{K}, \frac{k}{K} \Big) & \text{at rate } \lambda_1 i,  \\
                 & \Big( \frac{i}{K}, \frac{j}{K}, \frac{k}{K} \Big) \to \Big( \frac{i-1}{K},  \frac{j+1}{K}, \frac{k-1}{K} \Big) & \text{at rate } D \frac{ik}{K} , \\
                 & \Big( \frac{i}{K}, \frac{j}{K}, \frac{k}{K} \Big) \to \Big( \frac{i+1}{K},  \frac{j-1}{K}, \frac{k}{K} \Big) & \text{at rate } rj, \\
                 & \Big( \frac{i}{K}, \frac{j}{K}, \frac{k}{K} \Big) \to \Big( \frac{i}{K},  \frac{j-1}{K}, \frac{k+m}{K} \Big) & \text{at rate } vj, \\
                 & \Big( \frac{i}{K}, \frac{j}{K}, \frac{k}{K} \Big) \to \Big( \frac{i}{K},  \frac{j}{K}, \frac{k-1}{K} \Big) & \text{at rate } \mu_3 k,  \\
                & \Big( \frac{i}{K}, \frac{j}{K}, \frac{k}{K} \Big) \to \Big( \frac{i-1}{K},  \frac{j}{K}, \frac{k}{K} \Big) & \text{at rate }  \big( \mu_1+C \frac{i}{K} \big)i, \numberthis\label{sixth2} \\
                & \Big( \frac{i}{K}, \frac{j}{K}, \frac{k}{K} \Big) \to \Big( \frac{i}{K},  \frac{j+1}{K}, \frac{k}{K} \Big) & \text{at rate } (1-q)D \eps i, \numberthis\label{seventh2} \\
                & \Big( \frac{i+1}{K}, \frac{j}{K}, \frac{k}{K} \Big) \to \Big( \frac{i+m}{K},  \frac{j}{K}, \frac{k+m}{K} \Big) & \text{at rate } v\eps. \numberthis\label{eighth2}
\end{align*}
Let us explain why these processes satisfy~\eqref{firstresidentcoupling1}. It is clear that the transition rates written in \eqref{first1}--\eqref{fifth1} (as well as between~\eqref{eighth1} and~\eqref{sixth2}) do not involve the parameters $\eps$ and are equal to the corresponding rates of $\mathbf N^K(t)$ projected to the type 1a, 1i, and 3 coordinates. The transitions in~\eqref{sixth1} and \eqref{sixth2} only involve type 1a among the resident types. Here, for $Y_{1a}^{\eps,+}(t)$ we only take death by competition within type 1a into account, ignoring the death of type 1a by competition with type 2a individuals, thus making the type 1a population larger, whereas for $Y_{1a}^{\eps,-}(t)$ we add an additional killing term due to competition with type 2a at rate $C \eps i \geq C N_{2a}^K(t) i$, thus making the type 1a population smaller. 

The remaining transitions are somewhat more involved. \eqref{seventh1} and \eqref{seventh2} account for successful virus attacks against type 2a, while \eqref{eighth1} and \eqref{eighth2} for death of type 2i individuals by lysis. However, in all the four cases, transitions happen at $\eps$-dependent rates and they are artificial in the sense that they do not correspond to any transition type of $\mathbf N^K$. In~\eqref{seventh1} and~\eqref{seventh2}, in the case of $(Y^{\eps,-}_{1a}(t),Y^{\eps,-}_{1i}(t),Y^{\eps,-}_{3}(t))$ we remove a virus particle due to a successful virus attack against a type 2a individual, but since this ``surviving'' virus particle could potentially infect another type 1a individual, we artificially also remove a type 1a individual in order to make sure that $Y_{1a}^{\eps,-}(t) \leq N_{1a}^K(t)$ also holds for all $t$ under consideration.  (The case when there is no type 1a individual to remove will not be relevant in the end since we are considering the case where $Y_{1a}^{\eps,-}(t) \approx n_{1a}^*$ with high probability.) The factor of $i$ in the rate of such transitions is $(1-q)D\eps$, which is an upper bound for $(1-q)D N_{2a}^K(t)$ on the time interval under consideration. Similarly, in the case of $(Y^{\eps,+}_{1a}(t),Y^{\eps,+}_{1i}(t),Y^{\eps,+}_{3}(t))$, at the same rate we do not remove the virus particle, but we artificially add a type 1a individual that this non-removed virus could potentially infect in addition to the infections of type 1a individuals in $\mathbf N^K$, in order to ensure that $N_{1a}^K(t) \leq Y_{1a}^{\eps,+}(t)$ holds for all relevant values of $t$. Moreover, regarding~\eqref{eighth1} and \eqref{eighth2}, for $(Y^{\eps,-}_{1a}(t),Y^{\eps,-}_{1i}(t),Y^{\eps,-}_{3}(t))$ we introduce artificial transitions where type 3 individuals are not created by death by lysis events of type 2i individuals but we remove $m$ type 1a individuals, which is the number of type 1a individuals that $m$ new viruses could otherwise potentially infect in the worst case. (The case when there are less than $m$ type 1a individuals to remove will again not be relevant.) The rate $v\eps$ here is an upper bound on $v N_{2i}^K(t)$. For $(Y^{\eps,+}_{1a}(t),Y^{\eps,+}_{1i}(t),Y^{\eps,+}_{3}(t))$, the death by lysis events of type 2i individuals are replaced by transitions where $m$ new viruses are created but also $m$ type 1a individuals are added, so that the creation of the additional viruses cannot reduce the type 1a population in the end.

Let us estimate the time until which the processes $(Y^{\eps,-}_{1a}(t),Y^{\eps,-}_{1i}(t),Y^{\eps,-}_{3}(t))$ and $(Y^{\eps,+}_{1a}(t),Y^{\eps,+}_{1i}(t),Y^{\eps,+}_{3}(t))$ stay close to the value $(n_{1a}^*,n_{1i}^*,n_3^*)$. We define the stopping times
\[ R^i_{\eps}:=\inf \big\{ t \geq 0 \colon (Y_{1a}^i(t),Y_{1i}^i(t),Y_3^i(t)) \notin [n_{1a}^*-\eps, n_{1a}^*+\eps] \times [n_{1i}^*-\eps, n_{1i}^*+\eps] \times [n_{3}^*-\eps, n_{3}^*+\eps]\big\} \]
for $i \in \{ +, - \}$ and $\eps>0$. 

For large $K$, according to \cite[Theorem 2.1, p.~456]{EK}, the dynamics of $(Y^{\eps,-}_{1a}(t),Y^{\eps,-}_{1i}(t),Y^{\eps,-}_{3}(t))$ is close to the one of the unique solution $(n_{1a},n_{1i},n_3)=((n_{1a}(t),n_{1i}(t),n_{3}(t))_{t\geq 0}$ to
\[ \begin{aligned}
\dot n_{1a} & = n_{1a}(\lambda_1-\mu_1- C (n_{1a}+\eps)-Dn_3 - (1-q)D \eps) -mv\eps+ r n_{1i}, \\
\dot n_{1i} & = Dn_{1a}n_3 -(r+v)n_{1i}, \\
\dot n_3 = & -Dn_{1a}n_3 -(1-q)D n_{3} \eps + mv n_{1i}-\mu_3n_3
\end{aligned} \numberthis\label{eps3dimvirus}  \]
with initial condition $(n_{1a}^*,n_{1i}^*,n_{3}^*)$ (since this quantity equals $\lim_{K\to\infty} (M_{1a,K}^*,M_{1i,K}^*,M_{3,K}^*)$). 

The equilibria of this system of ODEs are $(0,0,0)$, $(\bar n_{1a}^{(\eps)},0,0)$, and $(n_{1a}^{*,(\eps)},n_{1i}^{*,(\eps)},n_{3}^{*,(\eps)})$, where $\lim_{\eps\downarrow 0} \bar n_{1a}^{(\eps)} = \bar n_{1a}$ and $\lim_{\eps\downarrow 0} (n_{1a}^{*,(\eps)},n_{1i}^{*,(\eps)},n_{3}^{*,(\eps)})=(n_{1a}^*,n_{1i}^*,n_3^*)$. In particular, for all $\eps>0$ small enough, $\bar n_{1a}^{(\eps)},n_{1a}^{*,(\eps)},n_{1i}^{*,(\eps)},n_{3}^{*,(\eps)}$ are all positive. Moreover, since we have assumed that $(n_{1a}^*,n_{1i}^*,n_3^*)$ is a coordinatewise positive and locally asymptotically stable equilibrium of~\eqref{3dimvirus}, which implies that $(\bar n_{1a},0,0)$ is unstable (cf.\ Section~\ref{sec-3dimvirus}), for $\eps>0$ sufficiently small the equilibrium $(\bar n_{1a}^{(\eps)},0,0)$ is unstable whereas \color{black} $(n_{1a}^{*,(\eps)},n_{1i}^{*,(\eps)},n_{3}^{*,(\eps)})$ is locally asymptotically stable. These also imply that there exists $b>0$ and $\eps_0 >0$ such that for all $0 <\eps \leq \eps_0$,
\[ \forall \ups \in \{1a,1i,3\} \colon \big|n_{\ups}^*-n_{\ups}^{*,(\eps)}\big| \leq b\eps \quad \text{ and } \quad 0 \notin [ n_{\ups}^*-b \eps,  n_{\ups}^*+b \eps]; \quad \text{ and } \quad \bar n_{1a}^{(\eps)} \notin [n_{1a}^*-b \eps, n_{1a}^*+b \eps].\]
%This in particular holds for all $\eps \in (0,\eps_0)$ in case $\eps_0 > 0$ is such that 
%\[  |\bar n_1^{(\eps)}-\bar n_1| \leq \eps \qquad \text{and} \qquad 0 \notin [\bar n_1-2\eps,\bar n_1+2\eps ]. \]
%Hence, for such $\eps$, any solution with positive initial condition converges to the stable equilibrium as $t \to \infty$. 
Now, using a result about exit of jump processes from a domain by Freidlin and Wentzell \cite[Chapter 5]{FW84}, %analogously to \cite[Section 3.1.2]{C+19}, 
there exists a family (over $K$) of Markov jump processes 
$(\widehat Y^{\eps,-}_{1a}(t),\widehat Y^{\eps,-}_{1i}(t),\widehat Y^{\eps,-}_{3}(t))$
whose transition rates are positive, bounded, Lipschitz continuous, and uniformly bounded away from 0 such that for $i \in \{ +, - \}$ and
\[ \widehat R^i_{\eps}:=\inf \big\{ t \geq 0 \colon (\widehat Y_{1a}^{\eps,i}(t),\widehat Y_{1i}^{\eps,i}(t),\widehat Y_3^{\eps,i}(t)) \notin [n_{1a}^*-\eps, n_{1a}^*+\eps] \times [n_{1i}^*-\eps, n_{1i}^*+\eps] \times [n_{3}^*-\eps, n_{3}^*+\eps]\big\} \]
there exists $V>0$ such that
\[ \P(R^{-}_{b\eps}>\e^{KV}) = \P(\widehat R^{-}_{b\eps}>\e^{KV}) \underset{K \to \infty}{\longrightarrow} 1. \numberthis\label{FW1multitypevirus} \]
Using similar arguments for $(\widehat Y^{\eps,+}_{1a}(t),\widehat Y^{\eps,+}_{1i}(t),\widehat Y^{\eps,+}_{3}(t))$, we derive that for $\eps>0$, $V>0$ small enough, we have that
\[ \P(R^{+}_{b\eps}>\e^{KV})\underset{K \to \infty}{\longrightarrow} 1. \numberthis\label{FW2multitypevirus} \]
%\color{red} As far as I see, with $Y_{1,t}^2$ things are even easier because the corresponding ODE has stable equilibrium equal to $\bar n_1$. \color{black} 
Now, on the event $\{ R_{b\eps} \leq T_0^2 \wedge T_{\eps}^2 \}$ we have $R_{b\eps} \geq R_{b\eps}^{-} \wedge R_{b\eps}^{+}$. Using \eqref{FW1multitypevirus} and \eqref{FW2multitypevirus}, we derive that
\[ \limsup_{K \to \infty} \P \big( R_{b\eps} \leq \e^{KV}, R_{b\eps} \leq T_0^2 \wedge T_{\eps}^2 \big) = 0. \]
Furthermore, by Markov's inequality,
\begin{align*}
    \P(R_{b\eps} \leq T_0^2 \wedge T_{\eps}^2) & \leq \P \big( R_{b\eps} \leq \e^{KV}, R_{b\eps} \leq T_0^2 \wedge T_{\eps}^2 \big) + \P(R_{b\eps} \wedge T_0^2 \wedge T_{\eps}^2 \geq \e^{KV} \big) \\ &  \leq \P \big( R_{b\eps} \leq \e^{KV}, R_{b\eps} \leq T_0^2 \wedge T_{\eps}^2 \big) + \e^{-KV} \E(R_{b\eps} \wedge T_0^2 \wedge T_{\eps}^2 ). 
\end{align*}
Since we have
\[ \E \big[R_{b\eps} \wedge T_0^2 \wedge T_{\eps}^2 \big] \leq \E \Big[\int_0^{R_{b\eps} \wedge T_0^2 \wedge T_{\eps}^2} K N_{2}^K(t) \d t \Big], \]
recalling that $N_{1}(t)=N_{1a}(t)+N_{1i}(t),N_{2}(t)=N_{2a}(t)+N_{2d}(t)+N_{2i}(t)$ and writing $N_1^K(t)=\frac1K N_1(t), N_2^K(t)=\frac1K N_2(t)$, it suffices to show that there exists $C>0$ such that 
\[ \E \Big[\int_0^{R_{b\eps} \wedge T_0^2 \wedge T_{\eps}^2} N_{2}(t)\d t \Big] \leq C \eps K. \numberthis\label{expectedstoppingtimemultihostvirus} \]
This can be done analogously to \cite[Section 3.1.2]{C+19}. %\color{red} , here we temporarily leave the proof of \cite[Lemma 3.2]{C+19} and do some parts of the proof of \cite[Lemma 3.1]{C+19}, as easy as possible  \color{black}.
Indeed, let $\Lcal$ denote the infinitesimal generator of $(\mathbf N^K(t))_{t \geq 0}$. We want to show that there exists a function $g \colon (\smfrac{1}{K} \N_0)^6 \to \R$ defined as
\[ g(n_{1a},n_{1i},n_{2a},n_{2d},n_{2i},n_3)=\gamma_{2a} n_{2a} + \gamma_{2d} n_{2d}+\gamma_{2i} n_{2i} \numberthis\label{gammasvirus} \]
such that
\[ \Lcal g(\mathbf N_{t}^K) \geq N_{2,t}^K. \numberthis\label{largegeneratormultitypevirus}\]
If \eqref{largegeneratormultitypevirus} holds, then \eqref{expectedstoppingtimemultihostvirus} follows because using Dynkin's formula,
\[ 
\begin{aligned}
\E \Big[\int_0^{R_{b\eps} \wedge T_0^2 \wedge T_{\eps}^2} K N_{2,t}^K \d t \Big] & \leq  \E \Big[\int_0^{R_{b\eps} \wedge T_0^2 \wedge T_{\eps}^2} K\Lcal g(\mathbf N_{t}^K)\d t \Big] =\E\big[ K g(\mathbf N_{R_{b\eps} \wedge T_0^2 \wedge T_{\eps}^2}^K)-Kg(\mathbf N_{0}^K) \big] \\
& \leq (\gamma_{2a} \vee \gamma_{2d} \vee \gamma_{2i}) \eps K - ( \gamma_{2a} \wedge \gamma_{2d} \wedge \gamma_{2i} ),
\end{aligned}
\]
which implies \eqref{expectedstoppingtimemultihostvirus}, independently of the signs of $\gamma_{2a},\gamma_{2d}$, and $\gamma_{2i}$. Let us apply the infinitesimal generator $\mathcal L$ to the function $g$ defined in \eqref{gammasvirus}. We obtain 
\[ \begin{aligned} \Lcal g(\mathbf N_t^K) &= N_{2a}(t) \big[ (\lambda_2-\mu_1-C (N_{1}^K(t)+N_{2}^K(t))-D N_3^K(t))\gamma_{2a} + q D N_{3}^K(t) \gamma_{2d} + (1-q) D N_{3}^K(t) \gamma_{2i} \big] \\ & \qquad + N_{2d}(t) \big[ \sigma  \gamma_{2a} -(\kappa\mu_1+\sigma)  \gamma_{2d} \big] + N_{2i}(t) \big[ r \gamma_{2a} - (r+v) \gamma_{2i} \big]. \end{aligned} \] 
Hence, according to \eqref{largegeneratormultitypevirus}, it sufficies to show that there exist $\gamma_{2a},\gamma_{2d},\gamma_{2i} \in \R$ such that the following system of inequalities is satisfied:
\begin{align}
    (\lambda_2-\mu_1-C (N_{1}^K(t)+N_{2}^K(t))-D N_{3}^K(t))\gamma_{2a} +q D N_{3}^K(t)\gamma_{2d} +(1-q)D N_{3}^K(t)\gamma_{2i}  & >1, \label{first-gammasvirus} \\
     \sigma \gamma_{2a} -(\kappa\mu_1+\sigma)\gamma_{2d} & >1, \label{second-gammasvirus} \\
     r\gamma_{2a}-(r+v)\gamma_{2i} & >1. \label{third-gammasvirus}
\end{align}
Since $N_{1}^K(t)+N_{2}^K(t)$ and $N_{3}^K(t)$ vary in $t$, the system \eqref{first-gammasvirus}--\eqref{third-gammasvirus} of inequalities is not easy to handle. However, for $t \in [0,R_{b\eps} \wedge T_0^2 \wedge T_{\eps}^2]$, we have $C (N_{1}^K(t)+N_{2}^K(t)) +DN_{3}^K(t) \leq C (n_{1a}^*+n_{1i}^*+b\eps+\eps)+D(n_3^*+b\eps)$ and $D N_{3}^K(t) \geq D (n_3^*-b\eps)$. Hence, 
\[ \begin{aligned} (\lambda_2 &-\mu_1-C (N_{1,t}^K+N_{2}^K(t))-qD N_3^K(t) (N_{1,t}^K+N_{2a}^K(t))) \gamma_{2a} + q D N_3^K(t) \gamma_{2d} + (1-q)D N_3^K(t)\gamma_{2i}  \\ & \geq (\lambda_2-\mu_1-C(n_{1a}^*+n_{1i}^*+b\eps+\eps)-D(n_3^*+b\eps))\gamma_{2a}+q D(n_3^*-b\eps) \gamma_{2d} +(1-q) D(n_3^*-b\eps) \gamma_{2i}, \end{aligned} \]
which implies that \eqref{first-gammasvirus} is satisfied as soon as 
\[  (\lambda_2-\mu_1-C(n_{1a}^*+n_{1i}^*+b\eps+\eps)-D(n_3^*+b\eps))\gamma_{2a}+q D(n_3^*-b\eps) \gamma_{2d} +(1-q) D(n_3^*-b\eps) \gamma_{2i}>1. \numberthis\label{firststronger-gammasvirus} \] 
%which, according to the definition of $\bar n_1$, can also written as
%\[ (\lambda_2-\lambda_1-2\eps-\eps)\gamma_{2a} + p(\lambda_1-\mu-2\eps) \gamma_{2d} >1.  \]
Let us verify the existence of $\gamma_{2a},\gamma_{2d},\gamma_i$ satisfying \eqref{firststronger-gammasvirus}, \eqref{second-gammasvirus}, and \eqref{third-gammasvirus}. First of all, we can rewrite \eqref{second-gammasvirus} as
\[ \gamma_{2d} < \frac{\sigma \gamma_{2a}-1}{\kappa\mu_1+\sigma} \numberthis\label{gamma2drewrittenmultitypevirus} \]
and
\[ \gamma_{2i} < \frac{r \gamma_{2a}-1}{r+v}. \numberthis\label{gamma2irewritten} \]

Hence, let us first consider the inequality
\[  (\lambda_2-\mu_1-C(n_{1a}^*+n_{1i}^*+b\eps+\eps)-D(n_3^*+b\eps))\gamma_{2a}+q D(n_3^*-b\eps) \frac{\sigma \gamma_{2a}-1}{\kappa\mu_1+\sigma}  +(1-q) D(n_3^*-b\eps) \frac{r \gamma_{2a}-1}{r+v}  >1. \numberthis\label{withgamma2boundvirus} \]
The \twoIone-invasion condition \eqref{2invades1} is satisfied by assumption, and hence there exists $\eps>0$ such that
\[  (\lambda_2-\mu_1-C(n_{1a}^*+n_{1i}^*+b\eps+\eps)-D(n_3^*+b\eps))+q D(n_3^*-b\eps) \frac{\sigma}{\kappa\mu_1+\sigma}  +(1-q) D(n_3^*-b\eps) \frac{r}{r+v}  >0. \numberthis\label{epscontvirus} \]
Indeed, this is true for the following reason. For $\eps=0$ we have
\begin{align*}
  & (\lambda_2-\mu_1-C(n_{1a}^*+n_{1i}^*)-Dn_3^*+q Dn_3^* \frac{\sigma}{\kappa\mu_1+\sigma}  +(1-q) D n_3^* \frac{r}{r+v} \\ &= (\lambda_2-\lambda_1) + \lambda_1-\mu_1 - C(n_{1a}^*+n_{1i}^*)-Dn_3^*+Dn_3^* \frac{r}{r+v} \\ & \qquad - D n_3^* \frac{r}{r+v} + q D n_3^* \frac{\sigma}{\kappa\mu_1+\sigma} +(1-q)Dn_3^* \frac{r}{r+v}  \\
  & = \lambda_2-\lambda_1 - D n_3^* \frac{r}{r+v} + q D n_3^* \frac{\sigma}{\kappa\mu_1+\sigma} +(1-q)Dn_3^* \frac{r}{r+v} \numberthis\label{big0} \\
  & = \lambda_2-\lambda_1 + qDn_3^* \frac{v\sigma-r\kappa\mu_1}{(r+v)(\kappa\mu_1+\sigma)} > 0 \numberthis\label{invposs}
\end{align*}
where~\eqref{big0} is true thanks to~\eqref{1star} and the strict inequality in~\eqref{invposs} thanks to~\eqref{2invades1}. By continuity, there exists $\eps>0$ such that~\eqref{epscontvirus} holds.
Hence,
\[  (\lambda_2-\mu_1-C(n_{1a}^*+n_{1i}^*+b\eps+\eps)-D(n_3^*+b\eps))\gamma_{2a}+q D(n_3^*-b\eps) \frac{\sigma \gamma_{2a}}{\kappa\mu_1+\sigma}  +(1-q) D(n_3^*-b\eps) \frac{r \gamma_{2a}}{r+v}  \]
tends to infinity as $\gamma_{2a} \to \infty$, and therefore for all sufficiently large $\gamma_{2a}$ the inequalilty~\eqref{withgamma2boundvirus} is satisfied. By continuity of the function $x \mapsto D(n_3^*-b\eps)x$, this implies that for any $\gamma_{2a}$ satisfying \eqref{withgamma2boundvirus} there exists $\gamma_{2d}$ satisfying \eqref{gamma2drewrittenmultitypevirus} and $\gamma_{2i}$ satisfying \eqref{gamma2irewritten} such that \eqref{firststronger-gammasvirus} holds. We conclude the lemma.
\color{black}
\end{proof}

In view of Lemma~\ref{lemma-2residentsstay}, we can proceed with {\bf Step ii)}, which will be  covered \bl by the \bl following proposition.

\begin{prop}
\label{prop-2firstphase}
Under the assumptions of Theorem~\ref{theorem-2invasion}, there exists a constant $b>0$ and a function $f^* \colon (0,\infty) \to (0,\infty)$ such that $\lim_{\eps\downarrow 0} f^*(\eps)=0$ and
\[ 
\limsup_{K\to\infty} \Bigg| \P \Bigg( T_{\eps}^2 < T_0^2 \wedge R_{b\eps}, \Big| \frac{T_{\eps}^2}{\log K} - \frac{1}{\lambda^*} \Big| \leq f^*(\eps) \Bigg|\ \mathbf N^K(0) =\mathbf M^*_K  \Bigg) - (1-s_{2a})\Bigg| = o_\eps(1)  \numberthis\label{firstphaseinvade}
\]
and
\[  
\limsup_{K\to\infty} \Big| \P \Big( T_0^2 < T_\eps^2 \wedge R_{b\eps}  \Big|\ \mathbf N^K(0) = \mathbf M^*_K  \Big) - s_{2a} \Big| = o_\eps(1), \numberthis\label{firstphasedie} 
\]
where $o_\eps(1)$ tends to 0 as $\eps \downarrow 0$.
\end{prop}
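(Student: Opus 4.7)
The strategy is to couple the invading type 2 subpopulation $(N_{2a}, N_{2d}, N_{2i})$ between two three-type continuous-time branching processes $Z^{\eps,-}$ and $Z^{\eps,+}$ whose transition rates are obtained from those of the \twoIone-branching process of Section~\ref{sec-2invasion} by substituting $n_{1a}^* \pm b\eps$, $n_{1i}^* \pm b\eps$, $n_3^* \pm b\eps$ in place of $(n_{1a}^*,n_{1i}^*,n_3^*)$ (with the sign chosen so as to decrease/increase death-by-competition and virus-contact rates, and decrease/increase the number of new virions felt by type 2). By Lemma~\ref{lemma-2residentsstay}, up to the stopping time $R_{b\eps} \wedge T_0^2 \wedge T_\eps^2$ the resident triple $(N_{1a}^K,N_{1i}^K,N_3^K)$ is sandwiched in $[n_{\upsilon}^*-b\eps,n_\upsilon^*+b\eps]$ for $\upsilon\in\{1a,1i,3\}$, so Poisson-thinning yields a coupling such that
\[
Z^{\eps,-}_\upsilon(t) \le N_\upsilon(t) \le Z^{\eps,+}_\upsilon(t), \qquad \forall\, t \le R_{b\eps} \wedge T_0^2 \wedge T_\eps^2, \quad \upsilon \in \{2a,2d,2i\},
\]
started from $Z^{\eps,\pm}(0)=(1,0,0)$.

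Next I would analyse these branching processes via standard multi-type theory (e.g.~\cite{AN72}). Their mean matrices $J^{\eps,\pm}$ differ from $J^*$ by $O(\eps)$ perturbations in entries; by continuity of eigenvalues and of the extinction probability system (which is the obvious $\eps$-perturbation of~\eqref{s2as2ds2i}), the Perron eigenvalues $\lambda^{\eps,\pm}$ and extinction probabilities $s^{\eps,\pm}_{2a}$ satisfy $\lambda^{\eps,\pm} \to \lambda^*$ and $s^{\eps,\pm}_{2a}\to s_{2a}$ as $\eps \downarrow 0$. Since the matrices remain irreducible for small $\eps$, in the supercritical case $s^{\eps,\pm}_{2a}\in(0,1)$ and the Kesten--Stigum theorem gives $\e^{-\lambda^{\eps,\pm}t} Z^{\eps,\pm}(t)\to W^{\eps,\pm}v^{\eps,\pm}$ a.s., with $\{W^{\eps,\pm}>0\}$ the survival event up to a null set; in the subcritical case the processes die out almost surely and in expectation at rate $\lambda^{\eps,\pm}<0$.

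From these limit theorems I obtain the hitting-time asymptotics: on $\{W^{\eps,\pm}>0\}$, the first time $T^{\eps,\pm}_\eps$ at which $Z^{\eps,\pm}_{2a}+Z^{\eps,\pm}_{2d}+Z^{\eps,\pm}_{2i}$ reaches $\lfloor \eps K\rfloor$ satisfies $T^{\eps,\pm}_\eps/\log K \to 1/\lambda^{\eps,\pm}$ in probability, while on the extinction event the process dies out in $O(1)$ time. Combining this with the sandwich and Lemma~\ref{lemma-2residentsstay}, for each small $\eps$ one has
\[
\P(T^{\eps,+}_\eps < \infty) \le \P(T_\eps^2 < T_0^2\wedge R_{b\eps} \mid \mathbf N^K(0)=\mathbf M_K^*) + o_K(1) \le \P(T^{\eps,-}_\eps<\infty) + o_K(1),
\]
and similarly the conditional time $T_\eps^2/\log K$ is squeezed between $1/\lambda^{\eps,+}$ and $1/\lambda^{\eps,-}$ up to $o_K(1)$. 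Letting $K\to\infty$ and then $\eps\downarrow 0$ yields~\eqref{firstphaseinvade}; the extinction case yielding~\eqref{firstphasedie} follows analogously using the subcritical-branching-process bound and that $Z^{\eps,\pm}$ dies out in $o_K(\log K)$ time on $\{W^{\eps,\pm}=0\}$, providing the function $f^*(\eps):=|1/\lambda^{\eps,+}-1/\lambda^{\eps,-}|+g(\eps)$ for an appropriate $g(\eps)\downarrow 0$.

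The main technical obstacle is to make the first step truly rigorous: although the sandwich at the level of (single-type) birth--death processes is standard, the multi-type and virus-driven interactions (virus contacts reduce the virion population and contribute $m$ virions upon lysis, affecting several coordinates at once) require a careful construction of the auxiliary processes together with the residents so that the coupling is monotone in all three coordinates of $(N_{2a},N_{2d},N_{2i})$ simultaneously. A possible route is to fix independent Poisson point processes driving each transition type and to reassign ``excess'' mass to phantom transitions (along the lines of~\cite{C+19} and~\cite{BT21}), for which the existence of $\eps_0>0$ and $b>0$ in the sandwiching must be verified by the same kind of uniform bound on equilibrium-level rates that was used in Lemma~\ref{lemma-2residentsstay}.
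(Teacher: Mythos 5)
Your proposal follows essentially the same route as the paper's proof: a coordinatewise sandwich of $(N_{2a},N_{2d},N_{2i})$ between two $\eps$-perturbed three-type branching processes on the event controlled by Lemma~\ref{lemma-2residentsstay}, followed by continuity of the extinction probabilities and Perron eigenvalues in $\eps$ and the standard supercritical growth/hitting-time asymptotics from~\cite{AN72}. The one step you flag as the ``main technical obstacle'' --- constructing the enveloping processes so that the coupling is simultaneously monotone in all three coordinates despite virus contacts and lysis moving mass between coordinates --- is precisely the part the paper executes in detail, via artificial auxiliary transitions that either kill an active individual without creating a dormant/infected one (lower envelope) or create a dormant/infected individual without removing the active one (upper envelope), so your proposed fix via phantom transitions à la~\cite{C+19} is exactly the right one.
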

The proof of Proposition~\ref{prop-2firstphase} is essentially analogous to the one of~\cite[Proposition 4.1]{BT19}, but the precise coupling arguments require some care. We present the proof now.

\begin{proof}[Proof of Proposition~\ref{prop-2firstphase}]

 In what follows, we consider our population process on the event
\[ A_\eps := \{ T_0^2 \wedge T_{\eps}^2 < R_{b\eps} \} \]
for sufficiently small $\eps>0$. On this event, the invasion or extinction of the mutant (i.e.\ type 2) population will happen before the rescaled resident (i.e.\ type 1 plus 3) population substantially deviates from its equilibrium size $(n_{1a}^*,n_{1i}^*,n_3^*)$. We now show how the \twoIone-branching process approximates $(N_{2a}(t),N_{2d}(t),N_{2i}(t))$ for $t \in [0, R_{b\eps} \wedge T_{\eps}^{2} \wedge T_0^{2}]$  via an explicit sandwich coupling between two enveloping processes.  To this end, one constructs \bl $(\mathbf N(t))_{t\geq 0}$  and the \twoIone-branching process $(Z_{2a}(t),Z_{2d}(t),Z_{2i}(t))_{t\geq 0}$ (whose transition rates were defined in Section~\ref{sec-2invasion}) from an identical set of driving Poisson processes (see e.g.\ \cite[Appendix B]{BT21} for details in a similar setup). Further, \bl using the same Poisson processes, one constructs the enveloping processes $(Z_{2a}^{\eps,-}(t),Z_{2d}^{\eps,-}(t),Z_{2i}^{\eps,-}(t))_{t\geq 0}$  and $(Z_{2a}^{\eps,+}(t),Z_{2d}^{\eps,+}(t),Z_{2i}^{\eps,+}(t))_{t\geq 0}$  (which again depend on $K$, but we omit that from the notation for readability) such that for all $\upsilon \in \{ 2a,2d,2i\}$ we have the ``sandwich couplings'' 
\[ 
\begin{aligned}
Z_{\upsilon}^{\eps,-}(t) \leq Z_{\upsilon}(t) \leq Z_{\upsilon}^{\eps,+}(t) \quad \text{ and } \quad 
Z_{\upsilon}^{\eps,-}(t) \leq N_{\upsilon}(t) \leq Z_{\upsilon}^{\eps,+}(t), 
%\quad \text{ a.s.\ } \forall t \in [0, R_{b\eps} \wedge T_{\eps}^{2} \wedge T_0^{2}].
\end{aligned} 
\numberthis\label{couplingmultitypevirus} 
\]
almost surely for all $t \in [0, R_{b\eps} \wedge T_{\eps}^{2} \wedge T_0^{2}]$ on the event $A_\eps$.

Concretely, this can be achieved as follows. \bl For $(k,l,m) \in \N_0^3$, the rates of  the dominated process $(Z_{2a}^{\eps,-}(t),Z_{2d}^{\eps,-}(t),Z_{2i}^{\eps,-}(t))_{t\geq 0}$ are given as \begin{small}
\[ 
(k,l,n) \to \begin{cases}
     (k+1,l,n) & \text{ at rate $k\lambda_2$ (birth of type 2a individuals)}, \\
    (k-1,l,n) & \text{ at rate $k(\mu_1 + C (n_{1a}^*+n_{1i}^*+2b\eps+\eps))$ (death of type 2a individuals)}, \\
    (k-1,l,n+1) & \text{ at rate $(1-q) D k (n_3^*-b\eps)$ (virus contact of type 2 leading to infection)}, \\
   (k-1,l+1,n) & \text{ at rate $q D k (n_3^*-b\eps)$ (virus contact of type 2 leading to dormancy)}, \\
   (k-1,l,n)&  \text{ at rate $ D k \cdot 2 b\eps$ \emph{(artificial auxiliary transition  a), \bl see below)}}, \\
   (k+1,l-1,n)& \text{ at rate $\sigma l$ (resuscitation of type 2d individuals)}, \\
   (k,l-1,n)& \text{ at rate $\kappa\mu_1 l$ (death of type 2d individuals)}, \\
   (k+1,l,n-1) &\text{ at rate $r n$ (recovery of type 2i individuals)}, \\
    (k,l,n-1) &\text{ at rate $vn$ (death of type 2i individuals via lysis)},
\end{cases}
\]
\end{small}
%(artificial auxiliary transitions that are similar to the successful or unsuccessful virus contacts of type 2, but now only the affected active individual dies and no infected or dormant individual is created -- and also no virus particle `dies'),
while the rates of  the dominating process \bl $(Z_{2a}^{\eps,+}(t),Z_{2d}^{\eps,+}(t),Z_{2i}^{\eps,+}(t))_{t\geq 0}$ are defined as
\begin{small}
\[ 
(k,l,n) \to \begin{cases}
     (k+1,l,n) & \text{ at rate $k\lambda_2$ (birth of type 2a individuals)}, \\
    (k-1,l,n) & \text{ at rate $k(\mu_1 + C (n_{1a}^*+n_{1i}^*-2b\eps))$ (death of type 2a individuals)}, \\
    (k-1,l,n+1) & \text{ at rate $(1-q) D k (n_3^*-b\eps)$ (virus contact of type 2 leading to infection)}, \\
   (k-1,l+1,n) & \text{ at rate $q D k (n_3^*-b\eps)$ (virus contact of type 2 leading to dormancy)}, \\
   (k,l,n+1)&  \text{ at rate $ (1-q)D k \cdot 2 b\eps$ \emph{(artificial auxiliary transition  b), \bl see below)}}, \\
   (k,l+1,n)&  \text{ at rate $ qD k \cdot 2 b\eps$ \emph{(artificial auxiliary transition  c), \bl see below)}}, \\
   (k+1,l-1,n)& \text{ at rate $\sigma l$ (resuscitation of type 2d individuals)}, \\
   (k,l-1,n)& \text{ at rate $\kappa\mu_1 l$ (death of type 2d individuals)}, \\
   (k+1,l,n-1) &\text{ at rate $r n$ (recovery of type 2i individuals)}, \\
    (k,l,n-1) &\text{ at rate $vn$ (death of type 2i individuals via lysis)}.
\end{cases}
\]
%2: (artificial auxiliary transitions that are similar to the successful virus contacts of type 2, but now only an infected individual is born, while the affected active individual does not get removed -- and also no virus particle `dies'),
%3: (artificial auxiliary transitions that are similar to the unsuccessful virus contacts of type 2, but now only a dormant individual is born, while the affected active individual does not get removed),
\end{small}
The auxiliary transition  a) corresponds to the successful or unsuccessful virus contacts of type 2, but now only the affected active individual dies and no infected or dormant individual is created (and also no virus particle gets removed, since type 3 is not included in the branching processes under consideration).  Further,  auxiliary transition  b) is similar to the successful virus contacts of type 2, but now only an infected individual is born, while the affected active individual does not get removed, 
%(and also no virus particle gets removed {\tt  Nein, das infizierte Individuum bleibt doch erhalten !? \color{red} Ich meinte, kein Virus ist verloren durch den Virusanschlag (weil die Viren nicht im Verzweigungsprozess sind)
%. Vielleicht kann das weg, wenn es stört. Ein infiziertes Individuum ist geboren, das habe ich schon erwähnt. \bl})
and the same holds for the auxiliary transition  c) with the infected individual being replaced by a dormant individual.

The rationale behind the choice of rates that \bl guarantees~\eqref{couplingmultitypevirus} for all sufficiently large $K$ and small enough $\eps,b>0$ is the following. Let $t \in [0, R_{b\eps} \wedge T_\eps^2 \wedge T_0^2]$, then we have $N_{1a}^K(t) \in [n_{1a}^*-b\eps, n_{1a}^*+b\eps]$, $N_{1i}^K(t) \in [n_{1i}^*-b\eps,n_{1i}^*+b\eps]$, $N_{3}^K(t) \in [n_3^*-b\eps, n_3^*+b\eps]$, and $N_{2a}^K(t)+N_{2i}^K(t)+N_{2d}^K(t) \in [0,\eps]$. Now,  the \bl rates of $(\mathbf N(t))_{t\geq 0}$ that are linear and affect only types 2a, 2d, 2i and 3  remain \bl unchanged in $(Z_{2a}^{\eps,-}(t),Z_{2d}^{\eps,\pm}(t),Z_{2i}^{\eps,\pm}(t))_{t\geq 0}$. All the other rates of $(Z_{2a}^{\eps,-}(t),Z_{2d}^{\eps,-}(t),Z_{2i}^{\eps,-}(t))_{t\geq 0}$ resp.\ $(Z_{2a}^{\eps,+}(t),Z_{2d}^{\eps,+}(t),Z_{2i}^{\eps,+}(t))_{t\geq 0}$ are chosen in such a way that the sizes of the type 2a, 2d, and 2i sub-populations  are always smaller / larger than those for the coupled processes \bl under these bounds on the type 1a, 1i, 3, and 2a+2d+2i population sizes. 

Indeed, the rate of death by competition experienced by a type 2a individual,  given by  \bl $C\frac{N_{1}(t)+N_{2}(t)}{K}$, is  dominated \bl from above by $C(n_{1a}^*+n_{1i}^*+2b\eps + \eps)$.  Regarding \bl the virus contacts, here the situation is slightly more complicated because these remove but also create individuals of the types that we  aim \bl to bound from below by the branching process. { Here, we observe that \bl} the smallest per capita rate of loss of type 2a individuals due to successful virus contacts is $(1-q) D (n_3^*-b\eps)$ and the one due to unsuccessful ones is $q D(n_3^*-b\eps)$,  and with these rates we carry out the corresponding classical transitions. To accommodate further events, we incorporate an additional rate of successful virus contacts of $(1-q)D \cdot 2 b \eps$ and an additional rate of unsuccessful virus contacts of $qD \cdot 2 b \eps$ per type 2a individual, but for these events we only allow the death of the involved active individual without creating new a dormant or infected individual, thus again \bl guaranteeing smaller population size of types 2a, 2d, and 2i for the  dominated \bl branching process than in the original individual-based model. This ensures that the first inequality in the second chain of inequalities in~\eqref{couplingmultitypevirus} holds (given a properly chosen Poissonian construction,  in the almost sure sense\bl). Comparing the obtained rates with  those \bl of $(Z_{2a}(t),Z_{2d}(t),Z_{2i}(t))$ defined in Section~\ref{sec-2invasion}, we conclude that the first inequality of the first chain of inequalities in~\eqref{couplingmultitypevirus} is also satisfied.

Similarly, for $(Z_{2a}^{\eps,+}(t),Z_{2d}^{\eps,+}(t),Z_{2i}^{\eps,+}(t))_{t\geq 0}$, the per capita rate of successful resp.\ unsuccessful virus contacts  are reduced to \bl  $(1-q) D (n_3^*-b\eps)$ and $q D(n_3^*-b\eps)$, but at the additional rate of $(1-q)D \cdot 2 b \eps$ resp.\ $qD \cdot 2 b \eps$ per active individual,  an infected resp.\ dormant individual is created. \bl The per capita death-by-competition rate of type 2a individuals is reduced to $C(n_{1a}^*+n_{1i}^*-2b\eps)$ for this process (in particular, the competitive pressure felt by type 2a from any form of type 2  individual \bl is entirely ignored). This yields the second inequality in both chains of inequalities of~\eqref{couplingmultitypevirus}.

The rest of the proof of Proposition~\ref{prop-2firstphase} is now entirely analogous to the corresponding part of the proof of~\cite[Proposition 4.1]{BT19}, which in turn originates from the proof of~\cite[Proposition 3.1]{C+19}. 

For $\diamond \in \{ +, - \}$, let $s_{2a}^{(\eps,\diamond)}$ denote the extinction probability of the process $(Z_{2a,t}^{\eps,\diamond}, Z_{2d,t}^{\eps,\diamond},Z_{2i,t}^{\eps,\diamond})$ started from $(1,0,0)$. The extinction probability of a supercritical branching process is continuous with respect to all kinds of transitions that the mutant population in our model has. Using arguments analogous to the ones in \cite[Appendix A.3]{C+19} and the first chain of inequalities in \eqref{couplingmultitypevirus}, we have
$s_{2a}^{(\eps,+)} \leq s_{2a} \leq s_{2a}^{(\eps,-)}$ for fixed $\eps>0$ and
\[ 0 \leq \liminf_{\eps \downarrow 0} \big| s_{2a}^{(\eps,\diamond)}-s_{2a} \big| \leq \limsup_{\eps \downarrow 0} \big| s_{2a}^{(\eps,\diamond)}-s_{2a} \big| \leq \limsup_{\eps \downarrow 0} \big| s_{2a}^{(\eps,-)}-s_{2a}^{(\eps,+)} \big| = 0, \qquad  \forall \diamond \in \{ +, - \}, \numberthis\label{qineqmvirus} \]
where we recall the extinction probability $s_{2a}$ defined in \eqref{s2as2ds2i}. 

Next, we prove that the probabilities of extinction and invasion of the actual process $(N_{2a}(t),N_{2d}(t),N_{2i}(t))$ also tend to $s_{2a}$ and $1-s_{2a}$, respectively, with high probability as $K \to \infty$. We define the stopping times
\[ T_x^{(\eps,\diamond),2} := \inf \{ t >0 \colon Z^{\eps,\diamond}_{2a}(t) +Z^{\eps,\diamond}_{2d}(t)+Z^{\eps,\diamond}_{2i}(t)= \lfloor K x \rfloor \}, \qquad \diamond \in \{ +, - \}, x \in \R. \]
Thanks to the coupling in the second chain of inequalities in \eqref{couplingmultitypevirus}, which is valid on $A_\eps$, we have
\[ \P\big(T_{\eps}^{(\eps,-),2} \leq T_0^{(\eps,-),2}, A_\eps\big) \leq \P\big(T_{\eps}^{2} \leq T_0^{2},A_\eps \big) \leq  \P\big(T_{\eps}^{(\eps,+),2} \leq T_0^{(\eps,+),2}, A_\eps\big) \numberthis\label{couplednonextinctionmultitypevirus} \] 
Indeed, if a process reaches the size $K\eps$ before dying out, then the same holds for a larger process. However, the event $A_\eps$ is independent of $(Z_{2a,t}^{\eps,\diamond}, Z_{2d,t}^{\eps,\diamond},Z_{2i,t}^{\eps,\diamond})$ for both $\diamond = +$ and $\diamond = -$, and hence 
\[ \liminf_{K \to \infty} \P\big( T_{\eps}^{(\eps,-),2}\leq T_0^{(\eps,-),2}, A_\eps \big)=\liminf_{K \to \infty}   \P(A_\eps)\P\big( T_{\eps}^{(\eps,-),2} \leq T_0^{(\eps,-),2})  \geq (1-s_{2a}^{(\eps,-)})(1-o_\eps(1)) \numberthis\label{eps-LBmultitypevirus} \]
and
\[ \limsup_{K \to \infty}  \P\big( T_{\eps}^{(\eps,+),2}\leq T_0^{(\eps,+),2}, A_\eps \big) =\limsup_{K \to \infty}   \P(A_\eps) \P\big( T_{\eps}^{(\eps,+),2} \leq T_0^{(\eps,+),2})  \leq (1-s_{2a}^{(\eps,+)})(1+o_\eps(1)). \numberthis\label{eps+UBmultitypevirus} \]
Letting $K \to \infty$ in \eqref{couplednonextinctionmultitypevirus} and applying \eqref{eps-LBmultitypevirus} and \eqref{eps+UBmultitypevirus} yields that %any limit point $p_0^{\eps}$ of $\P(T_{\eps}^{2} \leq T_0^{2},A_\eps )$ satisfies
%\[ (1-s_{2a}^{(\eps,-)})(1-o_\eps(1)) \leq p_0^{\eps} \leq (1-s_{2a}^{(\eps,+)})(1+o_\eps(1)). \]
%Hence,
\[ \limsup_{K \to \infty} \big| \P(T_{\eps}^{2} \leq T_0^{2},A_\eps )-(1-s_{2a}) \big|=o_\eps(1), \]
as wanted. Equation \eqref{firstphasedie} can be derived similarly. 

It remains to show that in the case of invasion, 
%(which happens with probability tending to $1-s_{2a}$) 
the time before the total type 2 population size reaches $K \eps$ is of order $\log K/\lambda^*$, where $\lambda^*$ was defined as the largest eigenvalue of the matrix $J^*$ defined in \eqref{J*def}, which is positive under our assumptions. 

Let $\lambda^{*,(\eps,\diamond)}$, $\diamond \in \{ +, - \}$, denote the maximal eigenvalue of the mean matrix of the process $(Z_{2a,t}^{\eps,\diamond}, Z_{2d,t}^{\eps,\diamond},Z_{2i,t}^{\eps,\diamond})$. This eigenvalue is positive for all small enough $\eps>0$ and converges to $\lambda^*$ as $\eps \downarrow 0$. Hence, there exists a function $f \colon (0,\infty) \to (0,\infty)$ with $\lim_{\eps \downarrow 0} f(\eps)=0$ such that for all $\eps>0$ sufficiently small,
\[ \Big| \frac{\lambda^{*,(\eps,\diamond)}}{\lambda^*}-1\Big| \leq \frac{f(\eps)}{2}. \numberthis\label{eigenvaluesclosemultitypevirus}\]
Let us fix $\eps$ small enough such that \eqref{eigenvaluesclosemultitypevirus} holds. Then from the second chain of inequalities in \eqref{couplingmultitypevirus} we deduce that
\[ \P \Big( T^{(\eps,-),2}_{\eps} \leq T^{(\eps,-),2}_{0} \wedge \frac{\log K}{\lambda^*} (1+f(\eps)),A_\eps \Big) \leq \P \Big( T^{2}_{\eps} \leq T^{2}_{0} \wedge \frac{\log K}{\lambda^*} (1+f(\eps)),A_\eps \Big). \]
Using this together with the independence between $A_\eps$ and $(Z_{2a,t}^{\eps,\diamond}, Z_{2d,t}^{\eps,\diamond},Z_{2i,t}^{\eps,\diamond})$ and employing \cite[Section 7.5]{AN72}, we obtain for $\eps>0$ small enough (in particular such that $f(\eps)<1$)
\[
\begin{aligned}
& \liminf_{K \to \infty} \P \Big( T^{(\eps,-),2}_{\eps} \leq T^{(\eps,-),2}_{0} \wedge \frac{\log K}{\lambda^*} (1+f(\eps)),A_\eps \Big) 
%\\
%\geq & \liminf_{K \to \infty} \P \Big( T^{(\eps,-),2}_{\eps} \leq \frac{\log K}{\lambda^*} (1+f(\eps)) \Big) \P(A_\eps) \\
%\geq & \liminf_{K \to \infty} \P \Big( T^{(\eps,-),2}_{\eps} \leq \frac{\log K}{\lambda^*^{(\eps,-)}} \big( 1-\frac{f(\eps)}{2} \big) (1+f(\eps))  \Big) \P(A_\eps) \\
%\geq &  \liminf_{K \to \infty} \P \Big( T^{(\eps,-),2}_{\eps} \leq \frac{\log K}{\lambda^*^{(\eps,-)}} \Big) \P(A_\eps) \\
%\geq & 
\geq (1-s_{2a}^{(\eps,-)})(1-o_\eps(1)). 
\end{aligned} 
\]
This inequality follows from computations that are analogous to \cite[Section 3.1.3, first display below (3.41)]{C+19}.  %\color{red} Again, the homogamy paper is estimating $\limsup$s from below and $\liminf$s from above, which in my opinion is senseless. And I don't see why the (homogamy analogue of the) first inequality should be an equality; for us, an inequality is certainly sufficient at the moment. \color{black} \\
%\color{red} For the last inequality in \eqref{festimates} we need a proper reference from \cite{AN72}. The homogamy paper says ``using classical results on bi-type branching processes \cite{AN72}" and derives an analogous inequality without further comments. I haven't yet been able to figure out where this precisely is in the book. \color{black} 
Similarly, using the second chain of inequalities in \eqref{couplingmultitypevirus}, we conclude that for all sufficiently small $\eps>0$ 
%\begin{align*}
    %\P \Big( T^{(\eps,+),2}_{\eps} \leq T^{(\eps,+),2}_{0} \wedge \frac{\log K}{\lambda^*} (1+f(\eps)),A_\eps \Big) \leq \P \Big( T^{2}_{\eps} \leq T^{2}_{0} \wedge \frac{\log K}{\lambda^*} (1+f(\eps)),A_\eps \Big),
%\end{align*}
%and arguments similar to the ones used in \eqref{festimates} imply that
\[ \liminf_{K \to \infty} \P \Big( T^{(\eps,+),2}_{\eps} \leq T^{(\eps,+),2}_{0}, T^{(\eps,+),2}_{\eps}  \geq \frac{\log K}{\lambda^*} (1-f(\eps)),A_\eps \Big) \geq (1-s_{2a}^{(\eps,+)})(1-o_\eps(1)). \]
These together imply \eqref{firstphaseinvade}, hence the proof of the proposition is finished.
%
%\medskip
%
%\medskip
%
%Now, \bl since we assume that the $\twoIone$-branching process is non-critical, the same holds for the other two branching processes when $\eps$ is small and $K$ is large, and their survival probabilities and growth rates will be close to the ones of the $\twoIone$-branching process.
%%%in the case when the $\twoIone$-branching process is supercritical, while these branching processes will also be subcritical when the $\twoIone$-branching process is subcritical. 
%These  processes together with Lemma~\ref{lemma-2residentsstay} can then be used for completing the proof of Proposition~\ref{prop-2firstphase}.
%\medskip
%
%In the subcritical case, when the \twoNIone\ non invasion condition\bl~\eqref{2doesn'tinvade1} holds, we have $s_{2a}=1$, and therefore it is straightforward to derive the corresponding assertions of Theorem~\ref{theorem-2invasion} from Proposition~\ref{prop-2firstphase} analogously to \cite[Section 4.4]{BT21}\color{black}.
%For the rest of the proof of the theorem, we will therefore assume that $s_{2a}<1$, i.e.\  the invasion condition \twoIone \ from Equation \bl \eqref{2invades1} holds,  so that \bl the branching process $(Z_{2a}(t),Z_{2d}(t),Z_{2i}(t))_{t\geq 0}$ is supercritical. 
\end{proof}

We now derive the following result, which constitutes {\bf Step iii)}.% \bl and together with Proposition~\ref{prop-2firstphase} completes the proof of Theorem~\ref{theorem-2invasion}.

\begin{prop}\label{prop-2dynsyst}
Under the same assumptions as in Proposition~\ref{prop-2firstphase} and the additional assumption that the \twoIone\ invasion condition\bl~\eqref{2invades1} holds, there exists a constant $\beta>0$ (independent of $\eps$) \bl such that  for any function $f^*$ for which the assertion of Proposition~\ref{prop-2firstphase} holds, \color{black}
\[ 
\begin{aligned}
& \liminf_{K \to \infty} \P \big( T_\eps^2 < T_\beta < T_\eps^2 + f^*(\eps) \log K \big| T^2_{\eps} < T_0^2 \wedge R_{b\eps} \big) \geq 1-o_\eps(1).
\end{aligned}
\]
\end{prop}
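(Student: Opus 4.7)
The plan is to use the classical functional law of large numbers to replace $\mathbf N^K$ after $T_\eps^2$ by the deterministic flow $\Phi_t$ of~\eqref{6dimvirus} on a fixed time horizon, and then exploit the local saddle structure of $\Phi_t$ at $\mathbf n^* := (n_{1a}^*, n_{1i}^*, 0, 0, 0, n_3^*)$ to show that this flow drives \emph{all} six coordinates above a fixed threshold $2\beta > 0$ in a time of order $\log(1/\eps)$, uniformly in the initial data.

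First, on the event $\{T_\eps^2 < T_0^2 \wedge R_{b\eps}\}$ the state $\mathbf N^K(T_\eps^2)$ lies in the compact set $\Kcal_\eps$ of vectors $(n_{1a}, n_{1i}, n_{2a}, n_{2d}, n_{2i}, n_3) \in [0, \infty)^6$ satisfying $|n_\ups - n_\ups^*| \leq b\eps$ for $\ups \in \{1a, 1i, 3\}$ and $n_{2a} + n_{2d} + n_{2i} = \lfloor \eps K \rfloor / K$. Applying the LLN \cite[Thm.~11.2.1]{EK} on any fixed horizon $T$ then gives
\[ \sup_{t \in [0, T]} \|\mathbf N^K(T_\eps^2 + t) - \Phi_t(\mathbf N^K(T_\eps^2))\|_\infty \to 0 \quad \text{in probability as } K \to \infty. \]

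Second, I analyze $\Phi_t$ starting from $\Kcal_\eps$. The Jacobian of~\eqref{6dimvirus} at $\mathbf n^*$ is block-triangular: its $(1a, 1i, 3)$-block equals the Jacobian of~\eqref{3dimvirus} at $(n_{1a}^*, n_{1i}^*, n_3^*)$ and thus has three eigenvalues with strictly negative real parts by assumption, while its $(2a, 2d, 2i)$-block is the irreducible Metzler matrix $A$ from~\eqref{Adef}. Under \twoIone~\eqref{2invades1}, the arguments of Section~\ref{sec-fixationproof} combined with Perron--Frobenius yield that $A$ has a simple positive Perron eigenvalue $\lambda^* > 0$ with coordinatewise strictly positive left and right eigenvectors $\mathbf u = (u_a, u_d, u_i)$ and $\mathbf v = (v_a, v_d, v_i)$, while the remaining two eigenvalues of $A$ have real parts strictly less than $\lambda^*$. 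Hence $\mathbf n^*$ is a hyperbolic saddle with a one-dimensional unstable manifold tangent to $(\mathbf 0, \mathbf v)$. Writing $\mathbf z(t) := (n_{2a}(t), n_{2d}(t), n_{2i}(t))$, the linear functional $c(\mathbf z) := u_a z_a + u_d z_d + u_i z_i$ satisfies $c(\mathbf z(0)) \geq u_{\min}\,\eps$ for every initial state in $\Kcal_\eps$, with $u_{\min} := \min(u_a, u_d, u_i) > 0$. A standard invariant-manifold decomposition yields local coordinates $(\xi, \eta) \in \R \times \R^5$ near $\mathbf n^*$ in which $\dot\xi = \lambda^* \xi + O(|\xi|^2 + \|\eta\|^2)$ and $\dot\eta = J_s \eta + O(|\xi|^2 + \|\eta\|^2)$, where $J_s$ has only eigenvalues with strictly negative real parts. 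For $\mathbf n_0 \in \Kcal_\eps$ we have $\xi(0) \geq u_{\min}\eps/2$, so that as long as $\Phi_t(\mathbf n_0)$ stays in a neighbourhood $U$ of $\mathbf n^*$ of some radius $\rho > 0$ independent of $\eps$, a Gronwall estimate gives
\[ \xi(t) \geq (u_{\min}\eps/2)\, e^{(\lambda^* - \delta) t} \quad \text{and} \quad \|\eta(t)\| \leq C\rho^2 + e^{-\sigma_s t}\|\eta(0)\| \]
for some fixed $\delta \in (0, \lambda^*/2)$ and $\sigma_s > 0$. The exit time $T_*(\eps) := \inf\{t > 0 : \Phi_t(\mathbf n_0) \notin U\}$ is therefore bounded by $(\lambda^*-\delta)^{-1}\log(2\rho/(u_{\min}\eps))$ uniformly in $\mathbf n_0 \in \Kcal_\eps$; and at $t = T_*(\eps)$ the trajectory is aligned with $(\mathbf 0, \mathbf v)$ up to an $O(\rho^2)$ error, so the strict positivity of $\mathbf v$ yields $n_{2a}(T_*(\eps)) \wedge n_{2d}(T_*(\eps)) \wedge n_{2i}(T_*(\eps)) \geq \rho v_{\min}/2$ (with $v_{\min} := \min(v_a, v_d, v_i) > 0$), while the $(1a, 1i, 3)$-coordinates differ from $(n_{1a}^*, n_{1i}^*, n_3^*)$ by at most $O(\rho^2)$. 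Choosing $\beta > 0$ small enough that $2\beta \leq (\rho v_{\min}/2) \wedge (\min(n_{1a}^*, n_{1i}^*, n_3^*)/2)$, we conclude $\min_{j \in \Tcal} \Phi_{T_*(\eps)}(\mathbf n_0)_j \geq 2\beta$ for every $\mathbf n_0 \in \Kcal_\eps$.

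Combining the two steps, for any fixed $\eps > 0$, $T_*(\eps) = O(\log(1/\eps))$ is a finite constant while $f^*(\eps) \log K \to \infty$ as $K \to \infty$ (since $f^*(\eps) > 0$ is fixed); hence $T_*(\eps) \leq f^*(\eps) \log K$ for all $K$ sufficiently large, and applying the LLN at horizon $T = T_*(\eps)$ yields $\min_{j \in \Tcal} N_j^K(T_\eps^2 + T_*(\eps)) \geq \beta$ with probability tending to $1$. Therefore $T_\beta \leq T_\eps^2 + f^*(\eps) \log K$ with probability tending to $1$. The main technical obstacle lies in the deterministic step 2, and specifically in the claim that \emph{each} of the three type-2 sub-populations (rather than merely their sum or their Perron projection $c(\mathbf z)$) exceeds $2\beta$ upon exit from $U$: this is where the irreducibility of the matrix $A$ is essential, since it is precisely irreducibility that forces both $\mathbf u$ and $\mathbf v$ to be strictly coordinatewise positive and thereby forces $\mathbf z(t)$ to align with the strictly positive direction $\mathbf v$ as it grows along the unstable manifold, uniformly over $\Kcal_\eps$.
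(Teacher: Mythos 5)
Your overall strategy is the same as the paper's: apply the functional LLN after $T_\eps^2$ on a time horizon that is $\eps$-dependent but independent of $K$, and show that the deterministic flow, started from the compact set $\Kcal_\eps$, pushes all six coordinates above a fixed level in time $O(\log(1/\eps))$, driven by the Perron--Frobenius structure of the irreducible block $A$ (the paper works with the transposed matrix $J^*$ and the linear system \eqref{linearODE} plus a compactness argument rather than an invariant-manifold decomposition, but the inputs --- $\lambda^*>0$ under \eqref{2invades1}, strict positivity of the Perron eigenvectors, and irreducibility forcing \emph{all three} type-2 coordinates to become macroscopic --- are identical). Two points, however, need repair. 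First, a minor one: the unstable eigenvector of the full $6\times6$ Jacobian at $\mathbf n^*$ is \emph{not} $(\mathbf 0,\mathbf v)$. The Jacobian is block-triangular with the type-2 block $A$ having no influence \emph{from} the residents at $\mathbf n^*$, but the resident equations do depend on the type-2 coordinates (e.g.\ $\partial \dot n_{1a}/\partial n_{2a}=-Cn_{1a}^*\neq 0$), so the right eigenvector for $\lambda^*$ is $\bigl((\lambda^* I-B)^{-1}\wt C\,\mathbf v,\ \mathbf v\bigr)$ with a generically nonzero resident part. Consequently the resident coordinates at exit deviate from $(n_{1a}^*,n_{1i}^*,n_3^*)$ by $O(\rho)$, not $O(\rho^2)$; this is harmless for your choice of $\beta$, but the tangency claim as stated is false. (The \emph{left} eigenvector is indeed $(\mathbf 0,\mathbf u)$, so your functional $c(\mathbf z)$ is the correct unstable coordinate.)

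The substantive gap is the Gronwall step. From $\dot\xi=\lambda^*\xi+O(|\xi|^2+\|\eta\|^2)$ and $\|\eta(t)\|\le C\rho^2+e^{-\sigma_s t}\|\eta(0)\|$ one gets an error term in $\dot\xi$ of size $O(\rho^4)$, a constant \emph{independent of $\eps$}. Since $\xi(0)=\Theta(\eps)$, for every fixed $\rho$ and all $\eps\ll\rho^4$ this constant negative drift dominates $\lambda^*\xi$ at small times, and the claimed bound $\xi(t)\ge (u_{\min}\eps/2)e^{(\lambda^*-\delta)t}$ does not follow; making it follow would force $\rho$ (hence $\beta$) to depend on $\eps$, which defeats the purpose of the proposition. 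The statement you want is nevertheless true, for a structural reason you did not invoke: every quadratic term in the $\dot n_{2a},\dot n_{2d},\dot n_{2i}$ equations carries a factor of a type-2 coordinate (the quadratic remainders are $-Cn_{2a}(\,\cdot\,)$, $-Dn_{2a}\delta n_3$, $qDn_{2a}\delta n_3$, $(1-q)Dn_{2a}\delta n_3$), and since $\mathbf z\ge 0$ is preserved and $\mathbf u$ is strictly positive one has $\|\mathbf z\|_1\asymp \xi$. Hence the error in $\dot\xi$ is in fact $O(\rho\,\xi)$ inside $U$, giving $\dot\xi\ge(\lambda^*-C\rho)\xi$ and the desired uniform exponential growth once $\rho$ is fixed small. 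With this correction (and the corrected exit direction) your argument closes, and the remaining steps --- uniform exit time $T_*(\eps)=O(\log(1/\eps))$, alignment with $\mathbf v$ up to $O(\rho^2)+o_\eps(1)$ from the $\eta$-bound, and $T_*(\eps)\le f^*(\eps)\log K$ for large $K$ --- are sound and parallel the paper's use of $s_\eps$.
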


\begin{proof} 
First, it is clear that for any  $b>0$ and for all sufficiently small $\eps$, if $T_\eps^2 < T_0^2 \wedge R_{b\eps}$, then $T_\beta > T_\eps^2$. Hence, it suffices to show that 
\[ 
\begin{aligned}
& \liminf_{K \to \infty} \P \big( T_\beta < T_\eps^2 + f^*(\eps) \log K \big| T^2_{\eps} < T_0^2 \wedge R_{b\eps} \big) \geq 1-o_\eps(1),
\end{aligned}
\]
which we will do now. \color{black} Consider the system of linear differential equations
\[ (\dot n_{2a}(t), \dot n_{2d}(t), \dot n_{2i}(t)) = ( n_{2a}(t),  n_{2d}(t), n_{2i}(t)) J^* .\numberthis\label{linearODE} \]
Since $J^*$ is irreducible, by the Perron--Frobenius theorem it has a coordinatewise positive left eigenvector $(\pi_{2a},\pi_{2d},\pi_{2i})$ associated with the positive eigenvalue $\lambda^*$ satisfying $\pi_{2a}+\pi_{2d}+\pi_{2i}=1$, and there are no other coordinatewise non-negative left eigenvectors whose sum of entries equals 1. The general solution to the system~\eqref{linearODE} of equations satisfies
\[ 
\begin{aligned}
    n_{2a}(t) & =  c \color{black} \pi_{2a} \e^{\lambda^* t}(1+o(1))\color{black}, \\
    n_{2d}(t) & =   c \color{black}  \pi_{2d} \e^{\lambda^* t}(1+o(1))\color{black}, \\
    n_{2i}(t) & =   c \color{black}  \pi_{2i} \e^{\lambda^* t}(1+o(1))\color{black},
\end{aligned} \numberthis\label{J^*lin}
\]
as $t\to\infty$, for a suitably chosen $c>0$ depending only on the initial condition but not on $t$\color{black}. 

Indeed, as already discussed in Section~\ref{sec-2invasion}, due to the Perron--Frobenius theorem and the irreducibility of $J^*$, under the condition~\eqref{2invades1}, $\lambda^*$ is the unique eigenvalue of $J^*$ with the largest real part, and it is a single root of the characteristic equation (thus with a one-dimensional eigensubspace). Now, it is classical that if the other two eigenvalues $\lambda',\lambda''$ are distinct \bl real numbers with left eigenvector $(\varrho_{2a},\varrho_{2d},\varrho_{2i})$ associated with $\lambda'$ and left eigenvector $(\nu_{2a},\nu_{2d},\nu_{2i})$ associated with $\lambda''$, then the form of the general solution to~\eqref{linearODE} is
\[ (n_{2a}(t),n_{2d}(t),n_{2i}(t)) = c (\pi_{2a},\pi_{2d},\pi_{2i}) e^{\lambda^* t} + c' (\varrho_{2a},\varrho_{2d},\varrho_{2i}) \e^{\lambda' t} + c'' (\nu_{2a},\nu_{2d},\nu_{2i}) \e^{\lambda'' t} \numberthis\label{linindep} \]
for $c,c',c'' \in \mathbb R$,
which satisfies~\eqref{J^*lin}. If $\lambda'$ and $\lambda''$ are equal with geometric multiplicity 2, then the form of the solution is analogous to~\eqref{linindep} (with $\lambda'=\lambda''$). Next, if they are equal with geometric multiplicity 1, then denoting the unique left eigenvector of $J^*$ associated with $\lambda'$ by $(\varrho_{2a},\varrho_{2d},\varrho_{2i})$ and the corresponding generalized left eigenvector $(\nu_{2a},\nu_{2d},\nu_{2i})$ satisfying $(\nu_{2a},\nu_{2d},\nu_{2i})J^* = \lambda '(\nu_{2a},\nu_{2d},\nu_{2i}) + (\varrho_{2a},\varrho_{2d},\varrho_{2i})$,
then the form of the general solution is
\[ (n_{2a}(t),n_{2d}(t),n_{2i}(t))=c(\pi_{2a},\pi_{2d},\pi_{2i}) e^{\lambda^* t} + c' (\varrho_{2a},\varrho_{2d},\varrho_{2i})e^{\lambda' t} + c'' (t (\varrho_{2a},\varrho_{2d},\varrho_{2i})+(\nu_{2a},\nu_{2d},\nu_{2i})) e^{\lambda' t} \]
for $c,c',c'' \in \mathbb R$. Finally, if $\lambda'$ and $\lambda''$ are complex and conjugate, i.e.\ for some $\alpha',\beta' \in \R$ we have $\lambda' = \alpha'+i\beta'$ and $\lambda''=\alpha'-i\beta'$, with associated left eigenvectors $(u_1,u_2,u_3)=(a_1+i b_1, a_2 + i b_2 , a_3 + ib_3)$ resp.\ $(v_1,v_2,v_3)=(a_1-i b_1, a_2 - i b_2, a_3 - i b_3)$, then the form of the general solution is
\[ 
\begin{aligned}
    (n_{2a}(t),n_{2d}(t),n_{2i}(t)) = c(\pi_{2a},\pi_{2d},\pi_{2i}) e^{\lambda^* t} & + c' e^{\alpha' t} (\cos(\beta' t)(a_1,a_2,a_3) -\sin(\beta' t) (b_1,b_2,b_3)) \\ & \qquad + c'' e^{\alpha' t} (\sin(\beta' t)(a_1,a_2,a_3)+\cos(\beta' t) (b_1,b_2,b_3))
\end{aligned} \]
for $c,c',c'' \in \mathbb R$. In all cases, the
solution still satisfies~\eqref{J^*lin}. %{\tt \gb Can one explain the last two cases more explicitly, or give a reference?\bl}

%\color{red} Indeed, as already discussed in Section~\ref{sec-2invades1}, due to the Perron--Frobenius theorem, under the condition~\eqref{2invades1}, $\lambda^*$ is the unique eigenvalue of $J^*$ with the largest real part, and it is a single root of the characteristic equation (thus with a one-dimensional eigensubspace). Hence, the dominant term of the general solution to the homogeneous system~\eqref{linearODE} of linear ODEs is of the form $c \e^{\lambda^* t} \pi$ for some $c>0$ (depending on the initial condition), where $\pi=(\pi_{2a},\pi_{2d},\pi_{2i})$ is the coordinatewise positive left eigenvector of $J^*$ associated with the positive eigenvalue $\lambda^*$. The explicit form of the lower-order terms also depends on the algebraic and geometric multiplicity of the other two eigenvalues, but~\eqref{J^*lin} always holds.
%The precise form of the additional lower-order terms in this general solution also depends on the values of the other two eigenvalues of $J^*$; these might be equal to each other for certain choices of the parameters, and in principle they could even yield a $2\times 2$ Jordan block in the Jordan normal form of $J^*$. But even in such cases, the form of the general solution to~\eqref{linearODE} satisfies~\eqref{J^*lin}. \\ {\tt Vielleicht können wir das noch kürzen, ich wollte erstmal ein vollständiges Argument haben. Reicht vielleicht ``Since under the condition~\eqref{2invades1}, the positive real number $\lambda^*$ is the unique eigenvalue of $J^*$ with the largest real part, [the solution to this system of equations satisfies]...''?}
\color{black}
%where $(\pi_{2a},\pi_{2d},\pi_{2i})$ is the coordinatewise positive left eigenvector of the matrix $J^*$ associated with the unique positive eigenvalue $\lambda^*$. 

If we replace $n_{1a}(t)$ by $n_{1a}^*$, $n_{1i}(t)$ by $n_{1i}^*$, and $n_{3}(t)$ by $n_3^*$, each time they occur on the right-hand side of any equation of the six-dimensional system~\eqref{6dimvirus}, then the right-hand sides corresponding to $\dot n_{2a}(t), \dot n_{2d}(t), \dot n_{2i}(t)$ agree with \bl the right-hand sides of~\eqref{J^*lin}. Let us now fix $\eta \in (0,\lambda^*)$. By continuity, we can choose $A>0$ such that as long as $|n_{1a}(t)-n_{1a}^*|<A, |n_{1i}(t)-n_{1i}^*|<A$, and $|n_{3}(t)-n_{3}^*|<A$, $n_{2a}(t),n_{2d}(t),n_{2i}(t)$  then \bl all grow exponentially at rate at least $\eta$. Moreover, any solution to~\eqref{6dimvirus} with a coordinate-wise non-negative initial condition is clearly bounded. Thus, since the right-hand sides are locally Lipschitz, each coordinate of the solution can decay at most exponentially.

Let now $\eps>0$, $b>0$ corresponding to Proposition~\ref{prop-2firstphase}, $\gamma\in (0,\eps)$ arbitrary, and consider the following assumptions on the initial condition $(n_{1a}(0),\ldots,n_3(0))$ of the system~\eqref{6dimvirus}: 
\[ 
\begin{aligned}
|n_{\ups}(0)-n_{\ups}^*| & \leq b\eps, &\forall \ups \in \{1a, 1i, 3 \}, \\
\eps - \gamma \leq n_{2a}(0)+n_{2d}(0)+n_{2i}(0) & \leq \eps + \gamma. &
\end{aligned} \numberthis\label{Edeltaepsb}
\] 
It follows from~\eqref{J^*lin} and the irreducibility of $J^*$ that for  any \bl $\beta_1>0$ and $\gamma>0$ small enough, there exists a deterministic time \bl $s_\eps \in (0,\infty)$ depending on $\eps$ such that for any initial condition $(n_{1a}(0),\ldots,n_3(0))$, satisfying~\eqref{Edeltaepsb}, there exists $t \in (0,s_\eps)$ such that $n_{2a}(t),n_{2d}(t),n_{2i}(t) > \beta_1$. Here we also used that for such initial condition, $c$ from~\eqref{J^*lin} must be positive because it is easy to see that the non-negative orthant is positively invariant under the system~\eqref{linearODE}. Let now $\beta_2$ be the infimum of all the values of $\min \{ n_{1a}(t), n_{1i}(t), n_{3}(t) \}$ over all such $t$ and all such initial conditions $(n_{1a}(0),\ldots,n_{3}(0))$, then $\beta_2>0$. Now, let $\beta := \frac12 \min \{ \beta_1,\beta_2 \}$. %Then $n_{\ups}(T) \geq 2\beta$ for all $\ups \in \mathcal T$ uniformly over all such initial condition $(n_{1a}(0),\ldots,n_{3}(0))$.

Finally, when $\eps$ is small enough, $\mathbf N^K(T_\eps^2)$ satisfies~\eqref{Edeltaepsb} on the event $\{ T_{\eps}^2 < T_0^2 \wedge R_{b\eps} \}$, which has probability $1-s_{2a} \pm o_\eps(1)$ thanks to Proposition~\ref{prop-2firstphase}. Then, \bl using the strong Markov property and 
applying~\cite[Theorem 11.2.1, p456]{EK} on the time interval $[T_\eps^2,T_\eps^2+2s_\eps]$, it follows that with conditional probability $1-o_\eps(1)$ (on the same event $\{ T_\eps^2 < T_0^2 \wedge R_{b\eps} \}$, in the limit $K\to\infty$) \color{black} there exists $t \in [0,s_\eps]$ such that $\mathbf N^K(t)$ satisfies $N^K_{\ups}(T_\eps^2+t)>\beta$ for all $\ups \in \mathcal T$. Of course, $s_\eps$ may tend to infinity as $\eps \downarrow 0$, but  $s_\eps/\log K$ tends to zero as $K\to\infty$ for fixed small $\eps$. 
Thus, conditional on the event $\{ T_\eps^2 < T_0^2 \wedge R_{b\eps} \}$ as $K\to\infty$, $T_\beta/\log K$ stays in $[T_\eps^2/\log K,T_\eps^2/\log K+f^*(\eps)]$
with (conditional) probability $1-o_\eps(1)$. \color{black}
%Thus, thanks to Proposition~\ref{prop-2firstphase} which tells that $T_\eps^2/\log K$ \color{red} stays in $[1/\lambda^*-f^*(\eps),1/\lambda^*+f^*(\eps)]$ with probability $1-o_\eps(1)$ conditional on the event $\{ T_\eps^2 < T_0^2 \wedge R_{b\eps} \}$ as $K\to\infty$, $T_\beta/\log K$ stays in $[T_\eps^2/\log K,T_\eps^2/\log K+f^*(\eps)]$
%with probability $1-o_\eps(1)$ conditional on the same event. 
\color{black} Thus, the assertion of Proposition~\ref{prop-2dynsyst} follows.
\end{proof}

Now, based on Propositions~\ref{prop-2firstphase} and~\ref{prop-2dynsyst}, we can complete the proof of Theorem~\ref{theorem-2invasion} very similarly to~\cite[Section 4.4]{BT21}. The main difference between our proof here and the one in~\cite{BT21} is that we know less about the qualitative behaviour of the corresponding dynamical system (in our case~\eqref{6dimvirus}), which will be made up for by the additional Proposition~\ref{prop-2dynsyst}.

\begin{proof}[Proof of Theorem~\ref{theorem-2invasion}]

Our proof strongly relies on the coupling \eqref{couplingmultitypevirus}. To be more precise, we define a Bernoulli random variable $B$ as the indicator of non-extinction
\[ 
B:= \mathds 1_{\big\{ \forall t>0, \, Z_{2a}(t)+Z_{2d}(t)+Z_{2i}(t)>0 \big\}} 
\]
of the approximating \twoIone-branching process $((Z_{2a}(t),Z_{2d}(t),Z_{2i}(t)))_{t \geq 0}$ defined in Section~\ref{sec-2invasion}, which is initially coupled between the same two branching processes $(Z^{\eps,-}_{2a}(t),Z^{\eps,-}_{2d}(t),Z^{\eps,-}_{2i}(t))_{t\geq 0}$ and $(Z^{\eps,+}_{2a}(t),Z^{\eps,+}_{2d}(t),Z^{\eps,+}_{2i}(t))_{t\geq 0}$ as $(( N_{2a}(t), N_{2d}(t),N_{2i}(t)))_{t \geq 0}$, according to \eqref{couplingmultitypevirus}. 

Let $f^*$ be a function such that Proposition~\ref{prop-2firstphase} and~\ref{prop-2dynsyst} hold for $f^*/3$ (and hence also for $f^*$). Throughout the rest of the proof, we will assume that $\eps>0$ is so small that $f^*(\eps) <1$. Further, we fix $b\geq 2$ such that Proposition~\ref{prop-2firstphase} holds for $b$.

 We define the ``quick extinction event for type 2'' by %{\tt jetzt ein Ereignis statt W-keit, das scheint mir besser zu passen}
\[ 
\mathcal E(K,\eps):= \Big\{ \frac{T_0^2}{\log K} \leq f^*(\eps), T_0^2 < T_{\beta}, B=0 \Big\},
\]
%{\tt Hier und im weiteren wieder die Startbedingung angeben? }\bl 
where we recall the stopping times $T_0^2$ and $T_\beta$ from Section~\ref{sec-2invades1}. During the rest of the proof, we will assume  that $\mathbf N^K(0)=\mathbf M^*_K$ (which is assumed in Theorem~\ref{theorem-2invasion}), and we will omit this from the notation whenever this does not lead to confusion. \color{black}
We aim to show that  
\[ 
\liminf_{K \to \infty} \, \P \big(\mathcal E(K,\eps)\big) \geq s_{2a}-o_\eps(1). \numberthis\label{extinctionlowermvirus}
\]
\bl 
%holds for 
%\[ \mathcal E(K,\eps):= \P \Big( \frac{T_0^2}{\log K} \leq f^*(\eps), T_0^2 < T_{\beta}, B=0 %\Big),\]
%where we recall the stopping times $T_0^2$ and $T_\beta$ from Section~\ref{sec-2invades1}.
Further, we  define the ``invasion event for type 2'' by
\[ 
\mathcal I(K,\eps):=\Big\{ \Big| \frac{T_\beta  \wedge T_0^2}{\log K}- \frac{1}{\lambda^*}  \Big| \leq f^*(\eps), T_\beta < T_0^2, B=1 \Big\}. 
\]
for which we aim to show that \bl in case $s_{2a}<1$, 
\[ 
\liminf_{K \to \infty} \, \P\big(\mathcal I(K,\eps)\big) \geq 1-s_{2a}-o_\eps(1).\numberthis\label{survivallowermvirus}
\]

% want to show that in case $s_{2a}<1$, 
% \[ \liminf_{K \to \infty} \mathcal I(K,\eps) \geq 1-s_{2a}-o_\eps(1), \numberthis\label{survivallowermvirus}\]
% where we define
% \[ \mathcal I(K,\eps):=\P \Big( \Big| \frac{T_\beta  \wedge T_0^2}{\log K}- \frac{1}{\lambda^*}  \Big| \leq f^*(\eps), T_\beta < T_0^2, B=1 \Big). \]
Throughout the proof, $\beta>0$ is understood  to be \bl sufficiently small; later we will explain what conditions precisely it has to satisfy (in accordance with Proposition~\ref{prop-2dynsyst}). 
The assertions~\eqref{extinctionlowermvirus} and~\eqref{survivallowermvirus} together will imply Theorem~\ref{theorem-2invasion}.

Let us start with the case of extinction of the type 2 population in the first phase of the invasion and verify \eqref{extinctionlowermvirus}. Clearly, we have 
\[ 
\P \big(\mathcal E(K,\eps)\big) \bl \geq \P \Big( \frac{T_0^2}{\log K} \leq f^*(\eps), T_0^2 < T_\beta^2, B=0, T_0^2 < T_{\eps}^2 \wedge R_{b\eps} \Big).
\]
%where we recall the stopping times $T_0^2$ and $T_\eps^2$ from Section~\ref{sec-2invades1}.
Now, considering our initial conditions, one can choose $\beta>0$ sufficiently small (far enough away from $n_{1a}^*, n_{1i}^*$, and $n_3^*$) \bl such that for all sufficiently small $\eps>0$ we have
\[ 
T_{\eps}^2 \wedge R_{b\eps} < T_\beta, 
\]
almost surely. We now assume for the \bl whole section that $\beta$ satisfies this condition. Then,
\[ 
\P \big(\mathcal E(K,\eps)\big) \bl \geq \P \Big( \frac{T_0^2}{\log K} \leq f^*(\eps), B=0, T_0^2 < T_{\eps}^2 \wedge R_{b\eps} \Big). \numberthis\label{andisandmvirus} 
\]
Moreover, similar to the proof of Proposition~\ref{prop-2firstphase} (cf.\ the proof of \cite[Proposition 4.1]{BT19}), we obtain %{\gb \tt Steht das in BT20? \color{red} Prop.\ C.1 ist tatsächlich \cite{BT20}. Aber wir können \cite{BT19} zitieren. \bl }
\[ 
\limsup_{K \to \infty} \,  \P \big( \{ B=0 \} \, \Delta  \, \{ T_0^2 < T_{\eps}^2 \wedge R_{b\eps}  \}  \big)=o_\eps(1), 
\numberthis\label{undefinedsymmdiffmvirus} 
\]
where $\Delta$  stands for the \bl symmetric difference, and
\[ \limsup_{K \to \infty} \,  \P \big( \{ B=0 \} \,  \Delta \,  \{ T_0^{\eps,+} < \infty \}  \big)=o_\eps(1), \]
where 
\[ 
T_0^{\eps,+} = \inf \{ t \geq 0 \colon Z^{\eps,+}_{2a}(t)+Z^{\eps,+}_{2d}(t)+Z^{\eps,+}_{2i}(t) = 0 \} 
\]
is the extinction time of the dominating branching process $(Z^{\eps,+}_{2a}(t),Z^{\eps,+}_{2d}(t),Z^{\eps,+}_{2i}(t))_{t\geq 0}$.
%\color{red} Here I wrote $T_0^{(\eps,+),1} < \infty$. The homogamy paper says  $T_0^{(\eps,+),1} < T_\infty^{(\eps,+),1}$ several times, which I don't understand because the $(\eps,+)$-process a.s.~does not reach $\infty$ in finite time. \color{black}
Together with \eqref{andisandmvirus} and the coupling~\eqref{couplingmultitypevirus}, it follows that
\begin{align*}
\liminf_{K \to \infty} \P \big(\mathcal E(K,\eps)\big) \bl  &\geq \liminf_{K\to\infty} \P \Big( \frac{%\widetilde 
 T_0^2}{\log K} \leq f^*(\eps), B=0, T_0^2 \leq T_\eps^2 \wedge R_{b\eps} \Big) \\
 &\geq \liminf_{K\to\infty} \P \Big( \frac{%\widetilde 
 T_0^{\eps,+}}{\log K} \leq f^*(\eps), B=0, T_0^2 \leq T_\eps^2 \wedge R_{b\eps} \Big)
 \numberthis\label{secondlinemvirus} \\ &\geq  \liminf_{K\to\infty} \P \Big( \frac{
T_0^{\eps,+}}{\log K} \leq f^*(\eps), T_0^{\eps,+} < \infty \Big) + o_\eps(1).
\end{align*}
Thus, for $\diamond \in \{ +, - \}$, employing the chain of inequalities~\eqref{qineqmvirus},
%\[ 0 \leq \liminf_{\eps \downarrow 0} \big| s_{2a}^{(\eps,\diamond)}-s_{2a} \big| \leq \limsup _{\eps \downarrow 0} \big| s_{2a}^{(\eps,\diamond)}-s_{2a} \big| \leq \limsup_{\eps \downarrow 0} \big| s_{2a}^{(\eps,-)}-s_{2a}^{(\eps,+)} \big| = 0, \numberthis\label{qineqmvirus} \]
%which follows from Lemma~\ref{lemma-2residentsstay} and the coupling~\eqref{couplingmultitypevirus}, 
we obtain \eqref{extinctionlowermvirus}, which implies~\eqref{sublog}. %\color{red} In the homogamy paper there are some $\lambda^*$'s in front of $T_0^{(\eps,+),1}$, but they seem a bit dubious to me\color{black} 

Let us continue with the case of ``invasion of type 2'' \bl and verify \eqref{survivallowermvirus}. %The convergence in probability in \eqref{invasion} is equivalent to the assertion that
%\[ \liminf_{K \to \infty} \mathcal I(K,\eps) \geq 1-q-o(\eps) \]
%holds for
%\[ \mathcal I(K,\eps):=\P \Big( \Big| \frac{T_\beta \wedge T_0^2}{\log K}-\Big( \frac{1}{\widetilde \lambda} + \frac{1}{\mu-\lambda_1+\alpha \bar x_a} \Big)\Big| \leq f^*(\eps), T_\beta < T_0^2, B=1 \Big). \]
%To this end, \bl fix a constant $b \geq 2$ satisfying the condition of Lemma~\ref{lemma-2residentsstay}. 
Arguing analogously to \eqref{undefinedsymmdiffmvirus}, we obtain
\[ 
\limsup_{K \to \infty} \, \P \big( \{ B=1 \} \, \Delta \, \{ T_{\eps}^2  < T_0^2 \wedge R_{b\eps}  \}  \big)=o_\eps(1). 
\] 
Thus,
\begin{align*}
\liminf_{K \to \infty} \P \big(\mathcal I(K,\eps)\big) \bl  & = \liminf_{K \to \infty}\,  \P \Big( \Big| \frac{T_\beta}{\log K} - \frac{1}{\lambda^*}  \Big| \leq f^*(\eps), T_\beta<T_0^2,  
T^2_{\eps} < T^2_0 \wedge R_{b\eps} \Big) + o_\eps(1).
\begin{comment}{For $\eps>0,\beta>0$, we introduce the set \color{red} quantifier for $\delta$? This is also a problem in the homogamy paper (and in our papers) \color{black}
\[ \begin{aligned}
\mathfrak B^1_\eps &:= [\pi_{1d}-\delta,\pi_{1d}+\delta] \times [\pi_{1i}-\delta,\pi_{1i}+\delta] \times [\pi_{2}-\delta,\pi_{2}+\delta] \times [\eps/\widehat C,\eps] \times [\barn1a-b\eps,\barn1a+b\eps] \\
\end{aligned}
\]
and the stopping time
\[
\begin{aligned}
T'_\eps:=& \inf \Big\{ t \geq 0 \colon \Big( \frac{N_{1d,t}^K}{N_{1d,t}^K+N_{1i,t}^K+N_{2,t}^K}, \frac{N_{1i,t}^K}{N_{1d,t}^K+N_{1i,t}^K+N_{2,t}^K}, \frac{N_{2,t}^K}{N_{1d,t}^K+N_{1i,t}^K+N_{2,t}^K}, \\ & \qquad N_{1d,t}^K+N_{1i,t}^K+N_{2,t}^K, N_{1a,t}^K \Big)\in \mathfrak B^1_\eps \Big\}.
\end{aligned}
\]
Informally speaking, we want to show that with high probability the process has to pass through $\Bcal^1_\eps$ in order to reach $S_\beta$. Then, thanks to the Markov property, we can estimate $T_\beta$ by estimating $T'_\eps$ and $T_{S_{\beta}}-T'_\eps$. }\end{comment}
%This implies that %{\gb \tt Dier erste Ugl scheint eine reine Wiederholung zu sein? \color{red} Ja, stimmt! \bl }
\\ 
    %\liminf_{K \to \infty}\, \P \big(\mathcal I &(K,\eps)\big) \bl \\
    \geq  \liminf_{K\to\infty} \ & \P \Big( \Big| \frac{T_\eps^2}{\log K} -\frac{1}{\lambda^*} \Big| \leq \frac{f^*(\eps)}{2}, \Big| \frac{T_\beta -T^2_\eps}{\log K} \Big| \leq \frac{f^*(\eps)}{2},  T_\beta<T_0^2 , T^2_{\eps} < T^2_0 \wedge R_{b\eps} \Big) + o_\eps(1).
    \numberthis\label{beforesetsmviruscoex}
\end{align*}
%Note that for $\beta>0$ sufficiently small and $\eps>0$ sufficiently small chosen accordingly, $R_{b\eps} \leq T_\beta$ holds almost surely. %, and additionally, if $T_\eps^2 < R_{b\eps}$, then $T_\beta > T_\eps^2$. 
%\sout{We assume during the rest of the proof that $\beta$ satisfies this condition. {\tt \gb Das scheint eine Wiederholung zu sein, haben wir oben schon gesagt. \bl }} 
Hence, recalling that $R_{b\eps} \wedge T_\eps^2 \leq T_\beta$,  \color{black}
%\[ M_\eps=\big\{ \widehat{\mathbf n}=(\widehat n_{1a},\widehat n_{1i},\widehat n_{2a},\widehat n_{2d},\widehat n_{2i},\widehat n_3) \in [0,\infty)^6 \colon |\widehat n_{\tau}-n_\tau^*|\leq b\eps, \forall \tau \in \{ 1a,1i,3\}, \widehat n_{2a}+\widehat n_{2d}+\widehat n_{2i}=\eps \big\} \]
%and 
for $K>0$, defining
\[ M_\eps^K:=\big\{ \widehat{\mathbf n} \in [0,\infty)^6 \colon |\widehat n_{\tau}-n_\tau^*|\leq b\eps, \forall \tau \in \{ 1a,1i,3\}, \widehat n_{2a}+\widehat n_{2d}+\widehat n_{2i}=\smfrac{\lfloor \eps K \rfloor}{K} \big\}, \]
the strong Markov property applied at time $T_\eps^2$ implies 
\[ \begin{aligned}
 \liminf_{K \to \infty} \, \P \big(\mathcal I(K,\eps)\big) \bl &  \geq \liminf_{K \to \infty} \Big[ \P \Big( \Big| \frac{T^2_\eps}{\log K} -\frac{1}{\lambda^*} \Big| \leq \frac{f^*(\eps)}{2}, T^2_{\eps} < T^2_0 \wedge R_{b\eps}% \Big|  \mathbf N^K(0) \bl  = \mathbf M^*_K 
  \Big) \\
    &  \qquad \qquad \qquad \times \inf_{\begin{smallmatrix} \widehat{\mathbf n} \in M_\eps^K \end{smallmatrix}} \P \Big(  \Big| \frac{T_\beta}{\log K} \, \Big| \, \leq \frac{f^*(\eps)}{2},  T_\beta < T_0^2  \Big| \mathbf N^K(0)= \widehat{\mathbf n}  \Big) \Big].\\
   % & \qquad \times \inf_{\mathbf n \in \mathfrak B^2_\beta} \P \Big(\Big| \frac{T_{S_{\beta}^1}-T_\beta }{\log K}- \frac{1}{\widehat \lambda}\Big| \leq \frac{f^*(\eps)}{3}, T_\beta < T_0^1  \Big| \mathbf N_0^K = \mathbf n\Big) \Big]+o_{\eps}(1). 
   \end{aligned} \numberthis\label{productformcoexmvirus} \]
%{\gb \tt Oben die Anfangsbedingungen explizit machen? \color{red} Ja, gute Idee! \bl }

%{\gb \tt Hier könnte man den 6-dim Vektor an geeigneten Stellen durch $\mathbf{\hat n}$ ersetzen! \bl } 

It remains to show that the right-hand side of \eqref{productformcoexmvirus} is close to $1-s_{2a}$ as $K \to \infty$ if $\eps$ is small. The fact  that \bl the limes inferior of the first factor on the right-hand side of \eqref{productformcoexmvirus} is at least $1-s_{2a}-o_\eps(1)$ follows analogously to \eqref{beforesetsmviruscoex} (since Propositions~\ref{prop-2firstphase} and~\ref{prop-2dynsyst} hold not only for $f^*$ but also for $f^*/2$). The fact that the limes inferior of the second factor on the right-hand side of~\eqref{productformcoexmvirus} is at least $1-o_\eps(1)$ is a direct consequence of Proposition~\ref{prop-2dynsyst}. 
Hence, we have obtained
\[ 
\liminf_{K \to \infty} \,  \P \big(\mathcal I(K,\eps)\big) \bl \geq 1-s_{2a}-o_\eps(1), 
\]
which implies~\eqref{2invasion} and~\eqref{lambda*conv}. It was already  shown \bl in Section~\ref{sec-2invasion} that if~\eqref{viruscoexcondq=0} holds and 
$$
\lambda_2-\lambda_1 \neq \frac{qDn_3^*(r\kappa\mu_1-v\sigma)}{(r+v)(\kappa\mu_1+\sigma)},
$$ 
then $s_{2a}=1$ is equivalent to~\eqref{2doesn'tinvade1} and $s_{2a}<1$ to~\eqref{2invades1}, and thus the proof of the theorem is finished.
\end{proof}

\section{Invasion regimes for $r\kappa\mu_1>v\sigma$}\label{appendix-regimes}
For $r\kappa\mu_1 > v \sigma$, dormancy has no advantage because type 2 can never invade if $\lambda_2<\lambda_1$ (which was already the case in absence of dormancy), and for $\lambda_2>\lambda_1$ it is also possible that type 1 achieves founder control or even fixates. This can be observed in Figure~\ref{figure-regimesnocoex}, which is the analogue of Figure~\ref{figure-regimes} in the case when $r\kappa\mu_1>v\sigma$. Here, the two regimes of stable coexistence get replaced by two regimes of founder control (cf.\ Section~\ref{sec-lambda2lambda1}), which are the only regimes where $\mathbf x$ is coordinatewise positive (and presumably unstable). In the dark green regime of founder control, each host type coexists with the virus in absence of the other host type, and the asymptotic probability of a successful invasion of either host type is 0. In the light green regime we also have founder control, but there only type 1 coexists with type 3 in absence of the other host. % and hence the stable state corresponding to the presence of type 2 but not of type 1 is $(0,0,\bar n_{2a},0,0,0)$. 

\begin{figure}
    \centering
    \includegraphics[width=0.5\linewidth]{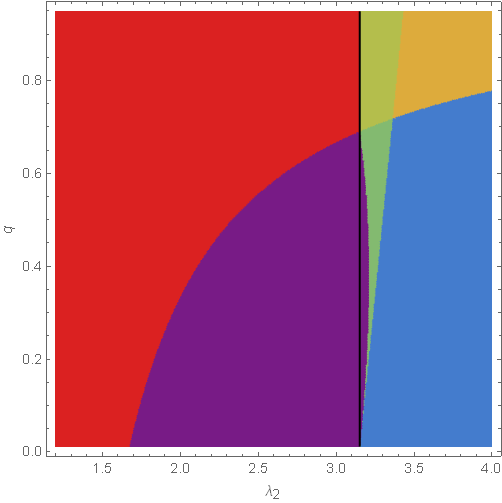}
    \caption{Here, the parameters are the same as in Figure~\ref{figure-regimes}, apart from $r$, which is increased to $3$, and $\kappa$, which is increased to $1$, so that $r\kappa\mu_1 > v \sigma$. Compared to Figure~\ref{figure-regimes}, only the following colours have a new meaning. $\color{lightgreen}\rule{.3cm}{.3cm}$ Light green (top mid/right): founder control (type 2 is not able to coexist with type 3 in absence of type 1)\color{black}, $\color{darkgreen}\rule{.3cm}{.3cm}$ dark green (bottom mid/right):  founder control (type 2 is able to coexist with type 3). The meaning of the following colours is unchanged.
    $\color{red1}\rule{.3cm}{.3cm}$ Red (left): fixation of type 1 (coex.\ with 3), $\color{orange1}\rule{.3cm}{.3cm}$ orange (top right): fixation of type 2a (without 3), $\color{purple1}\rule{.3cm}{.3cm}$ purple (bottom mid): fixation of type 1 (and 3), $\color{blue1}\rule{.3cm}{.3cm}$ blue (bottom right): fixation of type 2 (and 3). 
    \color{black}  The dark green regime reaches the black line $\lambda_2=\lambda_1$ at the $\lambda_2$ axis with a vanishing width.
    }
    \label{figure-regimesnocoex}
\end{figure}

For $\lambda_2=3.2>\lambda_1$ and $q=0.6$ (belonging to the dark green regime in Figure~\ref{figure-regimesnocoex}) and for $\lambda_2=3.2$ and $q=0.8$ (light green regime) 
we checked numerically that $\mathbf x$ is indeed unstable. In both cases, the Jacobi matrix of~\eqref{6dimvirus} at $\mathbf x$ has a pair of complex conjugate eigenvalues with negative real parts, while the remaining four eigenvalues are real, three negative and one positive. In the regimes of founder control, a conjecture analogoue to parts (C)--(D) of Conjecture~\ref{conj-fixation} would be difficult to formulate. Indeed, simulations of solutions to~\eqref{6dimvirus} indicate that here, both the domain of attraction of $(n_{1a}^*,n_{1i}^*,0,0,0,n_3^*)$ and the one of $(0,0,\wt n_{2a},\wt n_{2d},\wt n_{2i},\wt n_3)$ have a nonempty interior. We have no clear idea what the separatrix between these two domains of attraction looks like. \bl

\subsection*{Acknowledgements.} Funding acknowledgements by the second author: This paper was supported by the János Bolyai Research Scholarship of the Hungarian Academy of Sciences. Project no.\ STARTING 149835 has been implemented with the support provided by the Ministry of Culture and Innovation of Hungary from the National Research, Development and Innovation Fund, financed under the STARTING\_24 funding scheme.  The second author also wishes to thank Goethe University Frankfurt for hospitality.  \bl

%\subsection*{Acknowledgements} Funding acknowledgement by the second author: This paper was supported by the János Bolyai Research Scholarship of the Hungarian Academy of Sciences. Project no.\ STARTING 149835 has been implemented with the support provided by the Ministry of Culture and Innovation of Hungary from the National Research, Development and Innovation Fund, financed under the STARTING\_24 funding scheme.  The second author also wishes to thank Goethe University Frankfurt for hospitality.  \bl

\iffalse
\subsection*{Statements and Declarations.} The source code for the simulations is available under the following links: \verb{http://cs.bme.hu/~tobias/virus1.nb{ and \verb{http://cs.bme.hu/~tobias/virus2.nb{. The authors have no relevant financial or non-financial interests to disclose.
The authors have no competing interests to declare that are relevant to the content of this article.
All authors certify that they have no affiliations with or involvement in any organization or entity with any financial interest or non-financial interest in the subject matter or materials discussed in this manuscript.
The authors have no financial or proprietary interests in any material discussed in this article.
\fi

\section{Hopf bifurcations in the three- and the four-dimensional dynamical system}\label{sec-Hopfwiederholung} 
Fixing all parameters of the three-dimensional system~\eqref{3dimvirus} but $m$ and considering $m$ as a bifurcation parameter, let $m^*$ denote the (unique) \color{black} value of $m$ such that $n_{1a}^*=\bar n_{1a}$. Then, at $m^*$ the system undergoes a transcritical bifurcation, i.e.\ here we have formally $(\bar n_{1a},0,0)=(n_{1a}^*,n_{1i}^*,n_3^*)$,  and \bl at this point $(\bar n_{1a},0,0)$ loses its stability, while $(n_{1a}^*,n_{1i}^*,n_3^*)$ becomes coordinatewise positive and, at least initially (for $m>m^*$ close to $m^*$), asymptotically stable.

As shown in~\cite{BK98}, when $r=0$, the coordinatewise positive equilibrium $(n_{1a}^*,n_{1i}^*,n_3^*)$ of~\eqref{3dimvirus} loses its stability via a supercritical Hopf bifurcation for some $m^{**}>m^*$. For $m>m^{**}$, this equilibrium is unstable and the asymptotic behaviour of the system~\eqref{3dimvirus} started from coordinatewise positive equilibria is periodic. By continuity, the same holds when $r$ is small compared to $v$. This loss of stability and emergence of cyclic behaviour is reminiscent of the \emph{paradox of enrichment} known from predator--prey systems (first observed by Rosenzweig \cite{R71}, see also the related discussion in \cite[Section 3.3]{BT21} and the references therein). On the other hand, we showed in \cite{BT21} that $(n_{1a}^*,n_{1i}^*,n_3^*)$ is also asymptotically stable for all sufficiently large $m$ if $r>v$, which indicates that there should be no Hopf bifurcation in this case and the equilibrium should be stable for all $m>m^*$. 

By continuity, for $q$ sufficiently small, if $r$ is small enough compared to $v$ (for example if $r=0$), the equilibrium $(\wt n_{2a},\wt n_{2d}, \wt n_{2i}, \wt n_3)$ of~\eqref{4dimvirus} may also lose its stability for large $m$ via a supercritical Hopf bifurcation. Simulation results of~\cite[Section 3.2]{BT21} however indicate that for all $q<1$ sufficiently close to 1, $(\wt n_{2a}, \wt n_{2d}, \wt n_{2i}, \wt n_{3})$ stays asymptotically stable for all values of $m$ above the corresponding transcritical bifurcation point.

Numerical results suggest (cf.\ \cite[Sections 3.1 and 3.2]{BT21}) that for $m>m^*$ very close to $m^*$, the convergence of~\eqref{3dimvirus} and~\eqref{4dimvirus} to the corresponding coordinatewise positive equilibrium is eventually coordinatewise monotone, and for somewhat larger values of $m$ the convergence becomes oscillatory and stays like that until the Hopf bifurcation point (which may be infinite). 

We note that if we choose the parameters but $\lambda_2,q,m$ as in Figure~\ref{figure-regimes} and   in \bl the corresponding simulations, then for the three-dimensional system~\eqref{3dimvirus} we are in the regime of oscillatory convergence, and the same holds for the four-dimensional system~\eqref{4dimvirus} for relatively large values of $\lambda_2$ and relatively small values of $q$, but close to the regime where there is no coexistence between types 2 and 3, the convergence becomes monotone again. Fixing these parameters, we expect no Hopf bifurcation in~\eqref{3dimvirus} because $r$ is relatively large compared to $v$. Still, for choices of parameters where the Hopf bifurcation is present in~\eqref{3dimvirus} (and thus also in~\eqref{4dimvirus} if $\lambda_2$ is close to $\lambda_1$ and $q$ to $0$), for moderately large values of $m$ (within the range of oscillatory convergence for~\eqref{3dimvirus}) we obtain a qualitatively rather similar figure.

%\bibliography{virus.bib}

\end{document}